\def\now{%
\number\hour:%
  \ifnum \minute<10 0\fi%
  \number\minute%
}
\newtheorem{theorem}{Theorem}[section]
\newtheorem{lemma}[theorem]{Lemma}
\newtheorem{corollary}[theorem]{Corollary}
\theoremstyle{definition}
\newtheorem{definition}[theorem]{Definition}
\newtheorem{cnj}[theorem]{Conjecture}
\newtheorem{example}[theorem]{Example}
\theoremstyle{remark}
\newtheorem{remark}[theorem]{Remark}
\newcommand{\Gammabar}{\bar{\Gamma}}
\newcommand{\mat}[1]{\begin{bmatrix} #1 \end{bmatrix}}
\newcommand{\hp}{\mathfrak{h}}
\newcommand{\fp}{\mathfrak{p}}
\newcommand{\fP}{\mathfrak{P}}
\DeclarePairedDelimiter{\pair}{\langle}{\rangle}
\DeclarePairedDelimiterX\set[1]{\lbrace}{\rbrace}{\def\given{\;\delimsize\vert\;}#1}
\DeclarePairedDelimiter{\gen}{\langle}{\rangle}
\DeclarePairedDelimiter{\size}{\lvert}{\rvert}
\newcommand{\cRR}{\mathcal{R}}
\newcommand{\Z}{\mathbb{Z}}
\newcommand{\I}{\mathbb{I}}
\newcommand{\Q}{\mathbb{Q}}
\newcommand{\R}{\mathbb{R}}
\newcommand{\C}{\mathbb{C}}
\newcommand{\F}{\mathbb{F}}
\newcommand{\PP}{\mathbb{P}}
\newcommand{\cC}{\mathcal{C}}
\newcommand{\DD}{\mathcal{D}}
\newcommand{\cB}{\mathcal{B}}
\newcommand{\inv}{^{-1}}
\def\O{{\mathcal O}}
\DeclareMathOperator{\Ker}{Ker}
\newcommand{\cusp}{\mathrm{cusp}}
\DeclareMathOperator{\GL}{GL}
\DeclareMathOperator{\SL}{SL}
\DeclareMathOperator{\diag}{diag}
\DeclareMathOperator{\image}{Im}
\DeclareMathOperator{\Gal}{Gal}
\DeclareMathOperator{\tr}{Tr}
\DeclareMathOperator{\St}{St}
\DeclareMathOperator{\ord}{ord}
\DeclareMathOperator{\id}{id}
\begin{document}
\title{Steinberg homology, modular forms, and real quadratic fields}
\author{Avner Ash}
\address{Boston College, Chestnut Hill, MA  02467}
\email{Avner.Ash@bc.edu}
\author{Dan Yasaki}
\address{UNCG, Greensboro, NC  27412}
\email{d\_yasaki@uncg.edu}
\date{\today~\now}
\keywords{arithmetic homology, Steinberg representation, real quadratic field, general linear group, arithmetic group, modular form}
\subjclass[2010]{Primary 20J06; Secondary 11F67, 11F75}

\begin{abstract}
We compare the homology of a congruence subgroup $\Gamma$ of $\GL_2(\Z)$ with coefficients in the Steinberg modules over $\Q$ and over $E$, where $E$ is a real quadratic field. If $R$ is any commutative base ring,  the last connecting homomorphism $\psi_{\Gamma,E}$ in the long exact sequence of homology stemming from this comparison has image in $H_0(\Gamma, \St(\Q^2;R))$ generated by classes $z_\beta$ indexed by 
$\beta\in E\setminus\Q$.  We investigate this image.

When $R=\C$, $H_0(\Gamma, \St(\Q^2;\C))$ is isomorphic to a space of classical modular forms of weight 2, and the image lies inside the cuspidal part.  In this case, $z_\beta$ is closely related to periods of modular forms over the geodesic in the upper half plane from $\beta$ to its conjugate  $\beta'$.  
Assuming GRH we prove that the image of $\psi_{\Gamma,E}$ equals the entire cuspidal part.

When $R=\Z$, we have an integral version of the situation.  
We define the cuspidal part of the Steinberg homology,
 $H_0^\cusp(\Gamma, \St(\Q^2;\Z))$.
 Assuming GRH we prove that for any congruence subgroup, $\psi_{\Gamma,E}$ always has finite index in 
 $H_0^\cusp(\Gamma, \St(\Q^2;\Z))$,
 and if $\Gamma=\Gamma_1(N)^\pm$ or $\Gamma_1(N)$,
then the image is all of
 $H_0^\cusp(\Gamma, \St(\Q^2;\Z))$.    
If $\Gamma=\Gamma_0(N)^\pm$ or $\Gamma_0(N)$,
we prove (still assuming GRH) 
an upper bound for the size of $H_0^\cusp(\Gamma, \St(\Q^2;\Z))/\image(\psi_{\Gamma,E})$.
We conjecture that the results in this paragraph are true unconditionally.

We also report on  extensive computations of the image of  $\psi_{\Gamma,E}$  that we made for 
$\Gamma=\Gamma_0(N)^\pm$ and
$\Gamma=\Gamma_0(N)$.  
Based on these computations, we believe that the image of $\psi_{\Gamma,E}$ is not all of $H_0^\cusp(\Gamma, \St(\Q^2;\Z))$ for these groups, for general $N$.  
 \end{abstract}
\maketitle

\section{Introduction}\label{intro}

In this paper, we explore a homological version of the classical concept of a toral period for a modular cusp form. 
Let $E$ be a real quadratic field and $\Gamma$ a 
congruence subgroup of $\GL_2(\Z)$, which acts on $E$ via fractional linear transformations.   Given $\beta\in E\setminus \Q$, the stabilizer $\Gamma_\beta$ modulo $\pm I$ is a cyclic group.
Let $\gamma_\beta\in\Gamma_\beta$ be a generator (modulo $\pm I$).   If $f(z)$ is a holomorphic 
modular form of weight 2 for $\Gamma$, the ``$\beta$-toral period'' of $f(z)$ is the integral
\[
\int_\tau^{\gamma_\beta \tau}  f(z) \,dz,
\]
where $\tau$ is any point in the upper half plane.  It is independent of the choice of $\tau$. 

We define a homological version of these periods over any commutative ground ring $R$, in terms of the homology of $\Gamma$ with coefficients in the Steinberg module.  It is this version that is the main object of the computations and theorems of this paper.  When $R=\C$ we prove that under the Generalized Riemann Hypothesis (GRH) the $\beta$-toral cycles generate the relevant homology group -- see  Section~\ref{complex}.  This result does not seem to be known unconditionally, nor have we seen it conjectured in the literature.
(We use the term ``GRH'' to mean the generalized Riemann hypothesis for those number fields
needed in Lenstra \cite[Theorem 3.1]{L}.) 

For general $R$,  we prove a number of theorems, detailed below, about the group generated by these cycles in homology.  We also performed extensive computations for particular groups when $R=\Z$, which are reported upon in the last portion of this paper.

Given any field $K$, let $\St(K^2;R)$ denote the Steinberg module for the vector space $K^2$ with coefficients in a ring $R$.  (In Section~\ref{mod} we review the definition and basic facts about Steinberg modules.)
  When $K=\Q$, the Steinberg module is isomorphic to the module of modular symbols 
  $[v,w]$, where $v,w$ are points in the projective line over $\Q$.  

Following~\cite{A}, we have a long exact sequence of $\GL_2(\Q)$-modules
  \begin{equation}\label{seq}
0\to \St(\Q^2;R)\to \St(E^2;R) \to C\to 0.
\end{equation}
In \cite{A} it is proven that as an $R\GL_2(\Q)$-module, $C$ is isomorphic to a direct sum of free $R$-modules of rank $1$ indexed by the elements of
$\beta\in E \setminus \Q$, where $\GL_2(\Q)$ acts via fractional linear
transformations on $E \setminus \Q$.

This short exact sequence gives rise to a long exact sequence of the homology groups
of $\Gamma$ 
with coefficients in $\St(E^2;R)$, $\St(\Q^2;R)$ and $C$. 
The main object of this paper is the connecting homomorphism $H_1(\Gamma,C)\to H_0(\Gamma, \St(\Q^2;R))$.  It is more convenient to work with its negative: 
\begin{definition}\label{psidef}
For any subgroup $\Gamma\subset\GL_2(\Q)$, let $\psi_{\Gamma,E}=-\partial$, where 
$\partial\colon H_1(\Gamma, C)\to H_0(\Gamma, \St(\Q^2;R))$ is the connecting homomorphism described above.  When the field $E$ is understood, we will just write $\psi_{\Gamma}$.
\end{definition}
We derive an explicit formula for $\psi_{\Gamma,E}$ in terms of modular symbols (Theorem~\ref{psiform}).  We define the ``cuspidal'' part of the target, 
$H_0^\cusp(\Gamma,  \St(\Q^2;R))$ and prove that the image of $\psi_{\Gamma,E}$ always lies in the cuspidal part.
On the basis of our computational evidence for $R=\Z$, presented in section~\ref{results},  we claim it is very unlikely that the image of
$\psi_{\Gamma,E}$ is the whole of the cuspidal part in general.  The question becomes to determine the image.
 
The study of the image of $\psi_{\Gamma,E}$ naturally leads us to consider
certain subgroups $\widetilde K(\Gamma,E) \subset  K(\Gamma,E)
\subset \Gamma$.  The first is the group generated by all the $\Gamma_\beta$'s, as $\beta$ ranges over $E\setminus \Q$, and the second is generated by $\widetilde K(\Gamma,E)$ together with
all the upper and lower triangular matrices in $\Gamma$.   

The connection to modular forms arises as follows:  If $6$ is invertible in $R$ and $\Gamma$ is an arithmetic group, then
 $H_0(\Gamma, \St(\Q^2;R))$ is isomorphic
by Borel-Serre duality to $H^1(\Gamma,R)$.
If $R=\C$,  $H^1(\Gamma,\C)$ is isomorphic by the Eichler-Shimura theorem to a space of modular forms of weight two for $\Gamma$.  
 
 The rest of this introduction summarizes in further detail the contents of the paper.  Define the following congruence subgroups.
 
  \begin{definition}\label{N1}  Let $N$ be a positive integer.
    
 \begin{itemize}
 \item $\Gamma_1(N)^\pm$ is the subgroup of $\GL_2(\Z)$
consisting of matrices congruent to 
\[
\mat{
\pm1&*\\
0&*
} \pmod N.
\]  

\item $\Gamma_1(N)$ is the subgroup of $\SL_2(\Z)$
consisting of matrices congruent to 
\[
\mat{
1&*\\
0&1
} \pmod N.
\]  

\item $\Gamma_0(N)^\pm$ is the subgroup of $\GL_2(\Z)$
consisting of matrices congruent to 
\[
\mat{
*&*\\
0&*
} \pmod N.
\] 

\item $\Gamma_0(N)=\Gamma_0(N)^\pm\cap \SL_2(\Z)$. 
 \end{itemize}
\end{definition}

We performed computations to find the image of $\psi_{\Gamma,E}$ (when $R=\Z$) for 
$\Gamma$ equal to 
$\Gamma_0(N)^\pm$ and $\Gamma_0(N)$ for $N\le1000$ and $E=\Q[\sqrt\Delta]$ with 
$\Delta\le50$.
We did not compute for $\Gamma$ equal to 
$\Gamma_1(N)^\pm$ and $\Gamma_1(N)$ because of the larger index of these groups in $\GL_2(\Z)$. Moreover, our theory shows that such computations would not be interesting, because for these groups we can prove (given GRH) that 
$\psi_{\Gamma,E}$ maps onto $H_0^\cusp(\Gamma,  \St(\Q^2;R))$.

It should be noted that our computations are not definitive when they do not show that 
$\psi_{\Gamma,E}$ maps onto $H_0^\cusp(\Gamma,  \St(\Q^2;R))$, because there is the possibility that computation with additional $\beta$'s would discover more elements in the image of 
$\psi_{\Gamma,E}$.  We were unable to find an effective bound on the heights of $\beta$ that would allow us to terminate the computations with perfect confidence.
However, the computations usually stabilized quite rapidly as more $\beta$'s were processed.   These computations helped us formulate our main theorems.
       
Section~\ref{mod} gives basic facts about the Steinberg module and about modular symbols.  Section~\ref{conn} derives a formula for $\psi_{\Gamma,E}$ in terms of modular symbols.   Section~\ref{stab}  determines the stabilizers $\Gamma_\beta$,
and Section~\ref{un} gives detailed information about its elements.

 Definition~\ref{defcusp} defines the 
``cuspidal submodule'' 
$H_0^{\cusp}(\Gamma, \St(\Q^2;R))\subset 
H_0(\Gamma, \St(\Q^2;R))$.  This definition is consistent with the classical definition of cusp forms if $R=\C$.
We prove (Theorem~\ref{thmcusp}) that
the image of $\psi_{\Gamma,E}$ lies in
$H_0^{\cusp}(\Gamma, \St(\Q^2;R))$.
In Section~\ref{seccusp}, we collect a number of useful results about 
$H_0(\Gamma, \St(\Q^2;R))$, $H_0^{\cusp}(\Gamma, \St(\Q^2;R))$, and modular symbols.

Our calculations for $R=\Z$ suggest that the image of $\psi_{\Gamma,E}$ always has finite index in $H_0^{\cusp}(\Gamma, \St(\Q^2;\Z))$.  We prove Theorem~\ref{maingeneral}, which says that (assuming GRH)
if $\Gamma$ is a subgroup of 
$\GL_2(\Q)$ that contains 
some principle congruence subgroup, then $H_0^{\cusp}(\Gamma, \St(\Q^2;R))$ modulo the image of
$\psi_{\Gamma,E}$ is a finitely-generated torsion $R$-module. 

When $\Gamma$ is one of the congruence subgroups defined above we have more precise results, again assuming GRH:
\begin{itemize}
\item For $\Gamma=\Gamma_1(N)^\pm$ or $\Gamma_1(N)$, 
$\psi_{\Gamma,E}$ is surjective. 

\item  For
$\Gamma=\Gamma_0(N)^\pm$, define the subgroup  $A_E(N)$ (Definition~\ref{AE}) 
of $((\Z/N\Z)^\times/\set{\pm1})$.  Then there is a surjective map $\pi$ from 
 $((\Z/N\Z)^\times/\set{\pm1})/A_E(N)$ onto
the cokernel of $\psi_{\Gamma,E}$.

\item For
$\Gamma=\Gamma_0(N)$  define the subgroup  $A_E(N)^*$ (Definition~\ref{AE}) 
of $(\Z/N\Z)^\times$.  Then there is a surjective map $\pi^*$ from 
 $(\Z/N\Z)^\times/A_E(N)^*$ onto
the cokernel of $\psi_{\Gamma,E}$.
\end{itemize}

An essential ingredient in our work is a beautiful theorem of Lenstra's, which is the source of our need to assume GRH.  Our application of Lenstra's theorem is made in Section~\ref{lenstrasection}.
Then we prove the results in the bullets in Sections~\ref{im1}, \ref{imgeneral} and \ref{im0}.   

  The key to studying the image of $\psi_{\Gamma,E}$ and proving the bulleted assertions above is the group 
 $K(\Gamma,E)$, defined earlier in this introduction.  
We prove Lemma~\ref{Kgen} which asserts that for any $\Gamma$, 
the image of $\psi_{\Gamma,E}$ is the $R$-span of $\Psi(g):=[e,ge]_\Gamma$ as $g$ runs through $K(\Gamma,E)$ and $e=(1:0)$.
Therefore, the problem of finding the image of $\psi_{\Gamma,E}$ separates into (1) determining $K(\Gamma,E)$ and (2) studying the map $\Psi\colon \Gamma\to H_0^{\cusp}(\Gamma,\St(\Q^2;R))$.  A central result about $\Psi$ is given in Theorem~\ref{Psi*}.

As for $K(\Gamma,E)$, in Theorem~\ref{maintheorem} we show using Dirichlet's theorem on primes in an arithmetic progression and  Lenstra's theorem that (assuming GRH) for any $N$ and any $E$ that $K(\Gamma_1(N)^\pm,E)=\Gamma_1(N)^\pm$
and $K(\Gamma_1(N),E)=\Gamma_1(N)$.   
These group theoretic assertions are of independent interest, and it would be nice to obtain unconditional proofs of them.

Perhaps a more natural group to study would be 
$\widetilde K(\Gamma,E)$, also defined above.  
Our proof of Theorem~\ref{maintheorem} requires a strong use of the triangular matrices and we have been unable to prove anything significant about $\widetilde K(\Gamma,E)$. Nor do we know of any general results in the literature about either $\widetilde K(\Gamma,E)$ or $K(\Gamma,E)$ .  

In Section~\ref{complex} we discuss the case $R=\C$.  We indicate how the theory of toral periods of cuspforms possibly might be employed to prove some of our theorems unconditionally for $R=\C$, but we point out that in the current state of the field, not enough is known in detail about the formulas for the periods nor about non-vanishing of $L$-functions for this approach to bear fruit at present.
Assuming GRH, we do show in Theorem~\ref{toralgen}  that the $E$-toral cycles generate the homology of the compact modular curve.  We do not know of any proof of this fact in the literature.

We describe and justify our computational methods in Section~\ref{what}.  In particular, we give an isomorphism between the cuspidal Voronoi homology of the upper half plane modulo $\Gamma$ and $H_0^{\cusp}(\Gamma, \St(\Q^2;\Z))$.  This is probably known to the experts, but we could not find it stated in the literature.

We summarize our computational results in Section~\ref{results}.  We computed for levels $N\le1000$ and $E=\Q[\sqrt \Delta]$ for $\Delta\le 50$.  Based on these results, we
conjecture in Section~\ref{conj} that the bulleted assertions above are true unconditionally.   We give details about the computation, including how the $\beta$'s are selected and how long we spend computing each image.

We find in our computations that the torsion in $H_0^{\cusp}$ is always either trivial, or isomorphic to  $\Z/3\Z$ or 
$(\Z/3\Z)^2$ for $\Gamma_0^\pm(N)$.  For $\Gamma_0^\pm(N)$, the torsion in $H_0^{\cusp}$ is always either trivial, or isomorphic to  $\Z/3\Z$ or 
$(\Z/3\Z)^3$.  Perhaps this could be proven by careful study of the long exact sequence \eqref{eq:free-6} in the proof of Theorem~\ref{free-6}.

Thanks to B.~Gross, A.~Popa, D.~Rohrlich, G.~Stevens and A.~Venkatesh for helpful suggestions concerning the material in Section~\ref{complex}.  Special thanks to K. Conrad for telling us about Lenstra's paper.  Thanks also to  R. Gross, P.~Gunnells and D. Kelmer for helpful comments.

\section{ Preliminaries on the Steinberg module and Steinberg homology}\label{mod}

For more information on the Steinberg module, see the introduction to~\cite{APS} and its references.

Let $K$ be a field, $R$ a ring, and $n\ge2$ an integer.  Let $K^n$ be the vector space of column vectors.  By definition, the Steinberg module 
$\St(K^n;R)$ is the  reduced homology of the Tits building:
\[
\St(K^n;R)=\widetilde H_{n-2}(T(K^n),R).
\]
The Tits building $T(K^n)$ is the simplicial complex with one vertex for each subvector space $V\subset K^n$ with $0\ne V\ne K^n$, where the vertices $V_1,\dots,V_k$ span a simplex if and only if they can be arranged into a flag. 
The Steinberg module is a left-module for the group ring $R\GL_n(K)$.  
When $R=\Z$ we sometimes write $\St(K^n)$ instead of $\St(K^n;\Z)$. The Steinberg module 
$\St(K^n)$ is a free $\Z$-module and $\St(K^n;R)=\St(K^n)\otimes_\Z R$.

\begin{definition}
Let $v,w\in \PP^1(K)$.
The modular symbol $[v,w]$ denotes the element in $\St(K^2;R)$ which is the
  fundamental class of the $0$-sphere which has vertices $v$ and $w$, oriented so that its boundary is $w-v$.
\end{definition}

The action of an element $g\in \GL_2(K)$ on the 
symbol $[v,w]$ for $v,w\in K^2$ is given by
\[
g[v,w]=[gv,gw],
\]
where $g$ acts on the projective line by linear fractional transformations.

We recall some standard facts about modular symbols and the Steinberg module.
The first two parts of the following theorem follow from \cite[Theorem~5]{AGM2}.
Part \ref{it:unipotent} follows  easily from \ref{it:gens} and \ref{it:relations}(b).
Compare also Cremona \cite[Proposition~2.14]{cremona} for $K = \Q$ and \cite[Proposition~5]{gunnells} for $K$ a number field.
\begin{theorem}\label{thm:mod-props} 
Let $K$ be any field.
\begin{enumerate}

\item \label{it:gens} As abelian group, $\St(K^2)$ is generated by $[v,w]$ as $v,w$ range over
  all elements of $\PP^1(K)$.

\item  \label{it:relations} The following relations hold:
\begin{enumerate}
\item $[v,w]=-[w,v]$ and in particular $[v,v]=0$ for all $v,w\in \PP^1(K)$;
\item $[v,w]=[v,x]+[x,w]$ for all $v,w,x\in \PP^1(K)$;
\end{enumerate}

\item\label{it:unipotent} Fix any element $y\in\PP^1(K)$.  Then $\St(K^2)$ has as a 
free $\Z$-basis the symbols $[y,v]$ where $v$ runs over all 
 $v\ne y \in \PP^1(K)$.  
\end{enumerate} 
\end{theorem}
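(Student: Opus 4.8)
The plan is to establish all three parts from the definition of $\St(K^2;R)$ as the reduced homology $\widetilde H_0(T(K^2),\Z)$ of the Tits building, together with the simple combinatorial structure of $T(K^2)$. First I would observe that for $n=2$ the Tits building $T(K^2)$ is $0$-dimensional: the only proper nonzero subspaces of $K^2$ are the lines, so the vertices of $T(K^2)$ are exactly the points of $\PP^1(K)$, and there are no higher simplices (two distinct lines cannot be arranged into a flag). Hence $\St(K^2) = \widetilde H_0(T(K^2),\Z) = \widetilde C_0(T(K^2),\Z)/\partial C_1 = \widetilde C_0$ is just the augmentation kernel inside the free abelian group on $\PP^1(K)$; concretely, $\St(K^2)$ is the free $\Z$-module on formal differences $w - v$ of points of $\PP^1(K)$, with $[v,w]$ corresponding to $w-v$. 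From this description parts \ref{it:gens} and \ref{it:relations} are immediate: the elements $w-v$ generate the augmentation kernel, $w-v = -(v-w)$ gives (a), and $w-v = (x-v)+(w-x)$ gives (b); setting $v=w$ in (a) yields $[v,v]=0$.

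For part \ref{it:unipotent}, fix $y\in\PP^1(K)$. The augmentation ideal (kernel of $\varepsilon\colon \Z[\PP^1(K)]\to\Z$) is well known to be free on $\{v - y : v\neq y\}$: given any $\sum_v a_v v$ with $\sum_v a_v = 0$ we can write it uniquely as $\sum_{v\neq y} a_v (v - y)$, and these elements are $\Z$-linearly independent since the $v$ for $v\neq y$ are part of a $\Z$-basis of $\Z[\PP^1(K)]$. Translating back through $[y,v]\leftrightarrow v-y$, this says precisely that $\{[y,v] : v\neq y\in\PP^1(K)\}$ is a free $\Z$-basis of $\St(K^2)$. Alternatively, as the text suggests, one can derive this directly from \ref{it:gens} and \ref{it:relations}(b): relation (b) with $x=y$ lets one rewrite any $[v,w]$ as $[v,y]+[y,w] = [y,w]-[y,v]$, so the $[y,v]$ span; and a short argument applying the augmentation $[v,w]\mapsto w-v$ (or projecting onto each basis coordinate of $\Z[\PP^1(K)]$) shows there are no relations among them. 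The statement $\St(K^2;R) = \St(K^2)\otimes_\Z R$ then makes all three parts valid over an arbitrary ring $R$ after tensoring, and compatibility of the $\GL_2(K)$-action with the identification $[gv,gw] = g[v,w]$ is clear since $g$ permutes the vertices of $T(K^2)$.

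I expect no serious obstacle here: the whole content is that $T(K^2)$ is a discrete set and $\St(K^2)$ is its reduced $0$th homology, i.e.\ an augmentation-kernel module, so every assertion reduces to elementary facts about augmentation ideals of free modules on a set. The only point requiring a little care is the \emph{freeness} claim in \ref{it:unipotent} (as opposed to mere spanning), which is why I would phrase the argument explicitly in terms of the basis $\{v : v\neq y\}\cup\{y\}$ of $\Z[\PP^1(K)]$ and the unique expansion $\sum a_v v = \sum_{v\neq y} a_v(v-y)$ when $\sum a_v = 0$; that single observation simultaneously gives spanning and linear independence. Since the paper attributes parts \ref{it:gens} and \ref{it:relations} to \cite[Theorem~5]{AGM2}, one could equally just cite that and then run the half-line deduction of \ref{it:unipotent} from \ref{it:gens} and \ref{it:relations}(b) indicated above.
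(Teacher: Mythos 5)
Your proposal is correct. The key observation --- that for $n=2$ the Tits building $T(K^2)$ is a discrete set (its vertices are the lines of $K^2$, i.e.\ the points of $\PP^1(K)$, and no two distinct lines form a flag), so that $\St(K^2)=\widetilde H_0(T(K^2),\Z)$ is literally the augmentation kernel of $\Z[\PP^1(K)]$ with $[v,w]$ corresponding to $w-v$ --- immediately yields all three parts, and your freeness argument for part (iii) via the unique expansion $\sum_v a_v v=\sum_{v\ne y}a_v(v-y)$ when $\sum_v a_v=0$ is complete. The paper itself does not argue this way: it simply cites \cite[Theorem~5]{AGM2} for parts (i) and (ii) (with pointers to Cremona and Gunnells), and then remarks that (iii) follows from (i) and (ii)(b) --- essentially the half-line deduction you sketch at the end of your second paragraph. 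So your proof is a self-contained replacement for the citation; what it buys is independence from the references and an explicit identification of $\St(K^2)$ with the augmentation ideal, which is also the cleanest way to see why (iii) gives a \emph{free} basis rather than merely a spanning set (the point the paper's one-line remark glosses over). The only caveat worth flagging is that the cited results in \cite{AGM2} are stated for $\St(K^n)$ for general $n$, where the building is no longer discrete and the argument is genuinely harder; your shortcut is special to $n=2$, which is all this paper needs.
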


There are generalizations of all these properties for $\St(K^n)$.
 
We need the following theorem.  It follows immediately from  a theorem of Bykovskii \cite[Theorem~1]{By}, who proved a similar result for $\St(\Q^n)$for all $n\ge2$.  Also, see \cite{CFP} for a new treatment of this theorem and related results for other fields than $\Q$.

\begin{theorem}\label{by}  Let $e_1,e_2$ be the standard basis of $\Z^2$.
 $\St(\Q^2)$ is isomorphic to the quotient of the free abelian group generated by symbols $\gen{a,b}$ for all $\Z$-bases $\set{a,b}$ of $\Z^2$ modulo the following relations:
  \begin{enumerate}
  \item \label{it:by-a}   $\gen{a,b}=-\gen{b,a}$ and $ \gen{-a,b}=\gen{a,b}$ for all $\Z$-bases $\set{a,b}$ of $\Z^2$;
  \item \label{it:by-b} $\gen{a,b}+\gen{-b,a+b}+\gen{a+b,-a}=0$ for all $\Z$-bases 
  $\set{a,b}$ of $\Z^2$.
  \end{enumerate}
  The isomorphism is given by $\gen{a,b} \mapsto [a',b']$ where $x'$ denotes the line through $0$ and $x$.
 \end{theorem}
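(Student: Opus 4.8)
The plan is to show that the assignment $\gen{a,b}\mapsto[a',b']$ matches the right‑hand presentation with the usual modular‑symbol description of $\St(\Q^2)$. First I would upgrade Theorem~\ref{thm:mod-props} to a genuine presentation: let $\bar F$ be the free abelian group on symbols $[v,w]$, $v,w\in\PP^1(\Q)$, modulo the relations \ref{it:relations}(a) and (b). Fixing $y$, those relations give $[v,w]=[y,w]-[y,v]$ in $\bar F$, so the tautological surjection $\bar F\to\St(\Q^2)$ carries the generating set $\set{[y,v] : v\neq y}$ to the free $\Z$-basis of part~\ref{it:unipotent}, hence is an isomorphism. Thus it suffices to compare the group $\overline B$ defined by \ref{it:by-a}, \ref{it:by-b} with $\St(\Q^2)$ presented by $[v,w]$ modulo \ref{it:relations}(a),(b), via the homomorphism $\Phi\colon\overline B\to\St(\Q^2)$, $\gen{a,b}\mapsto[a',b']$. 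That $\Phi$ is well defined is immediate: relation \ref{it:by-a} maps to the instances $[a',b']=-[b',a']$ and $[(-a)',b']=[a',b']$ (note $(-a)'=a'$), and relation \ref{it:by-b} maps to $[a',b']+[b',(a+b)']+[(a+b)',a']$, which vanishes by \ref{it:relations}(b) applied to the three lines $a',b',(a+b)'$.

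For surjectivity I would run the Euclidean algorithm, as in Manin's reduction of modular symbols. By \ref{it:relations}(b) it is enough to realize $[\infty,p/q]$ for each $p/q\in\Q$ in lowest terms, where $\infty=(1:0)$. If $a_k=(p_k,q_k)$ are the vectors of convergents of the continued fraction of $p/q$, with $a_{-1}=e_1$, then $p_kq_{k-1}-p_{k-1}q_k=\pm1$, so each $\set{a_{k-1},a_k}$ is a $\Z$-basis of $\Z^2$, and telescoping with \ref{it:relations}(b) gives $[\infty,p/q]=\sum_k[a_{k-1}',a_k']=\Phi\bigl(\sum_k\gen{a_{k-1},a_k}\bigr)$.

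Injectivity is the heart of the matter, and I would phrase it geometrically. Let $X$ be the Farey complex: vertices $\PP^1(\Q)$, edges the unimodular pairs of lines, and $2$-cells the triangles $\set{a',b',(a+b)'}$ with $\set{a,b}$ a $\Z$-basis. After using \ref{it:by-a} to normalize signs and orientations, relation \ref{it:by-b} becomes exactly the mediant relation $\gen{a,b}=\gen{a,a+b}+\gen{a+b,b}$, which is the boundary of such a $2$-cell; so $\overline B$ is the quotient of the oriented $1$-chains of $X$ by the $2$-cell boundaries, and $\Ker\Phi$ is identified through the boundary map with $H_1(X)$. The complex $X$ is connected (again by the Euclidean algorithm) and simply connected --- the classical fact behind the Stern--Brocot tree and the amalgam $\SL_2(\Z)\cong\Z/4\ast_{\Z/2}\Z/6$, equivalently the statement that any two unimodular edge-paths in $X$ with the same endpoints are related by mediant moves. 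Hence $H_1(X)=0$, so $\Phi$ is injective, and, being also surjective, an isomorphism.

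I expect this last point --- the simple connectivity of $X$, equivalently that \ref{it:by-a} and \ref{it:by-b} already generate all relations --- to be the only real obstacle; everything else is bookkeeping with continued fractions. One can also bypass it entirely: Bykovskii \cite{By} establishes precisely a presentation of this kind for $\St(\Q^n)$, and for $n=2$ the only thing to verify is that his relations specialize to \ref{it:by-a} and \ref{it:by-b}.
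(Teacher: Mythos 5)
Your proposal is correct, but it takes a genuinely different route from the paper: the paper gives no argument at all beyond citing Bykovskii \cite[Theorem~1]{By} and remarking that the stated relations are equivalent to his when $n=2$ (an observation you make only in your final sentence), whereas you supply a direct, essentially self-contained proof. Your reduction is the right one: relation \ref{it:by-a} identifies the presented group with the simplicial $1$-chains of the Farey complex (note that $\gen{a,-b}=-\gen{-b,a}=-\gen{b,a}=\gen{a,b}$, so all sign changes are absorbed and distinct generators correspond exactly to oriented unimodular edges), relation \ref{it:by-b} becomes the boundary of the Farey triangle $\set{a',b',(a+b)'}$, and since $\St(\Q^2)=\widetilde H_0(\PP^1(\Q))$ is the group of degree-zero divisors, surjectivity and injectivity of $\Phi$ are precisely $\widetilde H_0=0$ and $H_1=0$ for the Farey complex --- the former via Manin's continued-fraction telescoping, the latter the classical simple connectivity underlying $\SL_2(\Z)\cong \Z/4\ast_{\Z/2}\Z/6$. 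That last fact is the one ingredient you assert rather than prove; it is standard but, in a written version, would need a precise citation or a short push-across-triangles argument, so in practice your proof is not shorter than the paper's citation. What each approach buys: the paper's citation covers all $n$ at once and keeps the section brief, while your argument makes transparent \emph{why} these two families of relations suffice for $n=2$ (acyclicity of the Farey tessellation) and simultaneously re-derives the continued-fraction reduction that the paper later uses computationally in Section~\ref{what}.
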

 
 \begin{remark}
 Bykovskii \cite{By} states this theorem in a slightly different form.  However, using the relations in part~\ref{it:by-a}, it is easy to see that what we wrote is equivalent to his theorem when $n=2$.  Our formulation is better suited to the way we actually carry out computations with modular symbols, as detailed in Section~\ref{what}.
 \end{remark}

\begin{definition}
If $\Gamma$ is any subgroup of $\GL_n(K)$, we define the Steinberg homology of 
$\Gamma$ over $R$ to be
 $H_*(\Gamma, \St(K^n; R))$.
\end{definition}
 
\begin{definition}
If $\Gamma$ is any subgroup of $\GL_2(K)$, we set $[v,w]_\Gamma$ to be the image of $[v,w]$ in the coinvariants $H_0(\Gamma, \St(K^2; R))$.
\end{definition}
In the notation $[v,w]$ and $[v,w]_\Gamma$ we suppress mention of $R$.  The base ring $R$ will always be clear from the context.

  The following corollary follows immediately from Theorem~\ref{by}.  We need it in Section~\ref{what}.
\begin{corollary}\label{triv}
  The Steinberg homology $H_0(\Gamma, \St(K^2;R))$ is isomorphic to the $R$-module
  in Theorem~\ref{by} modulo the further relations
\[[v,w]_\Gamma=[\gamma v,\gamma w]_\Gamma \quad \text{for any $\gamma\in\Gamma$.}\]
\end{corollary}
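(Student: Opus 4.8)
The plan is to unwind the definition of $H_0$ as group coinvariants and then substitute into it the presentation of $\St(\Q^2;R)$ supplied by Theorem~\ref{by} (so here $K=\Q$, the case that theorem covers). Recall that for a left $R\Gamma$-module $M$ one has $H_0(\Gamma,M)=M_\Gamma=M/N$, where $N$ is the $R$-submodule of $M$ generated by all differences $m-\gamma m$ with $m\in M$ and $\gamma\in\Gamma$, and that by definition $[v,w]_\Gamma$ is the image of the modular symbol $[v,w]$ in this quotient.

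The first step is the elementary observation that $N$ is already generated, as an $R$-module, by the differences $g-\gamma g$ where $g$ ranges over any fixed $R$-module generating set of $M$ and $\gamma$ over $\Gamma$: if $m=\sum_i r_i g_i$, then $m-\gamma m=\sum_i r_i\,(g_i-\gamma g_i)$. I apply this with $M=\St(\Q^2;R)$ and the generating set of modular symbols $[v,w]$ (the images of the generators $\gen{a,b}$ under the isomorphism of Theorem~\ref{by}). Since $\gamma[v,w]=[\gamma v,\gamma w]$, it follows that $H_0(\Gamma,\St(\Q^2;R))$ is the quotient of $\St(\Q^2;R)$ by the relations $[v,w]=[\gamma v,\gamma w]$ for all $\gamma\in\Gamma$, which in the quotient are exactly the asserted relations $[v,w]_\Gamma=[\gamma v,\gamma w]_\Gamma$.

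The second step replaces $\St(\Q^2;R)$ itself by the presentation of Theorem~\ref{by}. That theorem is stated with $\Z$-coefficients, but because $\St(\Q^2;R)=\St(\Q^2)\otimes_\Z R$ and $-\otimes_\Z R$ is right exact, tensoring a presentation $F_1\to F_0\to\St(\Q^2)\to 0$ of Theorem~\ref{by} by $R$ shows that $\St(\Q^2;R)$ is the free $R$-module on the symbols $\gen{a,b}$, indexed by $\Z$-bases $\set{a,b}$ of $\Z^2$, modulo the $R$-linear span of relations \ref{it:by-a} and \ref{it:by-b}. Splicing this description into the conclusion of the first step yields precisely the claimed presentation of $H_0(\Gamma,\St(\Q^2;R))$.

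I do not expect any real obstacle: as the paper says, this is a formal consequence of Theorem~\ref{by}. The only two points deserving a word of care are the base change from $\Z$ to $R$, handled by right exactness of the tensor product, and the reduction of the coinvariance relations to a generating set, which is the one-line identity in the first step. One minor bookkeeping remark worth making along the way: when $\gamma\in\GL_2(\Z)$ it sends a $\Z$-basis of $\Z^2$ to another, so $[\gamma v,\gamma w]$ is again one of the distinguished generators $\gen{\cdot,\cdot}$ and the relation can be written directly among them; for a general $\gamma$ one first re-expands $[\gamma v,\gamma w]$ in terms of the $\gen{\cdot,\cdot}$ using Theorem~\ref{by} before reading off the relation.
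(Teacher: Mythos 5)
Your argument is correct and is exactly the routine verification the paper omits (it states the corollary "follows immediately" from Theorem~\ref{by} without proof): pass to coinvariants, note the coinvariance submodule is generated by differences of generators, and base-change the presentation from $\Z$ to $R$ by right exactness of $\otimes_\Z R$. The two points you flag — the base change and the fact that $\gamma\in\GL_2(\Z)$ carries unimodular symbols to unimodular symbols — are indeed the only ones needing care, and you handle both.
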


From \cite[Proposition~VIII.8.2]{B}, it follows that $\St(\Q^n)\otimes \det$ is the dualizing module for arithmetic subgroups of $\GL_n(\Z)$ if $n$ is even, and if $n$ is odd, then  $\St(\Q^n)$ is the dualizing module.  Therefore, from the exact sequence \cite[(3.6) p. 280]{B} we obtain Brown's generalization of the Borel-Serre duality theorem:
\begin{theorem}\label{BS}
Let $\Gamma$ be a subgroup of finite index in $\GL_2(\Z)$, and let $R$  be a ring on which $6$ acts invertibly.  Then there is a natural isomorphism for $i=0,1$:
\[
\lambda \colon H_i(\Gamma, \St(\Q^2;R)) \to H^{1-i}(\Gamma, R).
\]
\end{theorem}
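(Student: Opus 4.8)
The plan is to deduce this from Brown's general duality machinery for groups of finite virtual cohomological dimension, exactly as cited. First I would recall that $\Gamma$, being a finite-index subgroup of $\GL_2(\Z)$, is a virtual duality group: it contains a torsion-free subgroup of finite index (e.g.\ a principal congruence subgroup $\Gamma(n)$ with $n\ge 3$), and such a subgroup acts freely and cocompactly on the Borel--Serre bordification of the symmetric space for $\GL_2$, which is a manifold with corners of dimension equal to $\mathrm{vcd}(\Gamma)+1$. For $\GL_2(\Z)$ one has $\mathrm{vcd}=1$, so the relevant top homology is $H_1$ and $H_0$ appears as $H^{1-i}$ for $i=0,1$. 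The dualizing module is identified in \cite[Prop.~VIII.8.2]{B}: for $\GL_n(\Z)$ it is $\St(\Q^n)$ twisted by $\det$ when $n$ is even and $\St(\Q^n)$ itself when $n$ is odd. Since $n=2$ is even, the dualizing module is $\St(\Q^2)\otimes\det$.

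Next I would invoke Brown's duality exact sequence \cite[(3.6), p.~280]{B}, which for a group $\Gamma$ of virtual cohomological dimension $d$ with dualizing module $D$ reads (in the relevant low degrees)
\[
H_i(\Gamma, M\otimes D) \;\cong\; H^{d-i}(\Gamma, M)
\]
whenever the order of every finite subgroup of $\Gamma$ acts invertibly on the coefficients — this is precisely where the hypothesis that $6$ is invertible in $R$ enters. The finite subgroups of $\GL_2(\Z)$ have orders dividing $24$, but the primes that occur are only $2$ and $3$; inverting $6$ kills all the relevant torsion obstructions in the spectral sequence (equivalently, makes the Farrell--Tate cohomology vanish), so Brown's isomorphism becomes an honest isomorphism rather than merely an isomorphism modulo torsion. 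Taking $M=R$ with trivial action and $d=1$ gives
\[
H_i(\Gamma, \St(\Q^2;R)\otimes\det) \;\cong\; H^{1-i}(\Gamma, R), \qquad i=0,1.
\]
The last step is to remove the $\det$ twist. Here I would observe that $\det\colon\Gamma\to\{\pm1\}$ has image in $R^\times$ a group of order dividing $2$, and $2$ is invertible in $R$; one can then either absorb the twist by noting that tensoring with a character of order prime to the characteristic does not change the homology up to the natural isomorphism one wants, or — more carefully — one checks that the specific isomorphism $\lambda$ is built from the fundamental class of the Borel--Serre boundary in a way that is already $\GL_2(\Z)$-equivariant after accounting for orientations, so the $\det$ factor cancels against the orientation character of the manifold with corners. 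Naturality in $\Gamma$ (for the restriction/corestriction maps among finite-index subgroups) is automatic from the naturality of Brown's construction.

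The main obstacle is the bookkeeping around the $\det$ twist and orientations: one must be careful that the orientation module of the Borel--Serre manifold, the $\det$ twist appearing in the dualizing module, and any sign conventions in the definition of the modular-symbol classes $[v,w]$ all match up, so that the clean untwisted statement in the theorem is correct and the isomorphism is natural. Once the invertibility of $6$ is in force this is essentially formal, but it is the step where an error would most easily creep in; everything else is a direct citation of \cite{B}.
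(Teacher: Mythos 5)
Your route is the same as the paper's: the paper's entire derivation consists of quoting \cite[Proposition VIII.8.2]{B} for the dualizing module (namely $\St(\Q^2)\otimes\det$, since $n=2$ is even) and the exact sequence \cite[(3.6), p.~280]{B}, with invertibility of $6$ killing the Farrell cohomology terms so that the sequence collapses to an isomorphism. Up to the point where you obtain $H_i(\Gamma,\St(\Q^2;R)\otimes\det)\cong H^{1-i}(\Gamma,R)$, your argument and the paper's coincide.

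The gap is in your final step, where you try to discard the $\det$ twist; neither of your two suggestions works, and in fact no argument can, because the untwisted statement is genuinely different from what Brown's machinery produces. Tensoring coefficients with the order-two character $\det$ \emph{does} change group (co)homology even when $2$ is invertible: $H^0(\GL_2(\Z),R)=R$ while $H^0(\GL_2(\Z),R\otimes\det)=0$. Concretely, for $i=1$ one has $H_1(\GL_2(\Z),\St(\Q^2;\Q))=0$ --- the generator of $H_1(\SL_2(\Z),\St(\Q^2;\Q))\cong\Q$ is the relative fundamental class of $\overline{Y}/\SL_2(\Z)$, on which $\diag(1,-1)$ acts by $-1$ since it reverses orientation --- whereas $H^0(\GL_2(\Z),\Q)=\Q$; and for $i=0$, $\Gamma=\Gamma_0(11)^\pm$, the source has $\C$-dimension $2$ (cuspidal part of dimension $g_0(11)=1$ plus one Eisenstein class) while $H^1(\Gamma_0(11)^\pm,\C)$ has dimension $1$. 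Your second suggestion, that the $\det$ ``cancels against the orientation character of the manifold with corners,'' is backwards: the orientation character is precisely the source of the $\det$ in the dualizing module, so it has already been accounted for and cannot be used a second time. The correct consequence of \cite{B} is $H_i(\Gamma,\St(\Q^2;R))\cong H^{1-i}(\Gamma,R\otimes\det)$, the twist evaporating exactly when $\Gamma\subset\SL_2(\Z)$. To be fair, the paper's own two-sentence derivation passes over this point in silence and states the untwisted isomorphism; the discrepancy is harmless for the uses made of the theorem there (either $\Gamma\subset\SL_2(\Z)$, or only finiteness/freeness of the modules is needed), but the step you flagged as the likely trouble spot is indeed where the statement as written breaks down.
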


In particular, if $R=\C$ then we can compose $\lambda$ with the Eichler-Shimura isomorphism to obtain an isomorphism between $H_0(\Gamma, \St(\Q^2;\C))$ and a space of modular forms of weight $2$ for $\Gamma$.  We do this in Section~\ref{complex}.

\section{The connecting map \texorpdfstring{$\psi$}{psi}} \label{conn}

In this section, $G$ can be any subgroup of $\GL_2(\Q)$, and $R$ any ring.  
Until further notice, we fix the real quadratic field $E$ and write $\psi_{G}$ instead of $\psi_{G,E}$.

Let $e=(1:0)\in\PP^1(\Q)$.  We denote the vector in $\Q^2$ 
with components $(1,0)$ by $e_1$.  

From item~\ref{it:unipotent} of Theorem~\ref{thm:mod-props}, for any extension $K/\Q$, the Steinberg module 
$\St(K^2;R)$ has the $R$-basis $[e,\alpha]$ where $\alpha$ runs over  $\PP^1(K) \setminus \set{e}$.

Refer to the exact sequence~\eqref{seq} in Section~\ref{intro} for the definition of the 
$\GL_2(\Q)$-module $C$ and to Definition~\ref{psidef} for the map $\psi_G$. 
From  \cite[Theorem 5.1]{A}, we know that $C$ is a free $R$-module with $R$-basis 
$[e,b]'$, where $b\in \PP^1(E) \setminus \PP^1(\Q)$  
and $[x,y]'$  means the image of $[x,y]$ in $C$.

Let $R_b$ denote the $R$-span of $[e,b]'$.  Then 
\[
C\simeq\bigoplus_{b\in \PP^1(E) \setminus \PP^1(\Q)} R_b.
\]
This is an isomorphism of $\GL_2(\Q)$-modules where on the right hand side,   
$g\in \GL_2(\Q)$ takes the element $n$ in $R_b$ to the element $n$ in $R_{gb}$. 
(As usual, $gb$ denotes the action of $g$ on $\PP^1(E) \setminus \PP^1(\Q)$ via linear fractional transformations.)

Therefore we may view $\psi_G$ as a map
\[
\psi_G\colon H_1(G, \bigoplus_{b\in \PP^1(E) \setminus \PP^1(\Q)} R_b) \to H_0(G, \St(\Q^2;R)).
\]
By Shapiro's lemma,
\[
H_1(G, \bigoplus_{b\in \PP^1(E)\setminus \PP^1(\Q)} R_b)\simeq \bigoplus_{b\in\cB}
H_1(G_b,R),
\]
where $G_b$ is the stabilizer in $G$ of $b$,
and $\cB$ is a set of representatives of the $G$-orbits of 
$\PP^1(E)\setminus \PP^1(\Q)$.
(Compare \cite[Section~5]{A}.)

We use the bar resolution to compute the homology of a group $\Delta$ with coefficients in an $R\Delta$-module $M$.  Use the notation on page 19 of \cite{B}.
Then a $1$-chain for the homology of $\Delta$ with coefficients in $M$ is of the form
 $z=\sum [\delta]\otimes_R m_\delta\in 
 (R\Delta)^2\otimes_{R\Delta}M\simeq R\Delta\otimes_R M$.  
 The boundary map is $\partial ([\delta]\otimes_R m) = \delta m - m\in
R\Delta\otimes_{R\Delta}M\simeq M$.  Let $\cRR$ be the $R$-submodule of $M$ spanned by $\delta m - m$ as $m$ ranges over $M$.  Then
$H_0(\Delta,M) = M/\cRR=M_\Delta$, and
  $H_1(\Delta,M)$ equals $\set{z \given \sum (\delta m_\delta - m_\delta) = 0 }$ modulo $1$-boundaries.
  
Now set $M=R$, the trivial module.
Fix $b\in\cB$, and  suppose  $z=\sum [\gamma]\otimes m_\gamma$ is a $1$-cycle for 
  $G_b$ with coefficients in $R_b$.  
  Here, $\gamma$ runs through $G_b$, $m_\gamma=r_\gamma[e,x_\gamma]'\in R_b$ with 
 $ r_\gamma\in R$,
  $x_\gamma\in \PP^1(E) \setminus \PP^1(\Q)$, 
  $\sum r_\gamma(\gamma m_\gamma - m_\gamma) = 0$ and $r_\gamma=0$ for all but finitely many $\gamma$.

Note that
\[\gamma[e,x_\gamma]=[\gamma e,\gamma x_\gamma]
  =[e,\gamma x_\gamma]+[\gamma e,e]\] 
  so that
  \[\gamma m_\gamma=r_\gamma\gamma[e,x_\gamma]'=r_\gamma[e,\gamma x_\gamma]'.\]

Since $G_b$ is abelian (see Section~\ref{stab}), there is an isomorphism
  \[
  G_{b}\otimes_\Z R \to H_1(G_b, R_b)
  \]
  that sends $\gamma\otimes 1$ to $[\gamma]\otimes_R [e,b]'$, for any 
  $\gamma\in G_b$.  
  (This makes sense: $\gamma\otimes_R [e,b]'$ is a cycle because $\gamma b=b$.)
  
  \begin{theorem}\label{psiform}
    Let $b\in \PP^1(E) \setminus \PP^1(\Q)$, and let $G$ be a subgroup of $\GL_2(\Q)$. Let $G_b$ be the stabilizer of $b$ in 
    $G$.
Then for any $\gamma\in G_b$,
\[ \psi_G([\gamma]\otimes_R [e,b]')= [e,\gamma e]_G.\]
  \end{theorem}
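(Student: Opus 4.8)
The plan is to trace the connecting homomorphism $\partial\colon H_1(G,C)\to H_0(G,\St(\Q^2;R))$ explicitly through the snake-lemma construction, using the concrete $1$-cycle description set up just before the theorem. Recall the short exact sequence $0\to \St(\Q^2;R)\to \St(E^2;R)\to C\to 0$, and that $\psi_G=-\partial$. Under the identification $H_1(G_b,R_b)\cong G_b\otimes_\Z R$, the element $[\gamma]\otimes_R[e,b]'$ corresponds to the $1$-cycle $z=[\gamma]\otimes_R[e,b]'$ in the bar complex computing $H_1(G_b,R_b)$ (this is a cycle precisely because $\gamma b=b$, so $\gamma[e,b]'=[e,b]'$ in $C$). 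Pushing forward along Shapiro's isomorphism, the corresponding class in $H_1(G,C)$ is represented by $[\gamma]\otimes_R[e,b]'$, now viewed in $R G\otimes_R C$.

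The next step is to lift $[e,b]'\in C$ to an element of $\St(E^2;R)$: the obvious lift is the modular symbol $[e,b]\in\St(E^2;R)$, whose image in $C$ is by definition $[e,b]'$. So I would form the chain $\widetilde z=[\gamma]\otimes_R[e,b]\in RG\otimes_R\St(E^2;R)$, which lifts $z$. The snake lemma now instructs us to apply the boundary map of the bar complex to $\widetilde z$: $\partial\widetilde z=\gamma\cdot[e,b]-[e,b]=[\gamma e,\gamma b]-[e,b]=[\gamma e,b]-[e,b]$, using $\gamma b=b$. By the relations in Theorem~\ref{thm:mod-props}\ref{it:relations}, $[\gamma e,b]-[e,b]=-([e,b]-[\gamma e,b])=-[e,\gamma e]$ — more carefully, $[e,b]=[e,\gamma e]+[\gamma e,b]$, so $[\gamma e,b]-[e,b]=-[e,\gamma e]$. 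Crucially this element $-[e,\gamma e]$ lies in the image of $\St(\Q^2;R)\hookrightarrow\St(E^2;R)$ because $e$ and $\gamma e$ are both in $\PP^1(\Q)$ (as $\gamma\in G\subset\GL_2(\Q)$). Hence $\partial$ of our class is represented by the $0$-chain $-[e,\gamma e]\in\St(\Q^2;R)$, i.e. $\partial([\gamma]\otimes_R[e,b]')=-[e,\gamma e]_G$ in $H_0(G,\St(\Q^2;R))$, and therefore $\psi_G([\gamma]\otimes_R[e,b]')=-\partial(\cdots)=[e,\gamma e]_G$, as claimed.

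The main thing to be careful about is bookkeeping rather than any deep obstacle: one must check that the identifications (Shapiro's lemma, the isomorphism $G_b\otimes_\Z R\cong H_1(G_b,R_b)$, the decomposition $C\cong\bigoplus_b R_b$) are all compatible with the bar-resolution description of the connecting map, and that the lift $[e,b]$ is an honest element of $\St(E^2;R)$ mapping to $[e,b]'$ — this is exactly the content of the basis statement from \cite[Theorem~5.1]{A} quoted above. A secondary point: the general $1$-cycle for $G_b$ with coefficients in $R_b$ was written as $\sum[\gamma]\otimes m_\gamma$ with $m_\gamma=r_\gamma[e,x_\gamma]'$, but since every such cycle is homologous to one of the form $[\gamma]\otimes_R[e,b]'$ (via the isomorphism $G_b\otimes_\Z R\to H_1(G_b,R_b)$ sending $\gamma\otimes 1\mapsto[\gamma]\otimes_R[e,b]'$), it suffices to compute $\psi_G$ on these generators, which is what the theorem statement asks. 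So the proof is essentially a direct unwinding of definitions, and the one genuine input is the snake-lemma recipe applied to the explicit bar-complex cycles.
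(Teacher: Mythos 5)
Your proposal is correct and follows essentially the same route as the paper: lift $[\gamma]\otimes_R[e,b]'$ to the bar-complex chain $[\gamma]\otimes_R[e,b]$ with coefficients in $\St(E^2;R)$, take the boundary $(\gamma-1)[e,b]=[\gamma e,b]-[e,b]=[\gamma e,e]=-[e,\gamma e]$, observe this lands in $\St(\Q^2;R)$, and negate to account for $\psi_G=-\partial$. The bookkeeping points you flag (Shapiro, the lift, the identification $G_b\otimes_\Z R\cong H_1(G_b,R_b)$) are exactly the ones the paper sets up in the discussion preceding the theorem.
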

  
    \begin{proof}  Compute the boundary map coming from the short exact sequence~\eqref{seq} as follows:
Lift $[\gamma]\otimes_R [e,b]'$ to a 1-chain for $G$ with coefficients in $ \St(E^2;R))$.
 In fact we can lift it to 
$[\gamma]\otimes_R [e,b]$.  Now take the boundary in the chain complex, obtaining
\[
(\gamma-1)[e,b] = [\gamma e,b] - [e,b]  = [e,b]+[\gamma e,e]-[e,b]
=[\gamma e,e] 
\]
and view the result modulo $1$-boundaries to obtain $[\gamma e,e]_G\in
H_0(G, \St(\Q^2;R))$.  

Since $\psi_G$ is the negative of the boundary map, this proves the theorem.
  \end{proof}
  
  From Section~\ref{stab}, we see that if $G=\Gamma$ is a congruence subgroup of 
  $\GL_2(\Z)$ then $\Gamma_b/\set{\pm I}$ is cyclic.  It is clear from the formula in the theorem that as a function of $\gamma$, 
  $ \psi_\Gamma([\gamma]\otimes_R [e,b]')$ factors through $\Gamma_b/\set{\pm I}$. Let $\gamma_b$ be an element of $\Gamma_b$ which generates $\Gamma_b/\set{\pm I}$.  Then the image of $\psi_\Gamma$ restricted to $H_1(\Gamma_b, R_b)$ is generated over $R$ by 
  $[e,\gamma_b e]_\Gamma$.  
  (Note that $[e,\gamma_b^k e]_\Gamma=[e,\gamma_b e]_\Gamma+[\gamma_be,\gamma_b^2 e]_\Gamma+\cdots+[\gamma_b^{k-1} e, \gamma_b^ke]_\Gamma]=
  k[e,\gamma_b e]_\Gamma$
  and $[e,\gamma_b\inv e]_\Gamma=[\gamma_b e,e]_\Gamma
  =-[e,\gamma_b e]_\Gamma$.)
  Therefore:
  
   \begin{corollary} \label{cor:im_psi}
   Let $\Gamma$ be a congruence subgroup of $\GL_2(\Z)$.
 Then the image of $\psi_\Gamma$ is the $R$-span of the symbols 
 $[e,\gamma_b e]_\Gamma$, where  $b$ 
 runs over $\PP^1(E) \setminus \PP^1(\Q)$.
  \end{corollary}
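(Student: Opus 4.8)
The plan is to combine the Shapiro-lemma decomposition of $H_1(\Gamma,C)$ with the explicit formula of Theorem~\ref{psiform}, and then assemble the pieces. First I would recall, as displayed earlier in this section, that Shapiro's lemma gives
\[
H_1(\Gamma, C)\;\simeq\;\bigoplus_{b\in\cB} H_1(\Gamma_b, R_b),
\]
where $\cB$ is a set of representatives for the $\Gamma$-orbits on $\PP^1(E)\setminus\PP^1(\Q)$, and that under this identification the summand indexed by $b$ is generated by cycles of the form $[\gamma]\otimes_R[e,b]'$ with $\gamma\in\Gamma_b$. Since $\psi_\Gamma$ is an $R$-module homomorphism whose domain is this direct sum, its image is the sum over $b\in\cB$ of the images of the restrictions $\psi_\Gamma|_{H_1(\Gamma_b,R_b)}$, so it suffices to identify each such restricted image.

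Next I would analyse a single summand. By the results quoted from Section~\ref{stab}, for a congruence subgroup $\Gamma$ the group $\Gamma_b/\set{\pm I}$ is cyclic; fix a generator $\gamma_b$. Because $\Gamma_b$ is abelian there is an isomorphism $\Gamma_b\otimes_\Z R\to H_1(\Gamma_b,R_b)$ sending $\gamma\otimes 1$ to $[\gamma]\otimes_R[e,b]'$, and Theorem~\ref{psiform} gives $\psi_\Gamma([\gamma]\otimes_R[e,b]')=[e,\gamma e]_\Gamma$. I then use the two elementary facts that the assignment $\gamma\mapsto[e,\gamma e]_\Gamma$ kills $\pm I$ and behaves logarithmically on powers of $\gamma_b$: the telescoping relation $[e,\gamma_b^k e]_\Gamma=\sum_{j=0}^{k-1}[\gamma_b^j e,\gamma_b^{j+1}e]_\Gamma=k\,[e,\gamma_b e]_\Gamma$ (each term equals $[e,\gamma_b e]_\Gamma$ in the coinvariants since $\gamma_b^j\in\Gamma$) together with $[e,\gamma_b\inv e]_\Gamma=[\gamma_b e,e]_\Gamma=-[e,\gamma_b e]_\Gamma$. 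It follows that the image of $\psi_\Gamma|_{H_1(\Gamma_b,R_b)}$ is exactly the cyclic $R$-module $R\cdot[e,\gamma_b e]_\Gamma$.

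Finally I would assemble. Summing over $b\in\cB$ shows that the image of $\psi_\Gamma$ equals the $R$-span of $\set{[e,\gamma_b e]_\Gamma : b\in\cB}$. To upgrade $\cB$ to all of $\PP^1(E)\setminus\PP^1(\Q)$ I argue by a two-sided inclusion: for \emph{every} $b\in\PP^1(E)\setminus\PP^1(\Q)$ the element $[\gamma_b]\otimes_R[e,b]'$ is a genuine $1$-cycle for $\Gamma_b$ (as $\gamma_b b=b$), so by Theorem~\ref{psiform} the class $[e,\gamma_b e]_\Gamma$ lies in the image of $\psi_\Gamma$; hence the $R$-span over all $b$ is contained in the image. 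Conversely $\cB\subset\PP^1(E)\setminus\PP^1(\Q)$, so the $R$-span over all $b$ contains the $R$-span over $\cB$, which is the entire image. Therefore the image equals the $R$-span over all $b$, as claimed.

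There is essentially no serious obstacle here: the statement is a formal consequence of Theorem~\ref{psiform} and the Shapiro decomposition already constructed above. The only points needing a little care are the mild ambiguity in the choice of generator $\gamma_b$ of $\Gamma_b/\set{\pm I}$, which is harmless because $[e,\gamma_b\inv e]_\Gamma=-[e,\gamma_b e]_\Gamma$ so the $R$-span is independent of this choice, and the book-keeping involved in passing from orbit representatives back to all of $\PP^1(E)\setminus\PP^1(\Q)$.
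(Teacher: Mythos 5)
Your proof is correct and follows essentially the same route as the paper: the Shapiro decomposition over orbit representatives, the formula of Theorem~\ref{psiform}, and the telescoping identities $[e,\gamma_b^k e]_\Gamma=k[e,\gamma_b e]_\Gamma$ and $[e,\gamma_b\inv e]_\Gamma=-[e,\gamma_b e]_\Gamma$ are exactly the ingredients the paper uses in the paragraph preceding the corollary. Your explicit two-sided inclusion upgrading the set of orbit representatives $\cB$ to all of $\PP^1(E)\setminus\PP^1(\Q)$ is a small point the paper leaves implicit, and you handle it correctly.
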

  
  The image of $\psi_G$ depends only on a certain subgroup of $G$.  
The formula for $\psi_G$ given in Theorem~\ref{psiform} suggests we make the following definition.

\begin{definition}\label{Psidef}
Let $K\subset L\subset \GL_2(\Q)$ be subgroups, and let 
$\Psi_{K,L}\colon K\to H_0(L,\St(\Q^2;R))$ be given by $k\mapsto [e,ke]_L$.  
\end{definition}
  
  \begin{lemma}\label{Psilemma1}
Let $K\subset L\subset \GL_2(\Q)$ be subgroups.  Then  
$\Psi_{K,L}$ is a group homomorphism.
\end{lemma}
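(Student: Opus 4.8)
The plan is to show directly from the defining formula $\Psi_{K,L}(k)=[e,ke]_L$ that $\Psi_{K,L}(k_1k_2)=\Psi_{K,L}(k_1)+\Psi_{K,L}(k_2)$ for all $k_1,k_2\in K$. The essential computation is the cocycle-type identity
\[
[e,k_1k_2e] = [e,k_1e] + [k_1e,k_1k_2e],
\]
which is simply the additivity relation (Theorem~\ref{thm:mod-props}\ref{it:relations}(b)) applied with intermediate point $x=k_1e$. Applying $k_1\in L$ to the basic symbol $[e,k_2e]$ and using the $\GL_2(\Q)$-action gives $k_1[e,k_2e]=[k_1e,k_1k_2e]$, so the second term on the right is $k_1$ applied to $[e,k_2e]$. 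In $H_0(L,\St(\Q^2;R))$, which is the module of $L$-coinvariants, any element and its $L$-translate become equal; hence $[k_1e,k_1k_2e]_L=[e,k_2e]_L=\Psi_{K,L}(k_2)$. Putting the pieces together, $\Psi_{K,L}(k_1k_2)=[e,k_1k_2e]_L=[e,k_1e]_L+[k_1e,k_1k_2e]_L=\Psi_{K,L}(k_1)+\Psi_{K,L}(k_2)$, as required.

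The only points that need a word of care are bookkeeping: first, that $k_1e$ is again a point of $\PP^1(\Q)$, so all the symbols appearing genuinely lie in $\St(\Q^2;R)$ (this is immediate since $k_1\in\GL_2(\Q)$); second, that passing from $[k_1e,k_1k_2e]$ to its class in $H_0(L,\St(\Q^2;R))$ and then using $k_1\in L\subset L$ to identify it with the class of $[e,k_2e]$ is exactly the definition of coinvariants — there is nothing to prove beyond invoking that $[v,w]_L=[\ell v,\ell w]_L$ for $\ell\in L$, which is the relation recorded in Corollary~\ref{triv}. One should also note $\Psi_{K,L}(1)=[e,e]_L=0$ by Theorem~\ref{thm:mod-props}\ref{it:relations}(a), consistent with the homomorphism property.

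There is essentially no obstacle here; the statement is a formal consequence of the simplicial relations defining modular symbols together with the definition of $H_0$ as coinvariants. If I were to flag anything, it is only to be careful that the additivity relation is applied in $\St(\Q^2;R)$ (before passing to coinvariants) rather than trying to manipulate things directly in $H_0$, and to make sure the direction of the $L$-action in the coinvariant identification matches the left-module structure on $\St(\Q^2;R)$ fixed earlier in the paper. With those conventions fixed, the three-line computation above is the whole proof.
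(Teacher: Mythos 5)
Your proof is correct and is essentially identical to the paper's: both use the additivity relation $[e,xye]_L=[e,xe]_L+[xe,xye]_L$ followed by the coinvariance identity $[xe,xye]_L=[e,ye]_L$ for $x\in L$. The extra remarks on conventions are fine but not needed.
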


  \begin{proof}
    For any $x, y \in K$, 
\[\Psi_{K,L}(xy)= [e,xye]_L=[e,xe]_L+[xe,xye]_L=
[e,xe]_L+[e,ye]_L=\Psi_{K,L}(x)+\Psi_{K,L}(y).\]
\end{proof}
  
   \begin{lemma}\label{S}
Let $S$ be a subset of $G$, and let $\gen{S}$ be the subgroup of $G$ generated by $S$.  Then the $R$-spans in
 $H_0(G, \St(\Q^2;R))$ of 
$\set{[e, se]_G \given s\in S }$ and $\set{[e, te]_G \given t\in \gen{S} }$ are the same.
   \end{lemma}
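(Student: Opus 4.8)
The plan is to show that both $R$-spans are actually equal to the image under $\Psi_{G,G}$ of the full subgroup $\gen{S}$, using that $\Psi_{G,G}$ is a group homomorphism (Lemma~\ref{Psilemma1}, applied with $K = L = G$). Write $V(T)$ for the $R$-span in $H_0(G,\St(\Q^2;R))$ of $\set{[e,te]_G \given t \in T}$ for a subset $T \subseteq G$; note $[e,te]_G = \Psi_{G,G}(t)$, so $V(T)$ is the $R$-submodule generated by $\Psi_{G,G}(T)$. Since $S \subseteq \gen{S}$, one inclusion $V(S) \subseteq V(\gen{S})$ is immediate. The content is the reverse inclusion.

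For the reverse inclusion, the key point is that $\Psi_{G,G}\colon G \to H_0(G,\St(\Q^2;R))$ is a homomorphism of abelian groups, hence the set $\set{\Psi_{G,G}(t) \given t \in \gen{S}}$ is contained in the subgroup of $(H_0(G,\St(\Q^2;R)), +)$ generated by $\set{\Psi_{G,G}(s) \given s \in S}$. Indeed, any $t \in \gen{S}$ can be written as a finite product $s_1^{\pm 1}\cdots s_r^{\pm 1}$ with each $s_i \in S$, so $\Psi_{G,G}(t) = \sum_i \pm\Psi_{G,G}(s_i)$ is a $\Z$-linear combination of the generators $\Psi_{G,G}(s)$, $s\in S$. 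Such a $\Z$-linear combination is in particular an $R$-linear combination (via the structure map $\Z \to R$), so $\Psi_{G,G}(t) \in V(S)$. Ranging over all $t \in \gen{S}$ gives $V(\gen{S}) \subseteq V(S)$, and combined with the trivial inclusion we get $V(S) = V(\gen{S})$, which is the assertion.

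There is essentially no obstacle here: the lemma is a formal consequence of Lemma~\ref{Psilemma1}. The only thing to be slightly careful about is the distinction between the subgroup of $H_0(G,\St(\Q^2;R))$ generated by a set and its $R$-span, but since every element of the generated subgroup is a $\Z$-combination of generators and hence lies in the $R$-span, this causes no trouble. One could phrase the whole argument even more compactly by saying: $V(S)$ is an $R$-submodule of $H_0(G,\St(\Q^2;R))$ containing $\Psi_{G,G}(S)$, hence (being in particular a subgroup) it contains the subgroup $\Psi_{G,G}(\gen{S})$ since $\Psi_{G,G}$ is a homomorphism; thus $V(\gen{S}) \subseteq V(S)$, and the opposite inclusion is clear.
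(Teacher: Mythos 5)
Your proof is correct and follows essentially the same route as the paper: both arguments reduce the lemma to the fact that $g\mapsto[e,ge]_G$ is a group homomorphism (Lemma~\ref{Psilemma1}), so the span of its values on $\gen{S}$ coincides with the span of its values on $S$. Your version simply spells out the $\Z$-linear-combination step that the paper leaves implicit.
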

   
      \begin{proof}
   The $\Z$-span of $\set{[e, te]_G \given t\in \gen{S} }$ is the same as the image of $\Psi_{\gen{S},G}$.  By Lemma~\ref{Psilemma1}, this is the same as the $\Z$-span of $\set{[e, se]_G \given s\in S }$.  Similarly, the $R$-spans are also the same.

   \end{proof}
     
    \begin{corollary}\label{widetilde}
 The image of $\psi_G$ is the span of the symbols $[e,ge]_G$, where 
 $g$ runs over the group $\widetilde K(G)$ generated by the stabilizers of $b$ in $G$, where $b$ runs over $\PP^1(E) \setminus \PP^1(\Q)$.
   \end{corollary}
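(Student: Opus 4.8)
The plan is to package together the explicit formula of Theorem~\ref{psiform}, the Shapiro decomposition $H_1(G,C)\simeq\bigoplus_{b\in\cB}H_1(G_b,R_b)$ recorded just before that theorem, and Lemma~\ref{S}.

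First I would identify the image of $\psi_G$ restricted to a single summand $H_1(G_b,R_b)$ for $b\in\cB$. Since $R_b$ is the free rank-one $R$-module on $[e,b]'$, every $1$-cycle for $G_b$ with coefficients in $R_b$ has the form $z=\sum_{\gamma\in G_b}[\gamma]\otimes_R r_\gamma[e,b]'$ with $r_\gamma\in R$ almost all zero. Lifting each $r_\gamma[e,b]'$ to $r_\gamma[e,b]\in\St(E^2;R)$ and running the boundary computation in the proof of Theorem~\ref{psiform} termwise (using $[\gamma e,b]=[\gamma e,e]+[e,b]$ together with the fact that $[\gamma e,e]$ already lies in $\St(\Q^2;R)$), one gets $\psi_G(z)=\sum_\gamma r_\gamma[e,\gamma e]_G$. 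Hence the image of $\psi_G|_{H_1(G_b,R_b)}$ is exactly the $R$-span of $\set{[e,\gamma e]_G\given \gamma\in G_b}$, the reverse inclusion being immediate from Theorem~\ref{psiform} applied to the cycles $[\gamma]\otimes_R[e,b]'$. Summing over $b\in\cB$ (legitimate by Shapiro's lemma) shows that $\image(\psi_G)$ is the $R$-span of $\set{[e,\gamma e]_G\given b\in\cB,\ \gamma\in G_b}$.

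Next I would promote the orbit representatives $\cB$ to all of $\PP^1(E)\setminus\PP^1(\Q)$. For this note that Theorem~\ref{psiform} holds for an \emph{arbitrary} $b\in\PP^1(E)\setminus\PP^1(\Q)$: for any $g\in G_b$ the chain $[g]\otimes_R[e,b]'$ is a $1$-cycle (because $gb=b$ forces $g\cdot[e,b]'=[ge,b]'=[ge,e]'+[e,b]'=[e,b]'$ in $C$, the term $[ge,e]'$ vanishing as the image of an element of $\St(\Q^2;R)$), and $\psi_G$ sends it to $[e,ge]_G$. Thus $[e,ge]_G\in\image(\psi_G)$ for every $g\in\bigcup_{b}G_b$, which combined with the previous paragraph gives that $\image(\psi_G)$ equals the $R$-span of $\set{[e,ge]_G\given g\in\bigcup_{b\in\PP^1(E)\setminus\PP^1(\Q)}G_b}$. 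Finally I apply Lemma~\ref{S} with $S=\bigcup_{b\in\PP^1(E)\setminus\PP^1(\Q)}G_b$, whose generated subgroup $\gen{S}$ is by definition $\widetilde K(G)$; the lemma identifies the $R$-span of $\set{[e,se]_G\given s\in S}$ with the $R$-span of $\set{[e,ge]_G\given g\in\widetilde K(G)}$, which is the assertion of the corollary.

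The argument uses no genuinely new ideas beyond Theorem~\ref{psiform} and Lemma~\ref{S}; the only point requiring care is the bookkeeping with the Shapiro isomorphism, namely checking that the decomposition $H_1(G,C)\simeq\bigoplus_{b\in\cB}H_1(G_b,R_b)$ is compatible with the chain-level lift used to evaluate the connecting map. This compatibility is exactly the setup already laid out in the paragraphs preceding Theorem~\ref{psiform}, so the expected main obstacle is purely notational rather than mathematical.
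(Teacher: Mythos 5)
Your proposal is correct and follows exactly the route the paper intends: the corollary is stated without proof precisely because it is the combination of Theorem~\ref{psiform} (applied, via the Shapiro decomposition, to see that the image of $\psi_G$ is the $R$-span of $[e,\gamma e]_G$ for $\gamma$ in the stabilizers $G_b$) with Lemma~\ref{S} applied to $S=\bigcup_b G_b$. Your extra care in passing from orbit representatives $\cB$ to all $b$ and in checking chain-level compatibility with the Shapiro isomorphism is sound but does not constitute a different argument.
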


\section{The stabilizers \texorpdfstring{$\Gamma_\beta$}{Gammabeta}}\label{stab}

Let $(\beta:1)\in \PP^1(E) \setminus \PP^1(\Q)$.  Define the homomorphism 
$\rho_\beta \colon E^\times\to\GL_2(\Q)$ by
\[
\rho_\beta(x)
\begin{bmatrix}
\beta\\
1
\end{bmatrix}=
x
\begin{bmatrix}
\beta\\
1
\end{bmatrix}.
\]

Conversely, suppose $g\in\GL_2(\Q)$ stabilizes  $(\beta:1)\in \PP^1(E) \setminus \PP^1(\Q)$.  Then there exists $x(g)\in E^\times$ such that 
\[
g
\begin{bmatrix}
\beta\\
1
\end{bmatrix}=
x(g)
\begin{bmatrix}
\beta\\
1
\end{bmatrix}.
\]
Clearly the map $g\mapsto x(g)$ is an injective homomorphism from the stabilizer of $(\beta:1)$ in $\GL_2(\Q)$ to $E^\times$.  Therefore, this stabilizer is abelian.  We can say more about the intersection of this stabilizer with a subgroup of 
$\GL_2(\Z)$:

\begin{theorem}
Let $\epsilon$ be the fundamental unit of $E$.
Let $(\beta:1)\in \PP^1(E) \setminus \PP^1(\Q)$, and let $\Gamma\subset\GL_2(\Z)$ be a congruence subgroup. Let $\Gamma_\beta$ be the stabilizer of $\beta$ in $\Gamma$.
  The quotient $\Gamma_\beta/\set{\pm I}$ is the infinite  cyclic group  generated by the matrix 
$\gamma_\beta$ (modulo $\pm I$) such that 
\[
\gamma_\beta
\begin{bmatrix}
\beta\\
1
\end{bmatrix}=
\epsilon_\beta
\begin{bmatrix}
\beta\\
1
\end{bmatrix},
\]
where $\epsilon_\beta$ is the smallest positive power of $\epsilon$ such that 
 $\rho_\beta(\epsilon_\beta)\in\Gamma$.

\end{theorem}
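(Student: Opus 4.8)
The plan is to transport the statement into the unit group of an order in $E$, where it reduces to Dirichlet's unit theorem plus a little bookkeeping with signs.

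First I would check that $\rho_\beta$ is an \emph{isomorphism} from $E^\times$ onto $\Stab_{\GL_2(\Q)}((\beta:1))$. It extends to the $\Q$-algebra embedding $E\hookrightarrow M_2(\Q)$ given by multiplication in the $\Q$-basis $\set{\beta,1}$ of $E=\Q(\beta)$, its image stabilizes $(\beta:1)$, and $g\mapsto x(g)$ is a two-sided inverse: the relation $x(\rho_\beta(y))=y$ is the definition, while $\rho_\beta(x(g))=g$ because both are rational matrices, hence commute with $\Gal(E/\Q)$ on $E^2$, so, agreeing on $\mat{\beta\\1}$, they agree on the conjugate vector $\mat{\beta'\\1}$ as well, and the two span $E^2$ since $\beta\notin\Q$. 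Consequently $\Gamma_\beta=\rho_\beta(U)$ with $U:=\set{x\in E^\times\given\rho_\beta(x)\in\Gamma}$, and $\rho_\beta$ carries $U$ isomorphically onto $\Gamma_\beta$, matching $U\cap\set{\pm1}$ with $\Gamma_\beta\cap\set{\pm I}$. So it suffices to show that $U$ is infinite cyclic modulo $\set{\pm1}$, generated by the class of $\epsilon_\beta$.

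Next I would introduce $\O_\beta:=\set{x\in E\given\rho_\beta(x)\in M_2(\Z)}$. It is a subring of $E$ (the preimage of a subring under the ring map $\rho_\beta$), it contains $\Z$, and it is a full lattice in $E$ -- discrete because $M_2(\Z)$ is discrete in $M_2(\R)$, and of rank $2$ because every $x\in E$ has an integer multiple in it -- hence an order in $E$. Since $\rho_\beta(x)^{-1}=\rho_\beta(x^{-1})$, we get $x\in\O_\beta^\times\iff\rho_\beta(x)\in\GL_2(\Z)$, so $U\subset\O_\beta^\times$ because $\Gamma\subset\GL_2(\Z)$. The index is finite: picking $N$ with $\Gamma(N)\subset\Gamma$, reduction modulo $N$ is a homomorphism $\O_\beta^\times\to\GL_2(\Z/N\Z)$ whose kernel $\rho_\beta^{-1}(\Gamma(N))$ is contained in $U$, and the target is finite. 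Now Dirichlet's unit theorem for orders in the real quadratic field $E$ gives that $\O_\beta^\times$ has rank $1$; hence the finite-index subgroup $U$ has rank $1$ too, with torsion $U\cap\set{\pm1}$, so $\Gamma_\beta/\set{\pm I}\cong U/(U\cap\set{\pm1})$ is infinite cyclic. To pin down the generator, note that $U\subset\O_E^\times$ induces an injection $U/(U\cap\set{\pm1})\hookrightarrow\O_E^\times/\set{\pm1}=\gen{\bar\epsilon}$, whose image is $\gen{\bar\epsilon^{\,d}}$ for the least $d\ge1$ with $\pm\epsilon^{d}\in U$; one then wants $d=m$, where $m$ is the least positive integer with $\rho_\beta(\epsilon^m)\in\Gamma$, so that $\epsilon_\beta=\epsilon^m$ and $\gamma_\beta=\rho_\beta(\epsilon_\beta)$ maps to a generator.

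I expect this last identification of the generator to be the only delicate point, and the only place where the particular congruence subgroup $\Gamma$ really enters. When $-I\in\Gamma$ -- which covers $\Gamma_0(N)^\pm$, $\Gamma_0(N)$ and $\Gamma_1(N)^\pm$ -- it is automatic, since then $-1\in U$, so $U=-U$ and $d=m$. For $\Gamma$ not containing $-I$ one has to analyze directly, from the congruence conditions defining $\Gamma$, how the least positive power $\epsilon^{m}$ with $\rho_\beta(\epsilon^{m})\in\Gamma$ sits inside $\O_E^\times$ relative to an actual generator of $\Gamma_\beta/\set{\pm I}$ (a priori $-\epsilon^{d}$ could lie in $U$ with $\epsilon^{d}\notin U$ and $d<m$). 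Everything structural above -- the order $\O_\beta$, the finite index of $U$, and Dirichlet -- is routine.
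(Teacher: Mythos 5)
Your proposal is correct in substance but takes a genuinely different route from the paper's. The paper argues directly and elementarily: for $\gamma\in\Gamma_\beta$ the scalar $x$ with $\gamma\mat{\beta\\1}=x\mat{\beta\\1}$ satisfies $x^2-\tr(\gamma)x+\det(\gamma)=0$ with $\tr(\gamma)\in\Z$ and $\det(\gamma)=\pm1$, so $x\in\O_E^\times$ and $x=\pm\epsilon^k$; the remaining point --- that \emph{some} positive power of $\epsilon$ actually gives an element of $\Gamma$ --- is settled by an explicit lattice computation (write $\beta=\sigma/\tau$ with $\sigma,\tau\in\O_E$, choose $M$ with $M\O_E\subset\Z\sigma+\Z\tau$, use that multiplication by $\epsilon$ has finite order on the finite ring $\O_E/M\O_E$ to deduce $\rho_\beta(\epsilon^m)\in\GL_2(\Z)$, then pass to a further power to land in $\Gamma$ by finite index). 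You instead package everything into the order $\O_\beta=\rho_\beta\inv(M_2(\Z))$, identify $\Gamma_\beta$ with $U=\rho_\beta\inv(\Gamma)\subset\O_\beta^\times$, show $U$ has finite index via reduction mod $N$, and invoke Dirichlet's unit theorem for orders to get rank one. Both work; the paper's version is self-contained and avoids Dirichlet for non-maximal orders, while yours is more structural and makes the isomorphism $\Gamma_\beta\cong U$ and the cyclicity completely transparent. One small caution in your finite-index step: a unit in the kernel of $\O_\beta^\times\to\GL_2(\Z/N\Z)$ reduces to $I$, but to place it in $\Gamma(N)\subset\SL_2(\Z)$ you also need its determinant to be $+1$; this is automatic once $N\ge3$ (determinant $\pm1$ and $\equiv1\bmod N$), so just enlarge $N$ if necessary.

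The delicate point you flag at the end --- that when $-I\notin\Gamma$ the class of $\rho_\beta(\epsilon_\beta)$ might fail to generate $\Gamma_\beta/\set{\pm I}$ because a priori $-\epsilon^d$ could lie in $U$ with $\epsilon^d\notin U$ and $d<m$ --- is genuine, but you should know that the paper's proof does not address it either: after concluding $x=\pm\epsilon^k$ it simply declares ``the result is now clear'' apart from the existence of some power of $\epsilon$ in $U$. So on that one point your write-up is no less complete than the paper's, and it has the merit of naming the issue explicitly.
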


\begin{proof}
Consider 
\[
\gamma=
\begin{bmatrix}
A&B\\
C&D
\end{bmatrix}
\in\GL_2(\Z).
\]
Then $\gamma$ stabilizes $(\beta:1)$ if and only if there exists $x\in E$ such that
\[
\gamma
\begin{bmatrix}
\beta\\
1
\end{bmatrix}=
x
\begin{bmatrix}
\beta\\
1
\end{bmatrix}.
\]
Since $(\beta,1)$ is a $\Q$-basis of $E$, the minimal polynomial of $x$ is
\[x^2-\tr(\gamma)x+\det(\gamma)=x^2-(A+D)x + (\pm 1) = 0.\]
So $x$ is a unit in the ring of integers $\O_E$, and modulo $\pm 1$ it is equal to a power of $\epsilon$.  The result is now clear, except for showing that some power $\epsilon^k$ satisfies $\rho_\beta(\epsilon^k)\in\Gamma$.

Let $\rho=\rho_\beta(\epsilon)$.  We must show $\rho^k\in\Gamma$ for some $k$.
Write $\beta=\sigma/\tau$ for $\sigma,\tau\in \O$, the ring of integers in $E$.  We have 
\[
\rho
\begin{bmatrix}
\sigma\\
\tau
\end{bmatrix}=
\epsilon
\begin{bmatrix}
\sigma\\
\tau
\end{bmatrix}.
\]

Let $L$ be the $\Z$-lattice $\Z \sigma+\Z \tau $.  It  is in $\O$ and has $\Z$-rank 2 (because $\beta\not\in E$)
and therefore there exists some rational integer $M>0$ such that $\O\supset L\supset M\O$.  Then $\epsilon$ acts by multiplication on the finite set $\O/M\O$ and therefore for some $m>0$, $\epsilon^m\sigma\equiv \sigma$ and 
 $\epsilon^m\tau\equiv \tau$ modulo $M\O$.  Therefore 
 $(\epsilon^m-1)\sigma\in M\O\subset L$ and 
  $(\epsilon^m-1)\tau\in M\O\subset L$.  In other words
  \[
  (\rho^m-I)
\begin{bmatrix}
\sigma\\
\tau
\end{bmatrix}=
(\epsilon^m-1)
\begin{bmatrix}
\sigma\\
\tau
\end{bmatrix}=W
\begin{bmatrix}
\sigma\\
\tau
\end{bmatrix}
\]
for some $W\in M_2(\Z)$.  It follows that $\rho^m-I=W\in M_2(\Z)$ and therefore
$\rho^m\in M_2(\Z)$.  Because $\det\rho^m=\pm1$, we obtain that 
$\rho^m\in \GL_2(\Z)$.
 Then some further power of $\rho^m$ is in $\Gamma$ because $\Gamma$ contains $\Gamma(N)$ for some $N$ and $\GL_2(\Z)/\Gamma(N)$ is finite.
\end{proof}

\section{Steinberg homology and cuspidal Steinberg homology}\label{seccusp}

\begin{definition}  For a ring $R$, 
let $\DD(R)$ denote the group of divisors on $\PP^1(\Q)$ with coefficients in $R$.  For $v\in\PP^1(\Q)$, let $(v)$ denote the corresponding cusp.
Define the $\GL_2(\Q)$-module homomorphism 
$\partial\colon \St(\Q^2;R)\to \DD(R)$ by $\partial([u,v])=(v)-(u)$.  When $R$ is clear from the context,  we may write $\DD$ instead of $\DD(R)$.
\end{definition}

One checks easily that $\partial$ is well defined, using Theorem~\ref{thm:mod-props}.

Now fix a ring $R$.
For any $G\subset\GL_2(\Q)$, $\partial$ induces a map
\[
\partial \colon H_0(G,\St(\Q^2;R))=\St(\Q^2;R)_G\to H_0(G,\DD)=\DD_G.
\]

\begin{definition}\label{defcusp}
  The \emph{cuspial homology} $H_0^{\cusp}(G, \St(\Q^2;R))$ is the kernel of $\partial$.
\end{definition}

\begin{theorem}\label{thmcusp}
 Let $G$ be a subgroup
 of $\GL_2(\Q)$.
  \begin{enumerate}
  \item  The image of $\psi_G$ lies in $H_0^{\cusp}(G,\St(\Q^2;R))$.  
  \item The image of $\psi_G$ does not depend on the choice of a base point $e$.
If $e$ and $f$ are any two points in $\PP^1(\Q)$ and $\gamma\in G$, then
 $[e,\gamma e]_G= [f,\gamma f]_G$.
  %\item The image of $\psi_G$ is stable under the action of the Hecke operators.
  \end{enumerate}
\end{theorem}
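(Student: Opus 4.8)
For part (1), the plan is to compute $\partial\circ\psi_G$ directly on generators. By Theorem~\ref{psiform}, the image of $\psi_G$ is spanned by classes $[e,\gamma e]_G$ for $\gamma$ in various stabilizers $G_b$. So it suffices to show $\partial([e,\gamma e]_G)=0$ in $\DD_G$ for every $\gamma\in G$. From the definition of $\partial$, $\partial([e,\gamma e])=(\gamma e)-(e)$ in $\DD$, and passing to coinvariants $\DD_G$, the divisor $(\gamma e)$ becomes identified with $(e)$ since $\gamma\in G$. Hence $\partial([e,\gamma e]_G)=(\gamma e)_G-(e)_G=0$. This is essentially immediate; the only thing to be careful about is the compatibility of the induced map $\partial$ on $H_0$ with the coinvariants description, which follows because $\partial$ is a $\GL_2(\Q)$-module map and therefore descends to coinvariants for any subgroup.

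For part (2), I would exploit the homomorphism property of $\Psi_{K,L}$ from Lemma~\ref{Psilemma1} (with $K=L=G$), together with the relations in Theorem~\ref{thm:mod-props}. Fix $e,f\in\PP^1(\Q)$ and $\gamma\in G$. Using relation \ref{it:relations}(b) twice,
\[
[e,\gamma e] = [e,f] + [f,\gamma f] + [\gamma f,\gamma e].
\]
Now $[\gamma f,\gamma e] = \gamma[f,e] = -\gamma[e,f]$, so in the coinvariants $H_0(G,\St(\Q^2;R))$ we have $[\gamma f,\gamma e]_G = -[e,f]_G$. Substituting,
\[
[e,\gamma e]_G = [e,f]_G + [f,\gamma f]_G - [e,f]_G = [f,\gamma f]_G,
\]
which is exactly the claimed identity. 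Independence of the image of $\psi_G$ from the base point $e$ then follows: by Corollary~\ref{widetilde} the image is spanned by the $[e,ge]_G$ with $g\in\widetilde K(G)$, and each such class equals $[f,gf]_G$, so replacing $e$ by $f$ gives the same span (note $\widetilde K(G)$ itself does not depend on $e$).

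Neither part presents a genuine obstacle; this is a bookkeeping lemma. The one point requiring a little attention is making sure the formal manipulations of modular symbols are carried out at the level of $\St(\Q^2;R)$ (where relations \ref{it:relations}(a),(b) hold by Theorem~\ref{thm:mod-props}) before passing to $G$-coinvariants, and that the step $[\gamma f,\gamma e]_G=-[e,f]_G$ correctly combines the $\GL_2(\Q)$-action $g[v,w]=[gv,gw]$ with the definition of coinvariants. Everything else is a direct substitution.
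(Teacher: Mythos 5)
Your proposal is correct and follows essentially the same route as the paper: part (1) is the observation that $\partial[e,\gamma e]_G=(\gamma e)_G-(e)_G=0$ in $\DD_G$, and part (2) is the same three-term telescoping of the modular symbol followed by identifying $[\gamma f,\gamma e]_G$ with $-[e,f]_G$ in the coinvariants (the paper just writes the expansion starting from $[f,\gamma f]$ rather than $[e,\gamma e]$). No gaps.
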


\begin{proof} \mbox{}
  \begin{enumerate}
  \item  For any $\gamma \in G$,   $\partial [e,\gamma e]_G=\gamma e - e = 0$ in $\DD_G$.
  \item  For any $\gamma \in G$, \[[f,\gamma f]_G =
    [f,e]_G+[e,\gamma e]_G +[\gamma e,\gamma f]_G = [e,\gamma e]_G .\]
  \end{enumerate}
\end{proof}

\begin{lemma}\label{hcusp} 
  Let $G$ be a subgroup of $\GL_2(\Q)$, and let $\cC$ be a set of representatives of the $G$-orbits in $\PP^1(\Q)$ containing $e=(1:0)$.  Let $W$ be the span of the modular symbols of the form $[e,c]_G$ where $c$ runs over $\cC$, and let $V$ be the $R$-span of the modular symbols of the form $[e,ge]_G$ where $g$ runs over $G$. 
  Then
  \begin{equation}\label{eq:hcusp-a}
    H_0(G,\St(\Q^2;R))=H^{\cusp}_0(G,\St(\Q^2;R)) \oplus W
  \end{equation}
  and
  \begin{equation}\label{eq:hcusp-b}
    H^{\cusp}_0(G,\St(\Q^2;R)) = V.    
  \end{equation}
\end{lemma}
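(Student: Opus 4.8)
The plan is to analyze the boundary map $\partial\colon H_0(G,\St(\Q^2;R))\to \DD_G$ directly, using the modular-symbol presentation from Corollary~\ref{triv}. First I would observe that $\DD_G$ is the free $R$-module on the set $\cC$ of cusp orbits, and that the composite $R[\cC]\setminus\{(e)\}\hookrightarrow \DD\to \DD_G$ together with the augmentation identifies $\DD_G$ with $R\oplus (\text{span of }(c)-(e),\ c\in\cC\setminus\{e\})$; more precisely, the classes $(c)-(e)$ for $c\in\cC$ span $\DD_G$ and the element $(e)$ maps to a generator of the remaining rank-one piece. The key elementary fact is that $\partial[e,c]_G = (c)-(e)$, so $\partial$ maps $W$ onto the submodule of $\DD_G$ spanned by the $(c)-(e)$. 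To get the splitting \eqref{eq:hcusp-a}, I would show two things: (1) $\partial|_W$ is injective, so $W\cap H_0^\cusp=0$; and (2) $W+H_0^\cusp = H_0(G,\St(\Q^2;R))$, i.e. every class, after subtracting a suitable element of $W$, becomes cuspidal. For (2): an arbitrary class is an $R$-combination of symbols $[v,w]_G$; since $G$ acts transitively on each orbit, each such symbol can be rewritten (using Theorem~\ref{thm:mod-props}(b) and $[v,w]_G=[\gamma v,\gamma w]_G$) modulo $W$ so that its boundary becomes $0$, hence it lies in $H_0^\cusp$ after correction by an element of $W$. For (1): if $\sum r_c[e,c]_G$ is cuspidal then $\sum r_c((c)-(e))=0$ in $\DD_G$; here I need that the classes $(c)-(e)$, $c\in\cC\setminus\{e\}$, are $R$-linearly independent in $\DD_G$. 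This follows because $\DD_G=R[\cC]$ is free on $\cC$ (the $G$-orbits partition $\PP^1(\Q)$, and $\DD$ is free on $\PP^1(\Q)$, so coinvariants are free on orbits), and $\{(e)\}\cup\{(c)-(e):c\neq e\}$ is then a basis. Hence all $r_c=0$, giving injectivity of $\partial|_W$ and the direct-sum decomposition \eqref{eq:hcusp-a}.

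For \eqref{eq:hcusp-b} I would argue that $V\subseteq H_0^\cusp$ is already established (Theorem~\ref{thmcusp}(1), applied with $L=G=K$, or directly: $\partial[e,ge]_G = ge-e = 0$ in $\DD_G$). For the reverse inclusion $H_0^\cusp\subseteq V$, take a cuspidal class $\xi$ and, using \eqref{eq:hcusp-a}, also view it inside $H_0(G,\St)$; I want to write $\xi$ as an $R$-combination of symbols $[e,ge]_G$. The mechanism is: any symbol $[v,w]_G$ equals $[e,g_v{}^{-1}e]_G$-type pieces once one notes $[v,w]_G = [gv,gw]_G$ and picks $g$ moving $v$ to $e$; concretely, writing $v=g_v e$, $w=g_w e$ with $g_v,g_w\in G$ (possible exactly when $v,w$ lie in the orbit of $e$), we get $[v,w]_G = [g_v e, g_w e]_G = [e,g_v^{-1}g_w e]_G \in V$ after translating by $g_v^{-1}$. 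The subtlety is that a general symbol involves cusps $v,w$ not in the orbit of $e$. Here is where cuspidality is used: a cuspidal class can be represented by a $1$-cycle-like combination $\sum_i [v_i,w_i]_G$ whose total boundary $\sum_i((w_i)-(v_i))$ vanishes in $\DD_G$, which lets one rechain the symbols (again via relation (b)) into a combination supported on loops $[e,\gamma e]_G$; this is the standard argument that cuspidal homology is generated by "closed" modular symbols. Carrying this out carefully using Corollary~\ref{triv} is the technical heart.

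The main obstacle I anticipate is precisely the inclusion $H_0^\cusp\subseteq V$ in \eqref{eq:hcusp-b}: one must pass from "the formal boundary of a representing chain vanishes in $\DD_G$" to "the class is an actual $R$-combination of the loop symbols $[e,\gamma e]_G$," and this requires a careful bookkeeping argument that any $R$-relation among the cusps $(c)-(e)$ in the coinvariants $\DD_G$ can be realized by modular symbols lying in $V$ — i.e. that the natural surjection $\bigoplus_{c} R\cdot[e,c]_G \oplus V \twoheadrightarrow H_0(G,\St)$ restricts correctly. One clean way to organize this: let $D^0_G=\ker(\DD_G\to R)$ be the degree-zero divisors; then $\partial$ lands in $D^0_G$, $W$ maps isomorphically onto $D^0_G$ by the freeness computation above, so $\partial$ is surjective onto $D^0_G$ and $H_0^\cusp=\ker\partial$; counting ranks (or splitting via $W$) then forces $H_0(G,\St)=H_0^\cusp\oplus W$, and for \eqref{eq:hcusp-b} one shows $V$ surjects onto $H_0^\cusp$ by checking that modulo $W$ every generator $[v,w]_G$ of $H_0(G,\St)$ lies in $V$, which reduces to the orbit-translation identity above once $v,w$ are moved into the orbit of $e$ by adding correction terms from $W$. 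I would present it in that order: freeness of $\DD_G$; $\partial$ surjects onto $D^0_G$ with $W$ a section; deduce \eqref{eq:hcusp-a}; then the rechaining argument for \eqref{eq:hcusp-b}.
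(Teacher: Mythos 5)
Your proposal is correct and, in its final organization, is essentially the paper's proof: the paper shows $H_0(G,\St(\Q^2;R))=V+W$ via the identity $[e,b]_G=[e,he]_G+[e,c]_G$ (writing $b=hc$ with $h\in G$, $c\in\cC$) and shows $H_0^{\cusp}\cap W=0$ from the freeness of $\DD_G$ on the orbit set $\cC$, exactly as in your last paragraph, after which both \eqref{eq:hcusp-a} and \eqref{eq:hcusp-b} follow formally from $V\subseteq H_0^{\cusp}$. The ``rechaining of cuspidal cycles'' that you flag as the technical heart is an unnecessary detour: you never need to manipulate a cuspidal class directly, since once $H_0=V+W$ with $V\subseteq H_0^{\cusp}$ and $H_0^{\cusp}\cap W=0$, any $\xi\in H_0^{\cusp}$ written as $v+w$ forces $w=\xi-v\in H_0^{\cusp}\cap W=0$.
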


\begin{proof}
It is clear that $V$ is contained in $H^{\cusp}_0(G,\St(\Q^2;R))$.  
Therefore to prove both \eqref{eq:hcusp-a} and \eqref{eq:hcusp-b}, it suffices to show that   $H_0(G,\St(\Q^2;R))=V+W$ and $H^{\cusp}_0(G,\St(\Q^2;R)) \cap W = 0$.  

Now $H_0(G,\St(\Q^2;R))$ is generated over $R$ by symbols of the form 
$[e,b]_G$, where $b$ runs over $\PP^1(\Q)\setminus \set{e}$. 
Choose any $b$ and show that $[e,b]_G$ is in $V + W$:  
 
 Given $b$, there is an  $h$ in $G$ such that $b = hc$, for some $c$ in $\cC$. Then 
\[
[e,he]_G=[e,b]_G+[hc,he]_G=[e,b]_G+[c,e]_G.
\]
So
$[e,b]_G=[e,he]_G+[e,c]_G$ is in $V + W$.  Thus $H_0(G, \St(\Q^2;R))=V+W$.

Now suppose $w=\sum_{c\in \cC} r_c[e,c] \in H^{\cusp}_0(G,\St(\Q^2;R)) \cap W$, for some $r_c\in R$.  Since $[e,e]_G=0$, without loss of generality $r_e=0$.
Then
\[0=\partial w = \sum_{c\in \cC\setminus \set{e}} r_c((c)_G-(e)_G),\]
where $(x)_G$ denotes the cusp $x$ modulo $G$.  Since $\cC$ consists of $G$-inequivalent cusps, $r_c=0$ for all $c$ and $w=0$.
\end{proof}

Now let $\Gamma$ be a subgroup of finite index in $\GL_2(\Z)$.  Let $Y$ denote the upper half plane.

\begin{theorem}\label{free-6}
Let $R$ be a PID, and let $\Gamma$ be a subgroup of finite index in $\GL_2(\Z)$.
\begin{enumerate}
\item 
 $H_0(\Gamma, \St(\Q^2;R))$ and $H_0^{\cusp}(\Gamma, \St(\Q^2;R))$  are finitely generated $R$-modules.

\item The $R$-torsion in 
$H_0(\Gamma, \St(\Q^2;R))$ and $H_0^{\cusp}(\Gamma, \St(\Q^2;R))$ is annihilated by some power of $6$. 

\item If  $6$ is invertible in $R$, then $H_0(\Gamma, \St(\Q^2;R))$ and $H_0^{\cusp}(\Gamma, \St(\Q^2;R))$ are free $R$-modules of finite rank.
\end{enumerate}
\end{theorem}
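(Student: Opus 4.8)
The plan is to deduce everything from Brown's generalization of Borel--Serre duality (Theorem~\ref{BS}) together with standard finiteness properties of arithmetic groups, handling the prime $6$ separately. First I would establish part (1). Since $\Gamma$ has finite index in $\GL_2(\Z)$, it is an arithmetic group and hence of type $\mathrm{FP}_\infty$ over $\Z$; in particular $H_*(\Gamma,M)$ is finitely generated over $R$ whenever $M$ is a finitely generated $R[\Gamma]$-module. The module $\St(\Q^2;R)=\St(\Q^2)\otimes_\Z R$ is not finitely generated as an abelian group, but one argues via a spectral sequence or directly from the Borel--Serre compactification that $H_0(\Gamma,\St(\Q^2;R))$ is still finitely generated over $R$: concretely, $\St(\Q^2)$ is the $R$-dual situation of the chain complex computing the (co)homology of the bordification, and $H_*(\Gamma,\St(\Q^2;R))$ computes the homology of the pair $(\overline{Y/\Gamma},\partial)$ (Borel--Serre), which is a finite CW-complex. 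Then $H_0^{\cusp}$ is a submodule of a finitely generated module over the Noetherian ring $R$, hence finitely generated. Alternatively, using Lemma~\ref{hcusp}, $H_0^{\cusp}$ is a direct summand of $H_0$, so its finite generation follows once that of $H_0$ is known.

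Next, for parts (2) and (3), I would invoke Theorem~\ref{BS}: if $6$ is invertible in $R$ then $H_i(\Gamma,\St(\Q^2;R))\cong H^{1-i}(\Gamma,R)$ naturally for $i=0,1$. This immediately gives part (3) for $H_0$ once one knows $H^1(\Gamma,R)$ is free over $R$: since $R$ is a PID and $\Gamma$ is $\mathrm{FP}_\infty$, $H^*(\Gamma,R)$ is computed by a finite free $R$-complex, so $H^1(\Gamma,R)$ is finitely generated; its torsion-freeness follows because $H^1$ of a group with a finite free resolution over a PID, computed in the bottom relevant degrees, has torsion controlled by $H_0$, which is free (it is $R$, $\Gamma$ being one orbit of cusps$\ldots$ actually $H_0(\Gamma,R)=R$), so by universal coefficients $H^1(\Gamma,R)$ is free. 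Then freeness of $H_0^{\cusp}$ follows from \eqref{eq:hcusp-a}: it is a direct summand of a free $R$-module, hence projective, hence free over the PID $R$.

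For part (2), the strategy is to compare $R$ with $R[1/6]$. Let $M=H_0(\Gamma,\St(\Q^2;R))$ or $H_0^{\cusp}$. By part (3) applied to $R[1/6]$, $M\otimes_R R[1/6]\cong H_0(\Gamma,\St(\Q^2;R[1/6]))$ is free over $R[1/6]$, hence torsion-free; but $M\otimes_R R[1/6]$ is the localization of $M$ inverting $6$, so the $R$-torsion submodule of $M$ is killed by inverting $6$, i.e.\ every torsion element is annihilated by a power of $6$. Since $M$ is finitely generated by part (1), finitely many such annihilators suffice, so a single power of $6$ works. One must check the compatibility $H_0(\Gamma,\St(\Q^2;R))\otimes_R R[1/6]\cong H_0(\Gamma,\St(\Q^2;R[1/6]))$, which holds because $\St(\Q^2)$ is $\Z$-free and homology commutes with the flat base change $R\to R[1/6]$, and similarly for the cuspidal part using that $\partial$ and the splitting of Lemma~\ref{hcusp} are compatible with base change.

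The main obstacle is the first part: carefully justifying that $H_0(\Gamma,\St(\Q^2;R))$ is finitely generated over $R$ despite $\St(\Q^2;R)$ being an infinite-rank $R$-module. The clean route is Borel--Serre: the exact sequence \cite[(3.6) p.~280]{B} used to prove Theorem~\ref{BS} already identifies $H_*(\Gamma,\St(\Q^2)\otimes\det)$ (up to the $\det$ twist, which is irrelevant over $R$ with $6$ a unit, but here we want it for general $R$) with the homology of the pair $(\overline{Y/\Gamma},\partial\overline{Y/\Gamma})$, a finite CW-pair, whence finite generation over $R$ follows from the universal coefficient theorem; one then transfers to $\St(\Q^2;R)$ without the $\det$ twist by noting the two differ by an automorphism of order dividing $2$, which does not affect finite generation. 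Everything else is formal homological algebra over the PID $R$.
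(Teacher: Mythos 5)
Your parts (2) and (3) are correct but follow a genuinely different route from the paper's. For (3) the paper does not use universal coefficients: it observes that $Y/\Gamma$ has the homotopy type of a finite graph, so $H^1(Y/\Gamma,R)$ is free of finite rank, and that the equivariant spectral sequence \cite[(7.10) p.~174]{B} identifies this with $H^1(\Gamma,R)$ once $6$ is invertible; your algebraic substitute ($H^1(\Gamma,R)\cong\mathrm{Hom}_\Z(\Gamma^{\mathrm{ab}},R)$ by UCT since $H_0(\Gamma,\Z)=\Z$ is free, and a finitely generated torsion-free module over the PID $R$ is free) is equally valid. For (2) the paper argues from the exact sequence \eqref{eq:free-6}: the torsion of $H_0(\Gamma,\St(\Q^2;R))$ maps to the torsion of $H^1(\Gamma,R)$, which is killed by $6$, with kernel a quotient of the Farrell cohomology group $\widehat H^0(\Gamma,R)$, which is killed by a power of $6$. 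Your alternative --- base-change to $R[1/6]$ (again a PID, or the zero ring, in which case the claim is vacuous), apply (3) there, and use finite generation from (1) to extract a uniform power of $6$ --- is sound and arguably more transparent; note that for $H_0^{\cusp}$ you do not even need compatibility of the splitting with base change, since its torsion sits inside that of $H_0$.

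The soft spot is part (1). The assertion that $H_*(\Gamma,\St(\Q^2;R))$ ``computes the homology of the pair $(\overline{Y/\Gamma},\partial)$'' is false over a general $R$ for the groups at hand, which typically contain torsion (e.g.\ $-I$ and elliptic elements): the failure of exactly this identification at the primes $2$ and $3$ is why Theorem~\ref{BS} requires $6$ invertible, and why the paper routes part (1) through the sequence \eqref{eq:free-6}, whose outer terms are ordinary and Farrell cohomology of $\Gamma$ with trivial coefficients (both finitely generated) rather than the homology of a CW-pair. Your $\mathrm{FP}_\infty$ remark also misfires as stated, since finite generation of $M$ over $R[\Gamma]$ does not control $H_{*>0}(\Gamma,M)$. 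For $H_0$ specifically, however, the fix is immediate and needs no duality at all: by Theorem~\ref{by}, $\St(\Q^2)$ is generated as a $\Z[\GL_2(\Z)]$-module by the single unimodular symbol $\gen{e_1,e_2}$, so $\St(\Q^2;R)$ is generated over $R[\Gamma]$ by the finitely many translates $g_i\gen{e_1,e_2}$ with $g_i$ ranging over coset representatives, and the coinvariants $H_0(\Gamma,\St(\Q^2;R))$ are therefore generated over $R$ by the images of these. Finite generation of $H_0^{\cusp}$ then follows as you say, as a submodule (indeed a direct summand, by Lemma~\ref{hcusp}) of a finitely generated module over the Noetherian ring $R$.
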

 
 \begin{proof}
Since $Y(\Gamma)=Y/\Gamma$ has the homotopy type of a finite graph, $H^1(Y(\Gamma),R)$ is a  free $R$-module of finite rank.   
  The stabilizers of $\Gamma$ on $Y$ are finite groups of orders dividing 6.  It follows from the spectral sequence 
  \cite[(7.10) p. 174]{B} that the $R$-torsion in $H^1(\Gamma,R)$ is annihilated by $6$.
  
From the same spectral sequence, if $6$ is invertible in $R$, then  
  $H^1(Y(\Gamma),R)=H^1(\Gamma,R)$ so by Theorem~\ref{BS}, $H_0(\Gamma, \St(\Q^2;R))$ and therefore also $H_0^{\cusp}(\Gamma, \St(\Q^2;R))$ are free $R$-modules of finite rank.  This proves the third statement.
  
From~\cite[(3.6) p. 280]{B}, we have an exact sequence 
  \begin{equation} \label{eq:free-6}
  H^0(\Gamma,R)\to \widehat H^0(\Gamma,R) \to H_0(\Gamma, \St(\Q^2;R))
  \to   H^1(\Gamma,R),
  \end{equation}
  where $\widehat H$ denotes Farrell cohomology.  
  The cohomology of $\Gamma$ with trivial coefficients $R$ is a finitely generated $R$-module in each degree, and so is the Farrell cohomology.  (For the latter, use for example~\cite[Exercise (5)(b) p. 281.]{B})  This implies the first statement. 
  
  By \cite[Exercise (2) p. 280]{B}  and \cite[Lemma IX.9.2]{B}, we see that $\widehat H^0(\Gamma,R)$ is 
  annihilated by some power of $6$.   Together with the first paragraph of the proof, this implies the second statement.
   \end{proof}
 
In particular, if $R=\Z$, we have:
  \begin{corollary}\label{ztorsion}
The homology groups  $H_0(\Gamma, \St(\Q^2;\Z))$ and $H_0^{\cusp}(\Gamma, \St(\Q^2;\Z))$ are finitely generated 
 $\Z$-modules whose torsion modules are annihilated by a power of $6$.
   \end{corollary}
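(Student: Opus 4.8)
The plan is to deduce the corollary directly from Theorem~\ref{free-6} by specializing to $R=\Z$. Since $\Z$ is a PID, part~(1) of that theorem immediately yields that both $H_0(\Gamma,\St(\Q^2;\Z))$ and $H_0^{\cusp}(\Gamma,\St(\Q^2;\Z))$ are finitely generated $\Z$-modules, and part~(2), applied with $R=\Z$, says precisely that the $\Z$-torsion submodule of each is annihilated by some power of $6$. So no separate argument is required: the statement is literally the $R=\Z$ instance of the preceding theorem, and it suffices to invoke it.

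If one prefers to see the cuspidal case as following from the non-cuspidal one, one can also note that $H_0^{\cusp}(\Gamma,\St(\Q^2;\Z))$ is the kernel of $\partial$ inside $H_0(\Gamma,\St(\Q^2;\Z))$, hence a submodule of a finitely generated module over the Noetherian ring $\Z$, so it is itself finitely generated; and its torsion submodule sits inside the torsion submodule of $H_0(\Gamma,\St(\Q^2;\Z))$, so it too is killed by a power of $6$. This is merely a reorganization of the content already present in the proof of Theorem~\ref{free-6}, which in turn rests on Brown's exact sequence~\eqref{eq:free-6}, on $Y/\Gamma$ having the homotopy type of a finite graph, and on the stabilizers of $\Gamma$ acting on $Y$ having order dividing $6$.

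There is no real obstacle here: the corollary is an immediate specialization of Theorem~\ref{free-6}, and all the substantive work has already been carried out in that theorem's proof.
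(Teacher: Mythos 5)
Your proposal is correct and matches the paper exactly: the corollary is stated there as the immediate specialization of Theorem~\ref{free-6} to $R=\Z$ (``In particular, if $R=\Z$, we have:''), with no further argument. Invoking parts (1) and (2) of that theorem is all that is needed.
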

   
   \begin{remark}  
From Section~\ref{complex}, it follows that 
 $H_0^{\cusp}(\Gamma_0(N), \St(\Q^2;\Z))$ modulo torsion has rank $2g_0(N)$
and Corollary~\ref{dims} implies that  $H_0^{\cusp}(\Gamma_0(N)^\pm, \St(\Q^2;\Z))$ modulo torsion has rank $g_0(N)$, where $g_0(N)$ is the genus of the modular curve $X_0(N)$.  We observe these ranks in our computations, which gives a   check on their correctness.
\end{remark}
  
Recall Definition~\ref{Psidef}.

\begin{lemma}\label{Psilemma2}
Let $K\subset \GL_2(\Q)$ be a  subgroup.  Then the $R$-span of the image of 
 $\Psi_{K,K}:K\to H^{\cusp}_0(K, \St(\Q^2;R))$ is all of $H^{\cusp}_0(K, \St(\Q^2;R))$. 
\end{lemma}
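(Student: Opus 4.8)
The plan is to recognize this lemma as essentially a restatement of equation~\eqref{eq:hcusp-b} of Lemma~\ref{hcusp}, specialized to $G=K$. First I would unwind the definitions. By Definition~\ref{Psidef}, $\Psi_{K,K}(k)=[e,ke]_K$ for $k\in K$, so the $R$-span of the image of $\Psi_{K,K}$ is precisely the submodule $V$ of $H_0(K,\St(\Q^2;R))$ spanned by the symbols $[e,ge]_K$ with $g$ ranging over $K$, in the notation of Lemma~\ref{hcusp}.

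Next I would check the (trivial) point that $\Psi_{K,K}$ genuinely takes values in $H_0^{\cusp}(K,\St(\Q^2;R))$, so that the statement even typechecks: for any $k\in K$ we have $\partial[e,ke]_K=(ke)_K-(e)_K=0$, since $e$ and $ke$ represent the same cusp modulo $K$. (This is also recorded in Theorem~\ref{thmcusp} and in the opening line of the proof of Lemma~\ref{hcusp}.) Hence the $R$-span of the image of $\Psi_{K,K}$ is contained in $H_0^{\cusp}(K,\St(\Q^2;R))$, and it equals $V$.

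Finally, equation~\eqref{eq:hcusp-b} of Lemma~\ref{hcusp} applied with $G=K$ says exactly that $H_0^{\cusp}(K,\St(\Q^2;R))=V$. Combining the two identifications yields the claim. In other words, the lemma is a direct corollary of Lemma~\ref{hcusp}, and there is no genuine obstacle in the argument; the only things requiring (minimal) attention are the bookkeeping identification of the $R$-span of $\image(\Psi_{K,K})$ with the module $V$ of Lemma~\ref{hcusp}, and the observation that each $[e,ke]_K$ is cuspidal — both of which are immediate from the definitions.
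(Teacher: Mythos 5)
Your proposal is correct and matches the paper's argument exactly: the paper proves this lemma by citing equation~\eqref{eq:hcusp-b} of Lemma~\ref{hcusp} with $G=K$, which is precisely your route. The extra bookkeeping you supply (identifying the span of $\image(\Psi_{K,K})$ with the module $V$ and checking cuspidality of $[e,ke]_K$) is a harmless elaboration of what the paper leaves implicit.
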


\begin{proof}
 This follows immediately from \eqref{eq:hcusp-b} of Lemma~\ref{hcusp}.
\end{proof}

\begin{theorem}\label{Psi*}
Let $K\subset L\subset \GL_2(\Q)$ be subgroups, and let $\widehat K_L$ denote the normal closure of $K$ in $L$.  
Set $Q=L/\widehat K_L$ and let $R[\image(\Psi_{K,L})]$ denote the $R$-span of the image of $\Psi_{K,L}$.
Then the quotient 
\[
X=H^{\cusp}_0(L, \St(\Q^2;R))/R[\image(\Psi_{K,L})]
\]
 is isomorphic as $R$-module to a quotient of $Q^{ab}\otimes_\Z R$.
\end{theorem}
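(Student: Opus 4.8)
The plan is to produce a surjection $Q^{ab}\otimes_\Z R \twoheadrightarrow X$ by first defining a homomorphism on $L$ that kills $\widehat K_L$ and takes values in $X$, and then showing it is surjective.

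First I would recall from Lemma~\ref{Psilemma2} (applied with the group $L$ in place of $K$) that $H^{\cusp}_0(L,\St(\Q^2;R))$ is the $R$-span of the image of $\Psi_{L,L}\colon L\to H^{\cusp}_0(L,\St(\Q^2;R))$, $\ell\mapsto[e,\ell e]_L$. By Lemma~\ref{Psilemma1}, $\Psi_{L,L}$ is a group homomorphism, so composing with the quotient map gives a group homomorphism $\bar\Psi\colon L\to X$. The key observation is that $\bar\Psi$ kills $\widehat K_L$: indeed $\Psi_{K,L}$ is the restriction of $\Psi_{L,L}$ to $K$, so $\Psi_{L,L}(K)\subset R[\image(\Psi_{K,L})]$, whence $\bar\Psi(K)=0$; since $X$ is an $R$-module (in particular abelian) and $\bar\Psi$ is a homomorphism, the kernel of $\bar\Psi$ is a normal subgroup of $L$ containing $K$, hence contains $\widehat K_L$. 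Therefore $\bar\Psi$ factors through $Q=L/\widehat K_L$, and since $X$ is abelian it further factors through $Q^{ab}$, giving a group homomorphism $Q^{ab}\to X$. As $X$ is an $R$-module and $Q^{ab}$ is a $\Z$-module, this extends to an $R$-module homomorphism $Q^{ab}\otimes_\Z R\to X$.

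Next I would check surjectivity. The image of the induced map $Q^{ab}\otimes_\Z R\to X$ is the $R$-span of $\bar\Psi(L)$, which is the image in $X$ of the $R$-span of $\Psi_{L,L}(L)$; by Lemma~\ref{Psilemma2} that $R$-span is all of $H^{\cusp}_0(L,\St(\Q^2;R))$, so its image in $X$ is all of $X$. Hence the map is surjective, which is exactly the claim: $X$ is isomorphic to a quotient of $Q^{ab}\otimes_\Z R$.

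The only place requiring care — and the step I'd flag as the main subtlety rather than a real obstacle — is keeping the bookkeeping straight between $\Z$-spans and $R$-spans, and confirming that $R[\image(\Psi_{K,L})]$ is genuinely an $R$-submodule (so that $X$ is an $R$-module and the tensor-extension step is legitimate); this is immediate from the definition of $R[\,\cdot\,]$ as an $R$-span. One should also note that the factorization through $Q^{ab}$ uses only that $X$ is abelian, and the factorization through $\widehat K_L$ uses only that the kernel of a homomorphism to an abelian group is normal and contains $K$ — so no finiteness or arithmetic hypotheses on $K$, $L$ are needed, matching the generality of the statement. Everything else is a diagram chase with the homomorphism property from Lemma~\ref{Psilemma1} and the spanning statement from Lemma~\ref{Psilemma2}.
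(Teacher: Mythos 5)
Your proposal is correct and follows essentially the same route as the paper's proof: use Lemma~\ref{Psilemma1} to get a homomorphism $L\to X$, kill $\widehat K_L$ (you via normality of the kernel, the paper via the explicit identity $\Psi(\ell k\ell\inv)=\Psi(k)$ in the abelian target — the same observation), factor through $Q^{ab}$, tensor with $R$, and invoke Lemma~\ref{Psilemma2} for surjectivity.
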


\begin{proof}  
For brevity write $\Psi_{L,L}=\Psi$ and for any subgroup $A$ of $L$ write
 $\Psi_A=\Psi |_A=\Psi_{A,L}$.
Then $\Psi\colon L\to H_0^{\cusp}(L,\St(\Q^2;R))$ is given by 
$\Psi(g)=[e,ge]_L$.
By Lemma~\ref{Psilemma1}, $\Psi$ is a group homomorphism.
 
Consider the commutative diagram:
\[
\xymatrix{
K\ar[r]\ar[d]_{\Psi_K}&\widehat K_L\ar[r]\ar[dl]_{\Psi_{\widehat K_L}}&L\ar[r]\ar@{->>}[dll]^\Psi&Q\ar@{-->>}[ddlll]^\theta
\\
H_0^{\cusp}(L,\St(\Q^2;R))\ar@{->>}[d]^\pi
\\
X
}
\]
By Lemma~\ref{Psilemma2}, the image of $\Psi$ generates its target over $R$.  Therefore the $R$-span of the image of $\pi\circ\Psi$ equals
$X$.

 The group $\widehat K_L$ is generated by elements of the form $\ell k\ell\inv$ for 
$\ell\in L$ and $k\in K$.  Because the target of $\Psi$ is abelian, 
$\Psi(\ell k\ell\inv)=\Psi(k)$.   Thus the image of ${\Psi_{\widehat K_L}}$ and the image of $\Psi_K$ are the same.  

Define the map $\theta$ as follows:  Given any $\bar x\in Q$, let $x$ be a lift of it to $L$.  Set $\theta(\bar x)=\pi(\Psi(x))$.
  Since (by the preceding paragraph) $\pi\circ\Psi_{\widehat K_L}=0$, this does not depend on the choice of $x$.  Check that $\theta$ is a homomorphism:  we may choose lifts so that $\bar x\bar y$ is lifted to $xy$.  Then
\[\theta(\bar x\bar y)=(\pi\circ\Psi)(xy)=(\pi\circ\Psi)(x)+(\pi\circ\Psi)(y)=\theta(\bar x)+\theta(\bar y).\]  

The homomorphism $\theta$ factors through $Q^{ab}$ because the target is abelian.  Because $\theta=\pi\circ\Psi$, the $R$-span of the image of $\theta$ is all of $X$.  Therefore $\Z$-module map 
$\theta^{ab} \colon Q^{ab}\to X$ extends to an $R$-module map
$\theta^{ab}\otimes_\Z R \colon Q^{ab}\otimes_\Z R\to X$ which is surjective.

\end{proof}

\section{Unital matrices}\label{un}

To determine the image of $\psi_\Gamma$, we need to understand the group generated by the stabilizers $\Gamma_\beta$ (whose members we call ``unital matrices'') and the triangular matrices in $\Gamma$.  In this section we begin a detailed investigation of the unital matrices in $\Gamma$.

Fix a real quadratic field $E=\Q(\sqrt\Delta)$ with 
$\Delta$ a squarefree positive integer and with fundamental unit 
$\epsilon$.  
Let the ring of integers $\O_E$ be generated over $\Z$ by $1$ and 
$\omega$.
 Let a superscript prime denote the Galois conjugate of an element in $E$.

\begin{definition}\label{unital}
Let $\beta\in E\setminus\Q$.
We say that $\gamma\in\GL_2(\Z)$ is \emph{$\beta$-unital} if 
$\gamma(\beta:1)=(\beta:1)$.  If $\gamma$ is 
$\beta$-unital for some $\beta\in E\setminus\Q$, we say $\gamma$ is \emph{unital}.
\end{definition}

By Section~\ref{stab}, $\gamma$ is $\beta$-unital if and only if there exists an integer $k$ such that 
\[
\gamma\begin{bmatrix}
\beta\\
1
\end{bmatrix}
=\pm\epsilon^k\begin{bmatrix}
\beta\\
1
\end{bmatrix}.
\]
In this case, $\gamma$ equals $\pm\gamma_\beta^k$.

\begin{lemma}\label{conjugate}
Let $\gamma_1,\gamma_2$ and $\gamma$ be $\beta$-unital, and let $g\in\GL_2(\Q)$.  Then 
\begin{enumerate}
\item $\gamma\inv$ and $\gamma_1\gamma_2$ are  $\beta$-unital; 
\item $h=g\gamma g\inv$ is $\beta^*$-unital, where $g(\beta:1)=(\beta^*:1)$.
\end{enumerate}
\end{lemma}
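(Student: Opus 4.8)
The plan is to deduce everything from one elementary fact: for a fixed $\beta\in E\setminus\Q$, the stabilizer $S_\beta=\set{g\in\GL_2(\Q)\given g(\beta:1)=(\beta:1)}$ of $(\beta:1)$ in $\GL_2(\Q)$ is a subgroup — this is exactly the abelian group analyzed in Section~\ref{stab}, which injects into $E^\times$ via $g\mapsto x(g)$ — together with the remark that, by Definition~\ref{unital}, a matrix is $\beta$-unital precisely when it lies in $S_\beta\cap\GL_2(\Z)$.

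For the statement about $\gamma\inv$ and $\gamma_1\gamma_2$: the set $S_\beta\cap\GL_2(\Z)$ is an intersection of two subgroups of $\GL_2(\Q)$, hence itself a group, so it is closed under inverses and products; since $\gamma,\gamma_1,\gamma_2$ all lie in it, so do $\gamma\inv$ and $\gamma_1\gamma_2$. (Alternatively one could use the normal form recalled at the start of the section, $\gamma=\pm\gamma_\beta^k$ and $\gamma_i=\pm\gamma_\beta^{k_i}$, and compute $\gamma\inv=\pm\gamma_\beta^{-k}$, $\gamma_1\gamma_2=\pm\gamma_\beta^{k_1+k_2}$; but the group-theoretic argument is shorter and needs no structure theory.)

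For the statement about $h=g\gamma g\inv$: first I would check that $\beta^*\in E\setminus\Q$, so that ``$\beta^*$-unital'' is meaningful. Writing $g=\mat{a&b\\c&d}$ with $a,b,c,d\in\Q$, we get $\beta^*=(a\beta+b)/(c\beta+d)$, which lies in $E$ since $E$ is a field containing $\beta$ (and $c\beta+d\ne0$, as $\beta\notin\Q$); and $\beta^*\notin\Q$, since otherwise the same reasoning applied to $g\inv$ would force $\beta=g\inv(\beta^*)\in\Q$. Next, a one-line computation of the fractional linear action gives $h(\beta^*:1)=g\gamma g\inv(\beta^*:1)=g\gamma(\beta:1)=g(\beta:1)=(\beta^*:1)$, so $h\in S_{\beta^*}$; equivalently $gS_\beta g\inv=S_{\beta^*}$. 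Combined with $h\in\GL_2(\Z)$, this gives that $h$ is $\beta^*$-unital.

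The only point I expect to need any care — indeed the sole step beyond unwinding definitions — is the membership $h\in\GL_2(\Z)$: conjugation by a general element of $\GL_2(\Q)$ need not preserve $\GL_2(\Z)$, so this must come either from $g$ itself being integral (where it is immediate) or from already knowing that $g\gamma g\inv$ has integer entries. Everything else in the lemma is purely formal.
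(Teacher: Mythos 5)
Your proof is correct and follows essentially the same route as the paper's: part (i) comes from the fact, established in Section~\ref{stab}, that the stabilizer of $(\beta:1)$ in $\GL_2(\Q)$ is a group, and part (ii) is the same one-line computation $h(\beta^*:1)=g\gamma g\inv g(\beta:1)=g\gamma(\beta:1)=g(\beta:1)=(\beta^*:1)$. Your closing caveat is well taken and is in fact a point the paper's proof passes over in silence: membership $h\in\GL_2(\Z)$ is not automatic for $g\in\GL_2(\Q)$, and in the one application of the lemma with a non-integral $g$ (the conjugation by $\diag(1,N/(2f))$ in the proof of Theorem~\ref{main2}) the integrality of the conjugate has to be, and is, secured by separate divisibility information.
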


\begin{proof}
The first statement follows immediately from Section~\ref{stab}.  For the second statement, note that 
\[
h(\beta^*:1)=g\gamma g\inv g(\beta:1)=g\gamma(\beta:1)=
g(\beta:1)=(\beta^*:1)
.\]
\end{proof}
Clearly, $h$ and $g$ correspond to the same power of the fundamental unit $\pm\epsilon^k$.

\begin{theorem}\label{main1}
Let  $p$ be an odd prime number such that $(p)=\fp\fp'$ splits in $E$ and $a\in\Z$ prime to $p$ such that $a$ is congruent to some unit in $\O_E^\times$ modulo $\fp$.  Then there exists 
$\beta\in E\setminus \Q$ depending only on $p$ and $\Delta$,
and a $\beta$-unital matrix $M(a)\in\GL_2(\Z)$ of the form
\[
M(a)=\begin{bmatrix}
a+ps&*\\
pt&*
\end{bmatrix}.
\]
Moreover, for $\lambda=e+f\sqrt\Delta\in\O_E^\times$ such that 
$\lambda\equiv a \pmod \fp$, we may find $M(a)$ such that its determinant equals the norm of $\lambda$, 
$t=2f$ and $a+ps=x_0f+e$ for some integer $x_0$, which is odd if both $e$ and $f$ are half-integers.
\end{theorem}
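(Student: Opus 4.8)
The plan is to produce $M(a)$ directly as the matrix $\rho_\beta(\lambda)$ of multiplication by $\lambda$ on $E$ in a well-chosen $\Q$-basis, where $\rho_\beta$ is the homomorphism of Section~\ref{stab}. Since $\rho_\beta(\lambda)$ fixes $(\beta:1)$ by construction and has determinant $N(\lambda)=\pm1$, once we verify that it has integer entries it lies in $\GL_2(\Z)$ and is $\beta$-unital (indeed equal to $\pm\gamma_\beta^k$ when $\lambda=\pm\epsilon^k$).

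I would first fix $\beta$, independently of $a$. Let $\mu\in\Z$ be the square root of $\Delta$ modulo $p$ attached to $\fp$, i.e.\ a lift of the image of $\sqrt\Delta$ under $\O_E\to\O_E/\fp\cong\F_p$, so $\mu^2\equiv\Delta\pmod p$. Since $p$ is odd, the Chinese Remainder Theorem produces an integer $b_0$ with $b_0\equiv\mu\pmod p$ and $b_0\equiv\Delta\pmod 2$; reducing modulo $2p$ makes the choice canonical, depending only on $p$, $\Delta$ and the distinguished prime $\fp$. Set $\beta=(b_0+\sqrt\Delta)/(2p)\in E\setminus\Q$ and $x_0=b_0$.

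Given $a$, take $\lambda=e+f\sqrt\Delta\in\O_E^\times$ with $\lambda\equiv a\pmod\fp$ as furnished by the hypothesis, and compute $M(a):=\rho_\beta(\lambda)$ by substituting $\sqrt\Delta=2p\beta-b_0$ and expanding $\lambda\cdot 1$ and $\lambda\cdot\beta$ in the basis $\{1,\beta\}$. One gets
\[
M(a)=\begin{bmatrix} e+fb_0 & \frac{f(\Delta-b_0^2)}{2p}\\ 2pf & e-fb_0 \end{bmatrix},\qquad \det M(a)=e^2-f^2\Delta=N(\lambda)=\pm1.
\]
From here the structural claims are immediate: the bottom-left entry is $p\cdot(2f)$, so $t=2f\in\Z$; the top-left entry equals $b_0 f+e=x_0 f+e$; and reducing $\lambda\equiv a\pmod\fp$ gives $e+f\mu\equiv a\pmod p$, hence (since $b_0\equiv\mu$) the top-left entry is $\equiv a\pmod p$ and so equals $a+ps$ for an integer $s$. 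If $e,f$ are half-integers then $\Delta\equiv1\pmod4$ is odd, so $x_0=b_0\equiv\Delta$ is odd, as claimed.

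The one genuine verification --- and the step I expect to be the main obstacle --- is that $M(a)\in M_2(\Z)$, i.e.\ that $e+fb_0$ and $f(\Delta-b_0^2)/(2p)$ are integers; the displayed determinant then upgrades $M(a)$ to $\GL_2(\Z)$. This is a short case analysis according to whether $e,f\in\Z$ (with $f$ even or odd) or $e,f\in\frac12+\Z$, resting on three facts: $p\mid\Delta-b_0^2$, because $\mu^2\equiv\Delta\pmod p$; $\Delta-b_0^2$ is even, because $b_0\equiv\Delta\pmod 2$; and, in the half-integer case, $b_0$ odd forces $b_0^2\equiv1\equiv\Delta\pmod 4$, so that $4\mid\Delta-b_0^2$ --- exactly enough to absorb the extra factor of $2$ in the denominators of $e$ and $f$. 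Bookkeeping with $2e,2f\in\Z$ (both odd in the half-integer case, where $b_0$ is also odd) handles $e+fb_0$, and writing $\Delta-b_0^2=pm$ reduces the other entry to $fm/2$, which is an integer in each case by the three facts above. This completes the proof.
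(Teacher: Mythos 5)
Your proposal is correct and is essentially the paper's own proof: you choose the same $\beta=(x_0+\sqrt\Delta)/(2p)$ with $x_0\equiv\sqrt\Delta\pmod\fp$ and $x_0\equiv\Delta\pmod 2$, arrive at the identical matrix (the paper obtains it by conjugating $\diag(\lambda,\lambda')$ by $\left[\begin{smallmatrix}\beta&\beta'\\1&1\end{smallmatrix}\right]$ rather than by expanding multiplication by $\lambda$ in the basis $\{\beta,1\}$, but the entries $e+fx_0$, $f(\Delta-x_0^2)/(2p)$, $2pf$, $e-fx_0$ agree), and run the same integrality and congruence checks. No gaps.
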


\begin{proof}  
Let an unknown unital $\gamma$ be
\[
\gamma=\begin{bmatrix}
A&B\\
C&D
\end{bmatrix}
\]
whose unknown $\beta$ is 
\[
\beta=x+y\sqrt\Delta
\]
with $x,y\in\Q$.

Fix $\lambda\in\O_E^\times$ such that 
$\lambda\equiv a \bmod \fp$. Write
\[
\lambda=e+f\sqrt\Delta
\]
with $e,f\in\Z$, unless $\Delta\equiv 1 \bmod 4$, in which case $e,f$ may be a pair of half-integers. 

We solve
\[
\gamma\begin{bmatrix}
\beta\\
1
\end{bmatrix}
=\lambda\begin{bmatrix}
\beta\\
1
\end{bmatrix}
\]
with the added constraint that $p|C$ and $A\equiv a \bmod p$.  Note that the determinant of $\gamma$ equals the norm of $\lambda$.  After we solve this problem, we will take $M(a)=\gamma$.

This equation is equivalent to 
\[
\begin{bmatrix}
\beta&\beta'\\
1&1
\end{bmatrix}
\begin{bmatrix}
\lambda&0\\
0&\lambda'
\end{bmatrix}
\begin{bmatrix}
\beta&\beta'\\
1&1
\end{bmatrix}\inv=
\begin{bmatrix}
A&B\\
C&D
\end{bmatrix}
\] 
or
\[
\frac{1}{\beta-\beta'}
\begin{bmatrix}
\lambda\beta-\lambda'\beta'&-(\lambda-\lambda')N(\beta)\\
\lambda-\lambda'&-\lambda\beta'+\lambda'\beta
\end{bmatrix}=
\begin{bmatrix}
A&B\\
C&D
\end{bmatrix}
\]
where $N$ denotes the norm from $E$ to $\Q$.

The left-hand side is in $\GL_2(\Q)$  and has determinant $\pm1$.  We need to choose 
$\beta$ so that $A,B,C,D$ are integers, $p|C$ and $A\equiv a \bmod p$.  

Note that if $z=s+t\sqrt\Delta$ then $z'=s-t\sqrt\Delta$ and $z-z'=2t\sqrt\Delta$ and $Nz=s^2-t^2\Delta$.  Thus we have
\[
\beta-\beta'=2y\sqrt\Delta,\ \ \lambda-\lambda'=2f\sqrt\Delta,\ \ N(\beta)=x^2-y^2\Delta
\]
and
\[
\beta\lambda=(ex+fy\Delta)+(xf+ey)\sqrt\Delta,\ \ \beta\lambda-\beta'\lambda' = 
2(xf+ey)\sqrt\Delta
\]
and 
\[
\beta\lambda'=(ex-fy\Delta)+(-xf+ey)\sqrt\Delta,\ \ \beta\lambda'-\beta'\lambda=
2(-xf+ey)\sqrt\Delta.
\]

Our requirements, expressed in terms of $x,y,e,f$ become
\begin{align*}
A&=\frac{\beta\lambda-\beta'\lambda' }{\beta-\beta'}=\frac{xf+ey}{y}=\frac{xf}{y}+e
\in\Z \quad {\text{and}} \quad A \equiv a \bmod p;\\
C&=\frac{\lambda-\lambda'}{\beta-\beta'}=\frac{f}{y}\in\Z \quad {\text{and}} \quad   C \equiv 0 \bmod p;\\
B&=-N(\beta)C=-(x^2-y^2\Delta)\frac{f}{y}\in\Z; \quad \text{and}\\
D&=\frac{\beta\lambda'-\beta'\lambda }{\beta-\beta'}=\frac{-xf+ey}{y}\in\Z.
\end{align*}

We have fixed  a unit $\lambda=e+f\sqrt\Delta\in\O_E^\times$ such that 
$\lambda-a\in\fp$.   Choose $y=1/(2p)$, $x=x_0/(2p)$  where $x_0\in\Z$ satisfies  $\sqrt\Delta - x_0\in \fp$.  Such $x_0$ exists because $p$ is split in $E$.  We can always add $p$ to $x_0$, so without loss of generality, we may assume that $x_0$ has the same parity as $\Delta$. 

Now $C=2pf$ is an integer and divisible by $p$.  

Next $B=-\frac{x_0^2-\Delta}{2p}f$.  This
 is an integer because 1) $x_0$ and $\Delta$ have the same parity; 2)
$p$ divides  $x_0^2-\Delta$;
and 3) if $f$ is a half-integer,  $\Delta\equiv1\ $(mod 4), $x_0$ is odd, and so
$4$ divides  $x_0^2-\Delta$.

Now we come to $A=x_0f+e$.  This is an integer because $e$ and $f$ are either both integers or both half integers and in the latter case $\Delta$ is odd and therefore $x_0$ is odd.  We need $A$ to be congruent to $a$ modulo $p$.  We have chosen $\lambda=e+f\sqrt\Delta$ such that 
$\lambda-a\in\fp$.  
Now $\sqrt\Delta\equiv x_0\bmod \fp$ and $\lambda\equiv a\bmod\fp$.  Therefore 
$e+x_0f-a\equiv 0\bmod \fp$.  But $e+x_0f-a$ is an integer, and 
it is contained in $\fp$.  Since $\fp \cap \Z=(p)$, $A$ is congruent to $a$.

Then we see that $D=-x_0f+e$ is also an integer, because $D+A=2e\in\Z$.  

We have found a unital matrix $M(a)$ of the form 
\[
\begin{bmatrix}
a+ps&B\\
pt&D
\end{bmatrix}
\]
for some integers $s$ and $t$.  Note that $pt=C=p(2f)$ and $A=x_0f+e$.  Its determinant is the norm of $\lambda$.

To see that $\beta$ is independent of $a$, notice that $x$ and $y$ depend only on $p$ and $\Delta$.
\end{proof}

\begin{definition}\label{N2}
Given a positive integer $M$,
let $U_M^\pm$ denote the set of units $\lambda\in \O_E^\times$ which are congruent to $\pm1$ modulo $M$.  That is to say, $\lambda\in\pm1+M\O_E$.
\end{definition}

\begin{definition}\label{tri}
Say that $\gamma\in\GL_2(\Q)$ is \emph{triangular} if 
\[
\gamma=
\begin{bmatrix}
\pm1&0\\
*&\pm1
\end{bmatrix}\  \ \   { \text{or}} \ \ \ \ 
\begin{bmatrix}
\pm1&*\\
0&\pm1
\end{bmatrix}.
\]
\end{definition}

\begin{definition}\label{N3}
For any subgroup $\Gamma$ of $\GL_2(\Q)$, let $K(\Gamma,E)$ be the group generated by  
 all unital matrices in $\Gamma$ and all triangular matrices in 
 $\Gamma$.  If $E$ is understood, we just write $K(\Gamma)$.

\end{definition}

\begin{theorem}\label{main2}
Let $N$ be a positive integer, and
 let  $p$ be an odd prime number not dividing $N$ and such that $(p)=\fp\fp'$ splits in $E$ and the reduction of 
$U_{4N}^\pm$ modulo $\fp$  equals
$(\O_E/\fp)^\times\simeq(\Z/p\Z)^\times$. Let $m\in\Z$ be prime to $2p$ and such that $m\equiv\pm1$ (mod $N$).   Then  
 there exists $M_N(m,p)$ in $K(\Gamma_1(N)^\pm)$ such that
\[
M_N(m,p)=\begin{bmatrix}
m&b\\
pN&d
\end{bmatrix} 
\]
for some $b,d\in\Z$.
\end{theorem}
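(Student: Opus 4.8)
The plan is to manufacture $M_N(m,p)$ from the $\beta$-unital matrix supplied by Theorem~\ref{main1} and then reshape it using triangular matrices, all of which will lie in $\Gamma_1(N)^\pm$ and hence in $K(\Gamma_1(N)^\pm)$. Since $m$ is prime to $p$ and the reduction of $U_{4N}^\pm$ modulo $\fp$ is all of $(\O_E/\fp)^\times$, I would first choose a unit $\lambda=e+f\sqrt\Delta\in U_{4N}^\pm$ with $\lambda\equiv m\pmod{\fp}$, noting that there are infinitely many such $\lambda$ (the units in $U_{4N}^\pm$ congruent to $1$ mod $\fp$ form an infinite subgroup) and keeping that freedom in reserve. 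Applying Theorem~\ref{main1} with $a=m$ gives a $\beta$-unital matrix
\[
M(m)=\mat{m+ps & B\\ pt & D}\in\GL_2(\Z),\qquad t=2f,\quad m+ps=x_0f+e,
\]
of determinant $N(\lambda)$. Because $\lambda\in U_{4N}^\pm$ one has $N(\lambda)\equiv 1\pmod{4N}$ while $N(\lambda)=\pm1$, so $\det M(m)=1$; moreover $\lambda\equiv\pm1\pmod{4N\O_E}$ forces $f\equiv 0\pmod{2N}$, whence the lower-left entry $pt=2pf$ is divisible by $pN$ (in particular by $N$), and $m+ps=x_0f+e\equiv e\equiv\pm1\pmod N$. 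Thus $M(m)\in\Gamma_1(N)^\pm$, and being unital it lies in $K(\Gamma_1(N)^\pm)$.

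It then remains to pass from $M(m)$ to a matrix of the exact shape $\mat{m&b\\pN&d}$ using only generators of $K(\Gamma_1(N)^\pm)$. The matrices $\mat{1&*\\0&1}$, $\mat{1&0\\N*&1}$, and $\mat{\pm1&0\\0&\pm1}$ are triangular elements of $\Gamma_1(N)^\pm$, so all lie in $K(\Gamma_1(N)^\pm)$. Left- and right-multiplying $M(m)$ by $\mat{1&0\\Nc_1&1}$ and $\mat{1&0\\Nc_2&1}$ changes the lower-left entry to $N\bigl(c_1(m+ps)+c_2D+Nc_1c_2B+pt/N\bigr)$; using $\det M(m)=1$ one finds that the requirement ``lower-left entry $=pN$'' is equivalent to the divisibility $(D+Nc_1B)\mid(NBp+1)$ for a suitable $c_1\in\Z$. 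Once the lower-left entry equals $pN$, a single left multiplication by some $\mat{1&*\\0&1}$ brings the upper-left entry down to exactly $m$ — legitimate, since that entry is automatically $\equiv m\pmod{pN}$, being $\equiv m\pmod p$ and $\equiv\pm1\equiv m\pmod N$ — and the two residual signs are then fixed with the diagonal sign matrices. The coprimalities $\gcd(D,NB)=1$ and $\gcd(m+ps,pN)=1$, which follow from $\det M(m)=1$ and from $M(m)\in\Gamma_1(N)^\pm$, are exactly what make the cleanup feasible; but the divisibility $(D+Nc_1B)\mid(NBp+1)$ need not hold for the first $\lambda$ one picks, so the remaining freedom in the choice of $\lambda$ (and of $x_0$ in Theorem~\ref{main1}) has to be spent to force it.

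I expect this cleanup to be the main obstacle: bringing the lower-left entry down from a multiple of $4pN$ to exactly $pN$. A naive Euclidean reduction does not suffice, since for $N\ge 3$ the group generated by $\mat{1&1\\0&1}$ and $\mat{1&0\\N&1}$ has infinite index in $\SL_2(\Z)$, so the unital matrix must be used essentially (through its powers and its $K$-conjugates) and the arithmetic arranged with care. An alternative route, which avoids Theorem~\ref{main1} altogether, is to seek $b,d$ with $md-bpN=\delta\in\{\pm1\}$ so that the trace $m+d$ equals $\tr_{E/\Q}(\nu)$ for some unit $\nu\in\O_E^\times$ with $N(\nu)=\delta$ and $\nu\ne\pm1$: then the eigenvalues of $\mat{m&b\\pN&d}$ are $\nu$ and $\bar\nu$, its fixed points lie in $\PP^1(E)\setminus\PP^1(\Q)$, so it is unital, and since its upper-left entry is $m\equiv\pm1\pmod N$ and its lower-left entry $pN\equiv 0\pmod N$, it automatically lies in $\Gamma_1(N)^\pm$ and hence in $K(\Gamma_1(N)^\pm)$. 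Here the hypothesis on $U_{4N}^\pm$ supplies $\nu$ with the correct residue modulo $\fp$ (equivalently, with $m+d$ in the correct class modulo $p$), and the residual difficulty is to match the class of $m+d$ modulo $N$ and the norm $\delta$ — again the delicate part.
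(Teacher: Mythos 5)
Your setup is exactly the paper's: take $\lambda\in U_{4N}^\pm$ with $\lambda\equiv m\pmod{\fp}$, feed $a=m$ into Theorem~\ref{main1}, observe $N(\lambda)=1$, and use $2N\mid f$ to conclude the resulting unital matrix already lies in $\Gamma_1(N)^\pm$. But the proof then stalls at precisely the point you flag as "the main obstacle": reducing the lower-left entry from $2pf$ (a multiple of $4pN$) to exactly $pN$. Your proposed mechanism — left/right multiplication by lower-triangular unipotents of $\Gamma_1(N)^\pm$, with a divisibility condition $(D+Nc_1B)\mid(NBp+1)$ to be forced by varying $\lambda$ and $x_0$ — is never shown to be satisfiable, and your own remark that $\gen{\mat{1&1\\0&1},\mat{1&0\\N&1}}$ has infinite index in $\SL_2(\Z)$ is evidence that no such purely triangular cleanup can work in general. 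The alternative route at the end (directly hunting for a unital matrix of the desired shape) is likewise left open at "the delicate part." So the argument as written has a genuine gap.

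The missing idea is that you are not restricted to multiplying by elements of $K(\Gamma_1(N)^\pm)$: by Lemma~\ref{conjugate}, conjugating a unital matrix by \emph{any} element of $\GL_2(\Q)$ yields another unital matrix (for the transformed $\beta$). The paper conjugates $M(m)$ by the rational diagonal matrix $\diag(1,N/(2f))$, which sends
\[
\mat{m+ps&B\\2pf&D}\longmapsto \mat{m+ps&2fB/N\\pN&D},
\]
rescaling the lower-left entry to exactly $pN$ in one stroke; integrality is automatic since $N\mid f$. No divisibility condition on $D$ or $B$ needs to be engineered. The upper-left entry is then corrected by a single left multiplication by $\mat{1&-s/N\\0&1}$, which is legitimate because $ps=x_0f+(e-m)$ with $N\mid f$ and $N\mid(e-m)$ forces $N\mid s$ (as $\gcd(p,N)=1$) — a fact your write-up does not extract, but which is what makes the unipotent correction land exactly on $m$ rather than merely on something congruent to $m$ modulo $pN$. (The residual sign issue, when $\lambda\equiv-m\bmod N$, is handled by replacing $M^*_N(-m,p)$ with $-sM^*_N(-m,p)s$, $s=\diag(1,-1)$.)
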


\begin{proof}
Choose a unit  $\lambda\in U_{4N}^\pm$ congruent to $m$ modulo $\fp$.   
The norm of $\lambda$ is congruent to $1$ modulo $4$ and therefore equals $1$.
Both $\lambda$ and $m$ are congruent to $\pm 1$ modulo $N\O_E$.  First,  assume that 
$\lambda\equiv m\bmod N\O_E$.  
Since $p$ and $N$ are relatively prime, we have
$\lambda\equiv m\bmod N\fp$.  

By theorem~\ref{main1}, there exists a unital matrix 
\[
M=
\begin{bmatrix}
m+ps&B\\
p(2f)&D
\end{bmatrix}
\in \GL_2(\Z),\]
where $\lambda = e + f \sqrt\Delta$ and $m+ps=x_0f+e\in\Z$
for some $x_0\in\Z$.  Furthermore, the determinant of $M$ equals the norm of 
$\lambda$, so that $\det(M)=1$.
 
Then
$\lambda\in U_{2N}^\pm$ implies that $(e-u)+f\sqrt\Delta = 2N(e'+f'\sqrt\Delta) $ for some $u\in \set{1, -1}$ and some $e',f'$ either both rational integers or both half rational integers. 
We see that $e-u$ and $f$ are in $\Z$ and divisible by $N$.    
Also, $\lambda\equiv m \equiv u \bmod N$.

Now $ps=x_0f+(e-m)\in\Z$ and $m\equiv u\bmod N$.  So $N$ divides both $e-m$ and $f$, and therefore $N$ also divides $ps$.  Since $p$ and $N$ are relatively prime, $N\mid s$.

Conjugate $M$ by $\diag(1,N/(2f))$ on the left.  
We obtain, by Lemma~\ref{conjugate}, a unital matrix
\[
M'=
\begin{bmatrix}
m+ps&B'\\
pN&D
\end{bmatrix}\in\GL_2(\Z).
\]

Premultiply $M'$ by the triangular matrix
\[
\begin{bmatrix}
1&-s/N\\
0&1
\end{bmatrix} \in \SL_2(\Z).
\]
 We obtain a matrix in 
$K(\Gamma_1(N^\pm))$:
\[
M^*_N(m,p)=
\begin{bmatrix}
m&b\\
pN&d
\end{bmatrix}.
\]

We are now finished in the case, $\lambda\equiv m\bmod N$, taking $M_N(m,p)=M_N^*(m,p)$.  On the other hand, if
$\lambda\equiv -m\bmod N$, take $M_N(m,p)=-sM_N^*(-m,p)s$, where
$s=\diag(1,-1)$. (Note that $s$ and $-I$ are triangular and in $\Gamma_1(N)^\pm$.)
\end{proof}

 In our initial computations, we found that for $\beta\in \O_E$, the image of  
  $\psi_\Gamma$ on $\gamma_\beta$ always vanished, when 
  $\Gamma=\Gamma_0(N)^\pm$.  In the next theorem, we prove that this must be the case.  However, when $\Gamma=\Gamma_0(N)$, there are many examples where 
$\psi_\Gamma(\gamma_\beta)\ne0$.  
For instance for $N = 11$,  if $E = \Q(\sqrt2)$ and $\beta = \sqrt2$, or if
  $E =  \Q(\sqrt5)$ and  $\beta = (1 + \sqrt5)/2$, 
  then $\psi_{\Gamma_0(N),E}(\gamma_\beta)\ne0$.

\begin{theorem}\label{thm:integralbeta}
Let $\Gamma=\Gamma_0(N)^\pm$, and let $\beta\in E$ be integral.  
Then $\psi_\Gamma(\gamma_\beta)=0$.  
\end{theorem}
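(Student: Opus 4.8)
The plan is to reduce everything to the two facts about $\psi_\Gamma$ that are already available: by Theorem~\ref{psiform} we have $\psi_\Gamma(\gamma_\beta)=[e,\gamma_\beta e]_\Gamma=\Psi_{\Gamma,\Gamma}(\gamma_\beta)$, which by Lemma~\ref{Psilemma1} is a homomorphism in $\gamma_\beta$, and by Theorem~\ref{thmcusp}(2) depends only on the $\Gamma$-conjugacy class of $\gamma_\beta$. In particular, if $g\in\Gamma$ fixes some $c\in\PP^1(\Q)$, then $\Psi_{\Gamma,\Gamma}(g)=[e,ge]_\Gamma=[c,gc]_\Gamma=[c,c]_\Gamma=0$, again by Theorem~\ref{thmcusp}(2); so $\psi_\Gamma$ kills every triangular matrix in $\Gamma$ and, more generally, every matrix of $\Gamma$ having $1$ or $-1$ as an eigenvalue (its eigenvectors for that eigenvalue are $\Q$-rational). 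The goal is to produce such matrices out of $\gamma_\beta$.

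Here is where integrality is used. Since $\beta\in\O_E$, the trace $t=\tr(\beta)=\beta+\beta'$ lies in $\Z$, so the triangular matrix $h=\begin{bmatrix}-1&t\\0&1\end{bmatrix}$ lies in $\Gamma=\Gamma_0(N)^\pm$, fixes $e=(1:0)$, and satisfies $h(\beta:1)=(\beta':1)$ and $h(\beta':1)=(\beta:1)$ (and $h^2=I$). By Lemma~\ref{conjugate} and the uniqueness of the generator of the stabilizer (Section~\ref{stab}), $\gamma_{\beta'}=h\gamma_\beta h^{-1}$, hence $\psi_\Gamma(\gamma_{\beta'})=\psi_\Gamma(\gamma_\beta)$. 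On the other hand, in the $E$-eigenbasis $v_1=(\beta,1)^{\mathsf T}$, $v_2=(\beta',1)^{\mathsf T}$ one has $\gamma_\beta=\operatorname{diag}(\epsilon_\beta,\epsilon_\beta')$ and $h$ is the swap of $v_1,v_2$; therefore $\gamma_\beta\gamma_{\beta'}=\gamma_\beta(h\gamma_\beta h^{-1})=N(\epsilon_\beta)\,I$, so $\gamma_{\beta'}=N(\epsilon_\beta)\gamma_\beta^{-1}$ and (using $\psi_\Gamma(-I)=0$) $\psi_\Gamma(\gamma_{\beta'})=-\psi_\Gamma(\gamma_\beta)$. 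Comparing the two evaluations of $\psi_\Gamma(\gamma_{\beta'})$ gives $2\,\psi_\Gamma(\gamma_\beta)=0$.

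To upgrade this to $\psi_\Gamma(\gamma_\beta)=0$, set $g=\gamma_\beta h$. In the eigenbasis $g=\begin{bmatrix}0&\epsilon_\beta\\\epsilon_\beta'&0\end{bmatrix}$, so $\tr(g)=0$, $\det(g)=-N(\epsilon_\beta)$, and $g^2=N(\epsilon_\beta)\,I$. When $\det\gamma_\beta=N(\epsilon_\beta)=1$ we get $g^2=I$ with $g\ne\pm I$ (since $\gamma_\beta$ is hyperbolic while $h$ is not), so $g$ has eigenvalues $1,-1$ over $\Q$ and fixes a point of $\PP^1(\Q)$; hence $\psi_\Gamma(g)=0$, and since $\psi_\Gamma(h)=0$ the homomorphism property yields $\psi_\Gamma(\gamma_\beta)=\psi_\Gamma(g)-\psi_\Gamma(h)=0$, with no torsion input. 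I expect the case $\det\gamma_\beta=-1$ (which genuinely occurs, e.g.\ $E=\Q(\sqrt5)$, $\beta=(1+\sqrt5)/2$, $N=17$) to be the main obstacle: there $g=\gamma_\beta h$ has $g^2=-I$, an order-$4$ element of $\Gamma_0(N)$ with \emph{no} fixed point on $\PP^1(\Q)$, and one checks ($\gamma_\beta$ is scalar modulo its lower-left entry $b$, which is divisible by $N$) that $\gamma_\beta$ cannot be written as a product of a $\pm$-unipotent matrix and an involution of $\Gamma$ once $N\ge 3$. In this case the argument above still gives only $2\,\psi_\Gamma(\gamma_\beta)=0$, and the natural way to conclude $\psi_\Gamma(\gamma_\beta)=0$ is to combine this with the absence of $2$-torsion in $H_0^{\cusp}(\Gamma_0(N)^\pm,\St(\Q^2;\Z))$ (observed in the computations of Section~\ref{results}); the statement for a general commutative ring $R$ then follows from the case $R=\Z$ by base change, since $\psi_{\Gamma,E}$ over $R$ is obtained from $\psi_{\Gamma,E}$ over $\Z$ by tensoring with $R$.
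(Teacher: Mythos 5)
Your route is genuinely different from the paper's and, in one of the two cases, cleaner. The paper works directly with the matrix $g=\mat{a&b\\c&d}$ stabilizing $(\beta:1)$: integrality of $\beta$ forces $a\equiv d\pmod c$, and then the identity $[e,ge]_\Gamma=-[e,g\inv e]_\Gamma$, together with left multiplication by $\diag(1,-1)$ and by a unipotent matrix, identifies $[e,g\inv e]_\Gamma$ with $[e,ge]_\Gamma$, yielding $[e,ge]_\Gamma=-[e,ge]_\Gamma$, from which the paper concludes vanishing. You instead exploit the rational involution $h=\mat{-1&t\\0&1}\in\Gamma_0(N)^\pm$ (available exactly because $t=\beta+\beta'\in\Z$), which conjugates $\gamma_\beta$ to $\gamma_{\beta'}=N(\epsilon_\beta)\,\gamma_\beta\inv$; the homomorphism property of $\Psi$ then gives $\psi_\Gamma(\gamma_\beta)=-\psi_\Gamma(\gamma_\beta)$, i.e.\ $2\psi_\Gamma(\gamma_\beta)=0$ --- the same relation the paper's manipulation produces. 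Your refinement when $\det\gamma_\beta=1$ --- writing $\gamma_\beta=(\gamma_\beta h)h\inv$ with $\gamma_\beta h$ a non-central involution of $\GL_2(\Z)$, hence with eigenvalues $1,-1$ and rational fixed points, hence killed by $\Psi$ --- is correct and gives $\psi_\Gamma(\gamma_\beta)=0$ over any $R$ with no torsion input whatsoever. In that case your argument is actually sharper than the paper's.

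The gap is the case $\det\gamma_\beta=N(\epsilon_\beta)=-1$, which, as you note, genuinely occurs. There you obtain only $2\,\psi_\Gamma(\gamma_\beta)=0$, and your proposed completion --- the absence of $2$-torsion in $H_0^{\cusp}(\Gamma_0(N)^\pm,\St(\Q^2;\Z))$ ``observed in the computations'' --- is not a proof: the computations cover only $N\le 1000$, and the only general torsion bound in the paper (Theorem~\ref{free-6}) says the torsion is killed by a power of $6$, which does not exclude $2$-torsion. So as written your argument does not close this case. Be aware, though, that the paper's own proof passes through exactly the relation $x=-x$ and then asserts $x=0$ without separating the two norm cases, so the step you are missing is precisely the step the paper leaves implicit; an unconditional finish by either method would require actually ruling out $2$-torsion in the relevant group. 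Your final base-change remark is fine: since $H_0$ is a coinvariants functor, $H_0(\Gamma,\St(\Q^2;\Z))\otimes_\Z R\cong H_0(\Gamma,\St(\Q^2;R))$, so vanishing of the integral class $[e,\gamma_\beta e]_\Gamma$ implies vanishing over every $R$.
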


\begin{proof}  Let $y$ be in $\O_E\setminus\Z$ and $g$ be an element in $\GL_2(\Z)$ such that 
\[
g\begin{bmatrix}
y\\
1
\end{bmatrix}=\eta
\begin{bmatrix}
y\\
1
\end{bmatrix} 
\]
for some unit $\eta\in \O_E^\times$.

Suppose 
\[ g=
\begin{bmatrix}
a &b\\
c &d
\end{bmatrix}\in  \Gamma^\pm_0(N).
\]
Then $(ay+b)/(cy+d) = y$.  It follows that $ay+b = cy^2+dy$, and so
\[
 y^2+((d-a)/c)y -b/c=0.
\]
This must be the minimal polynomial for $y$, and since $y$ is integral, $a\equiv d \pmod c$.

Now we show $[e,ge]_\Gamma = 0$.  
It will be clearer here to write a modular symbol $[a_1,a_2]$ as a $2\times 2$ matrix, where 
the $i$-th  column is a vector in $\Q^2$ representing the point $a_i$ in projective space.
First note that
\[[e,ge]_\Gamma=[g\inv e, e]_\Gamma=-[e,g\inv e]_\Gamma.\]
Next, since $g\inv$ equals $
\pm\mat{
d &-b\\
-c &a
}$, we have 
\[
\mat{
1 &a\\
0 &c
}_\Gamma=
-\mat{
1 &d\\
0 &-c
}_\Gamma=
-\mat{
1 &-d\\
0 &c
}_\Gamma=
-\mat{
1 &d\\
0 &c
}_\Gamma, 
\]
where the last step uses $\diag(1,-1) \in \Gamma$.  But $a = d+kc$ for some integer $k$, so we can multiply the last modular symbol by
$
\mat{
1 &k\\
0 &1
}$
 without changing its value.  We obtain that
$
\mat{
1 &a\\
0 &c
}_\Gamma
$
is equal to minus itself and therefore equals $0$.
\end{proof}

\section{Application of Lenstra's theorem}\label{lenstrasection}

In this section we use a theorem of Lenstra~ \cite[p.~203]{L}  to prove a result we need in the rest of the paper.
First we recall Lenstra's theorem.  

Lenstra's notation: Fix a prime number $p$.   Let $K$ be a global field, $F$ a finite Galois extension of $K$, a subset $C\subset\Gal(F/K)$ which is a union of conjugacy classes, a finitely generated subgroup $W\subset K^\times$ of rank $r\ge1$ modulo its torsion subgroup, and an integer $k>0$ which is relatively prime to $p$.   If $t$ is an automorphism of a field $A$ and $B$ is a subfield of $A$ stable under $t$, then $t|_B$ denotes the restriction of $t$ to $B$. Let $(~,~)$ denote the Artin symbol.
 For any positive integer $a$, 
$\zeta_a$ denotes a primitive $a$-th root of unity. 

Let
$\bar K_{\fp}$ denote the ring of integers of $K$ modulo a prime $\fp$ of $K$. For any prime $\ell\ne p$, $q(\ell)$ equals the smallest power of $\ell$ not dividing $k$ 
and $L_\ell= K(\zeta_{q(\ell)}, W^{1/q(\ell)})$.
\begin{definition}
$M$ is the set of primes $\fp$ of $K$ satisfying the following conditions:
  \begin{enumerate}
  \item $(\fp, F/K)\subset C$,
  \item $\ord_{\fp} (w) = 0$ for all $w\in W$,
  \item if $\psi \colon W\to \bar K_{\fp}^\times$ is the natural map, then the index of $\psi(W)$ in $K_{\fp}^\times$ divides $k$.
  \end{enumerate}
\end{definition}

Lenstra's theorem (4.6) (slightly paraphrased): 
\begin{theorem}
Let $h$ be the product of those prime numbers $\ell\ne p$ for which $W\subset (K^{\times})^{q(\ell)}$.  Then $M$ is infinite if and only if there exists $\sigma\in\Gal(F(\zeta_h)/K)$ with
\begin{enumerate}
\item $\sigma |_F\in C$, and
\item $\sigma |_{L_\ell} \not= \id_{L_\ell}$ for every $\ell$ with $L_\ell \subset F(\zeta_h)$.
\end{enumerate}
\end{theorem}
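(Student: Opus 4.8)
The plan is to reconstruct the proof along the lines of Hooley's conditional resolution of Artin's primitive-root conjecture, which is the argument Lenstra generalized in \cite{L}. The first move is to translate condition~(iii) defining $M$ into a splitting condition in a tower of Kummer extensions. Fix a prime $\fp$ of $K$ with $\ord_\fp(w)=0$ for all $w\in W$, and put $I_\fp=[\bar K_\fp^\times:\psi(W)]$. Since $k$ is prime to $p$, one checks that ``$I_\fp\mid k$'' is equivalent to ``$q(\ell)\nmid I_\fp$ for every prime $\ell$'', with $q(\ell)$ as in the statement. For a prime power $q$ prime to $p$, elementary Kummer theory over the finite field $\bar K_\fp$ gives: $q\mid I_\fp$ if and only if $q\mid N\fp-1$ and $\psi(W)\subset(\bar K_\fp^\times)^{q}$; and this pair of conditions is in turn equivalent to $\fp$ splitting completely in $L_{q}:=K(\zeta_{q},W^{1/q})$. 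Hence, after discarding the finitely many bad $\fp$ (ramified somewhere relevant or dividing some $w\in W$), membership in $M$ says exactly: $(\fp,F/K)\subset C$, and $\fp$ does \emph{not} split completely in $L_\ell=L_{q(\ell)}$ for any prime $\ell\ne p$.

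Next I would run inclusion--exclusion over these ``bad'' splitting events. For squarefree $n$ supported on primes $\ne p$, let $L_n=\prod_{\ell\mid n}L_\ell$ and let $FL_n$ be its compositum with $F$; then
\[
\#\{\fp\in M:\ N\fp\le x\}=\sum_{n}\mu(n)\,\pi_C(x;FL_n),\quad \pi_C(x;FL_n):=\#\{\fp:\ N\fp\le x,\ (\fp,F/K)\subset C,\ \fp\ \text{splits completely in}\ L_n\}.
\]
For each fixed $n$, the effective Chebotarev density theorem for $FL_n/K$ gives $\pi_C(x;FL_n)\sim (c_n/[FL_n:K])\,\mathrm{Li}(x)$, where $c_n=\#\{\sigma\in\Gal(FL_n/K):\sigma|_F\in C,\ \sigma|_{L_n}=\id\}$. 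Summing termwise, the density of $M$ equals $\delta=\sum_n\mu(n)\,c_n/[FL_n:K]$, which one recognizes as the Haar measure of
\[
\{\sigma\in\Gal\bigl(F\cdot\textstyle\prod_{\ell\ne p}L_\ell \,/\, K\bigr):\ \sigma|_F\in C\ \text{and}\ \sigma|_{L_\ell}\ne\id\ \text{for all}\ \ell\ne p\}.
\]
So the theorem becomes the equivalence ``$\delta>0$ if and only if the finite condition over $\Gal(F(\zeta_h)/K)$ in the statement holds''. For all but finitely many $\ell$ the extension $L_\ell$ is as large as Kummer theory allows and linearly disjoint from $F$ together with all the other $L_{\ell'}$; for these $\ell$ the constraint $\sigma|_{L_\ell}\ne\id$ holds independently with probability $1-O(1/\ell)$, contributing only a positive convergent Euler-type product to $\delta$. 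The finitely many exceptional $\ell$ — precisely those with $W\subset(K^\times)^{q(\ell)}$, i.e.\ those dividing $h$ — have $L_\ell=K(\zeta_{q(\ell)})$ abelian and possibly entangled with $F$ and with each other through roots of unity, and all of this entanglement is contained in $F(\zeta_h)$. A finite linear-disjointness computation then shows the total measure is positive exactly when some single $\sigma\in\Gal(F(\zeta_h)/K)$ satisfies $\sigma|_F\in C$ and $\sigma|_{L_\ell}\ne\id$ for every $\ell$ with $L_\ell\subset F(\zeta_h)$; and $\delta>0$ forces $M$ infinite, while $\delta=0$ forces $M$ finite by the same sieve used only as an upper bound.

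The hard part will be making this termwise summation rigorous, and this is where GRH enters. One truncates the sum at $n\le y$ for a small power $y$ of $x$, bounds the tail $\sum_{n>y}$ using that $\fp$ splitting completely in $L_n$ forces $N\fp\equiv 1\pmod{\mathrm{rad}(n)}$ together with a Brun--Titchmarsh-type estimate, and must control the accumulated errors $\sum_{n\le y}\bigl|\pi_C(x;FL_n)-(c_n/[FL_n:K])\mathrm{Li}(x)\bigr|$. The unconditional effective Chebotarev bound of Lagarias and Odlyzko has far too weak an error term for this, whereas GRH for the Dedekind zeta function of each $FL_n$ yields an error $O\bigl(\sqrt{x}\,\log(x\cdot\mathrm{disc}(FL_n))\bigr)$, and with $\mathrm{disc}(FL_n)$ bounded polynomially in $n$ the sum over $n\le y$ becomes $o(x/\log x)$ for a suitable choice of $y$. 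Balancing these two requirements on $y$ is the crux of the whole argument — exactly the point at which Hooley's original proof needed GRH, and Lenstra's generalization is no different. I expect this analytic bookkeeping, rather than the Kummer-theoretic reinterpretation or the final Galois-group computation, to be the main obstacle.
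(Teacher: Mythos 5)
First, a point of comparison: the paper does not prove this statement at all --- it is Lenstra's Theorem~4.6, quoted (with slight paraphrase) from \cite{L} and used as a black box, so there is no internal proof to measure your reconstruction against. What you have written is an attempt to reconstruct Lenstra's own argument, which is indeed a generalization of Hooley's conditional treatment of Artin's conjecture, and your Kummer-theoretic translation of condition (iii) (namely $q(\ell)\mid I_\fp$ iff $\fp$ splits completely in $L_{q(\ell)}$, for $\fp$ outside a finite bad set) together with the inclusion--exclusion/effective-Chebotarev density computation is the correct skeleton of the hard direction. That analytic part is Lenstra's Theorem~3.1, and your account of where GRH enters is accurate in outline.

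There are, however, two genuine gaps. The first is the easy direction. You claim that ``$\delta=0$ forces $M$ finite by the same sieve used only as an upper bound.'' No sieve can do this: a truncated inclusion--exclusion with vanishing main term yields at best $\#\{\fp\in M: N\fp\le x\}=o(x/\log x)$, which is far from finiteness, and in general density zero does not imply finiteness. The correct argument is elementary and unconditional: if no $\sigma\in\Gal(F(\zeta_h)/K)$ satisfies (i) and (ii), then for any $\fp\in M$ unramified in $F(\zeta_h)$ and coprime to the relevant data, its Frobenius restricted to $F$ lies in $C$, hence by hypothesis restricts to the identity on some $L_\ell\subset F(\zeta_h)$; then $\fp$ splits completely in $L_\ell$, so $q(\ell)\mid I_\fp$ and condition (iii) fails --- so $M$ lies in a finite exceptional set. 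The second gap is that the actual content of the theorem being quoted --- as opposed to the density formula --- is precisely the assertion that positivity of the infinite ``Euler-type'' expression is detected by the single finite quotient $\Gal(F(\zeta_h)/K)$. You dispose of this as ``a finite linear-disjointness computation,'' but it is the heart of Lenstra's Section~4: one must show that for $\ell\nmid h$ the field $L_\ell$ is linearly disjoint (over the appropriate base) from the compositum of $F(\zeta_h)$ with all the other $L_{\ell'}$, and that its degree grows fast enough that the resulting product over such $\ell$ converges to a strictly positive number; only the primes $\ell$ with $W\subset(K^\times)^{q(\ell)}$, for which $L_\ell$ collapses to the abelian extension $K(\zeta_{q(\ell)})$, can entangle, and one must prove that all such entanglement lives inside $F(\zeta_h)$. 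Without that analysis the ``if and only if'' over $F(\zeta_h)$ is asserted, not proved.
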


Fix a real quadratic field $E$.  In Lenstra's notation,
set $K=E$ and 
 $k = 1$, so that 
$q(\ell)=\ell$, for all $\ell$.  Let $W$ be a subgroup of finite index in $\O_E^\times$, and assume $-1\in W$.
Let $h$ be the product of those primes $\ell$ such that $W\subset E^\ell$.

\begin{theorem}\label{lenstratheorem}
Let $c$ be an integer, $c\ne0$, $m\ge 5$  a rational prime number that
is prime to $hc$, prime to the discriminant of $E/\Q$
and such that $c\not\equiv 1 \bmod m$.
Assume GRH.   

Then there are infinitely many primes $p$ in the arithmetic progression $c+km$ such that
\begin{enumerate}
\item \label{it:len-1} $p\O_E=\pi\pi'$ splits in $E$; and
\item \label{it:len-2} the image of $W$ modulo $\pi$ is all of 
$(\O_E/\pi)^\times\simeq (\Z/p\Z)^\times$.
\end{enumerate}
\end{theorem}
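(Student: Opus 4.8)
The plan is to deduce this from Lenstra's theorem (4.6) by choosing the auxiliary Galois data so that membership in Lenstra's set $M$ encodes exactly ``$p\equiv c\pmod m$, $p$ split, and $W$ surjects onto the residue field'', and then noting that inert and ramified primes contribute only finitely many extra members of $M$. Concretely, I would apply Lenstra's theorem with $K=E$ and $k=1$ (so $q(\ell)=\ell$ for every $\ell$), with $W$ and $h$ as given. Take $F=E(\zeta_m)$, a finite abelian extension of $E$ with $\Gal(F/E)\cong(\Z/m\Z)^\times$ (the isomorphism recording the action on $\zeta_m$, using that $\Q(\zeta_m)\cap E=\Q$ since $m$ is prime to $\mathrm{disc}(E)$), and put $C=\{\,c\bmod m\,\}\subset\Gal(F/E)$; this is a union of conjugacy classes as $\Gal(F/E)$ is abelian, and is nonempty because $m$ is prime and $m\nmid c$. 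For a prime $\fp$ of $E$ over a rational prime $p\nmid m$, condition (ii) in the definition of $M$ holds automatically as $W\subset\O_E^\times$, and condition (iii) with $k=1$ says exactly that the reduction $W\to(\O_E/\fp)^\times$ is surjective; so it only remains to control condition (i), $(\fp,F/E)\in C$, together with the residue degree of $\fp$.

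\emph{Reducing to ``$M$ infinite''.} If $p$ splits in $E$, then $\fp$ has residue degree $1$, $\O_E/\fp\cong\F_p$, and $(\fp,F/E)=\mathrm{Frob}_p=p\bmod m$, so condition (i) holds iff $p\equiv c\pmod m$; then $\fp$ and its conjugate realize the conclusion of the theorem with $\pi=\fp$. If instead $p$ is inert, then $\O_E/\fp\cong\F_{p^2}$; writing $W=\langle-1,\eta\rangle$ with $\eta$ a power of the fundamental unit (so $N_{E/\Q}\eta=\pm1$), the image $\bar\eta$ satisfies $\bar\eta^{\,p+1}=\pm1$, whence the image of $W$ has order at most $2(p+1)$, which is strictly less than $\#\F_{p^2}^\times=p^2-1$ as soon as $p\ge5$; thus condition (iii) fails for every inert $p\ge5$, leaving only $p\in\{2,3\}$ and the finitely many ramified $p\mid\mathrm{disc}(E)$ as possible exceptions. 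Hence ``$M$ infinite'' forces infinitely many degree-one primes $\fp$ of $E$ over split rational primes $p\equiv c\pmod m$ with $W\to(\O_E/\fp)^\times$ onto; since at most two $\fp$ lie over each $p$, this yields infinitely many such $p$, which is the assertion of the theorem.

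\emph{Verifying Lenstra's criterion.} By Lenstra's theorem (4.6) --- which is where GRH enters, through \cite{L} --- it suffices to produce $\sigma\in\Gal(F(\zeta_h)/E)$ with $\sigma|_F\in C$ and $\sigma|_{L_\ell}\ne\id_{L_\ell}$ for every prime $\ell$ with $L_\ell=E(\zeta_\ell,W^{1/\ell})\subset F(\zeta_h)=E(\zeta_{mh})$ (here $\gcd(m,h)=1$ since $m$ is prime to $h$). I would take $\sigma$ to act by $\zeta_m\mapsto\zeta_m^{\,c}$ and trivially on $E$, which already gives $\sigma|_F\in C$. The key points are: (a) for $\ell\mid h$ one has $W\subset(E^\times)^\ell$, so $L_\ell=E(\zeta_\ell)$, and ``$\sigma|_{L_\ell}\ne\id$'' just asks that $\sigma$ act nontrivially on $E(\zeta_\ell)$ --- automatic for $\ell=m$ because $c\not\equiv1\pmod m$, and arrangeable for the other $\ell\mid h$ since $E(\zeta_\ell)\supsetneq E$ ($E$ being real quadratic); (b) since $-1\in W$ and $-1\notin(E^\times)^2$ (as $E$ is real), $2\nmid h$, so $\ell=2$ is relevant only if $L_2=E(W^{1/2})$ happens to lie inside $E(\zeta_{mh})$, in which case, because $\Q(\zeta_m)/\Q$ is totally ramified at $m$ while $L_2$ is unramified at $m$, we have $L_2\cap E(\zeta_m)=E$, so $\sigma$ can be made nontrivial on $L_2$ compatibly with its prescribed value on $E(\zeta_m)$; (c) these requirements, being imposed at the distinct rational primes dividing $m$, $h$, and $\mathrm{disc}(E)$, are mutually compatible, because $\Gal(F(\zeta_h)/\Q)$ factors over those primes and the only equality constraint $\sigma|_E=\id$ lives in the $\mathrm{disc}(E)$-part and still leaves room to make $\sigma$ nontrivial on $E(\zeta_\ell)$ for $\ell\mid\gcd(h,\mathrm{disc}(E))$ and on $L_2$. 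Such a $\sigma$ therefore exists, $M$ is infinite, and the theorem follows.

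\emph{Main obstacle.} I expect the real work to be the last step: pinning down precisely which Kummer extensions $L_\ell$ can sit inside the cyclotomic field $F(\zeta_h)$ --- the genuinely awkward case being $\ell=2$, where $L_2=E(W^{1/2})$ can in fact be abelian over $\Q$ and ramified at primes dividing $\mathrm{disc}(E)$ --- and then exhibiting a single $\sigma$ that is simultaneously nontrivial on all the relevant $L_\ell$ while restricting to the prescribed class $c$ on $\Q(\zeta_m)$. The hypotheses $c\not\equiv1\pmod m$, $-1\in W$ with $E$ real, and $m$ prime to $\mathrm{disc}(E)$ are exactly what is needed to clear the cases $\ell=m$, $\ell=2\mid h$, and the interaction between the $m$-part and the rest, respectively.
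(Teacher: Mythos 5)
Your overall strategy matches the paper's: apply Lenstra's criterion with $K=E$, $k=1$, $F=E(\zeta_m)$, $C=\{\sigma_c\}$, observe that condition (iii) with $k=1$ is exactly surjectivity of $W\to(\O_E/\fp)^\times$, and dispose of inert primes by noting that the image of $W$ in $\F_{p^2}^\times$ has order at most $2(p+1)<p^2-1$. Your identification of the congruence condition directly through $\Gal(E(\zeta_m)/E)\cong(\Z/m\Z)^\times$ (using that $m$ is prime to the discriminant) is if anything cleaner than the paper's endgame, which recovers $p\equiv c\bmod m$ from $\zeta_m^c\equiv\zeta_m^p\bmod\fP$ for large $p$.

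However, there is a genuine gap at the step you yourself flag as the main obstacle: you never actually produce a single $\sigma\in\Gal(F(\zeta_h)/E)$ that restricts to $\sigma_c$ on $F$ and is simultaneously nontrivial on every $L_\ell\subset F(\zeta_h)$. Your claim that the requirements ``decouple across the primes dividing $m$, $h$, and $\mathrm{disc}(E)$'' is not a proof: each requirement $\sigma|_{L_\ell}\ne\id$ excludes a coset inside the fiber over $\sigma_c$, and a union of such excluded sets can in principle cover the whole fiber; moreover $L_2=E(W^{1/2})$ is a multiquadratic extension whose ramification can overlap with the primes dividing $h$ and $\mathrm{disc}(E)$, so your ramification-at-$m$ argument only rules out interference with the $m$-component. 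Your case analysis is also incomplete: you treat $\ell=m$, $\ell\mid h$, and $\ell=2$, but you give no argument (the paper uses a degree count showing $\phi(\ell)\le 2$ whenever $E(\zeta_\ell)\subset E(\zeta_{hm})$ and $\ell\nmid 6hm$) that no other $\ell$ can have $L_\ell\subset F(\zeta_h)$, and the case $\ell=3$ with $3\nmid h$ is not covered by any of your three bullets. The paper resolves all of this with one explicit choice that your write-up is missing: take $\sigma$ to be (the restriction of) the automorphism acting as complex conjugation on $\Q(\zeta_n,\zeta_{6h})$ and as $\zeta_m\mapsto\zeta_m^c$ on $\Q(\zeta_m)$. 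Since $E$ is real, this $\sigma$ fixes $E$ and restricts to $\sigma_c$ on $F$; since $-1\in W$ forces $\sqrt{-1}\in L_2$, complex conjugation is nontrivial on $L_2$; and it is nontrivial on $\zeta_\ell$ for every $\ell\ge 3$, which handles all remaining relevant $\ell\ne m$ in one stroke. Supplying this explicit $\sigma$ (together with the degree argument limiting the relevant $\ell$ to divisors of $6hm$) is exactly what your proposal needs to become a complete proof.
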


\begin{proof}
 Let
\[F=E(\zeta_m),\] and for any prime $\ell$, 
\[L_\ell=E(\zeta_\ell,W^{1/\ell}).\]

 Let $n$ be a positive integer relatively prime to $m$  and such that 
$E\subset \Q(\zeta_{n})$.  Then 
$F=E(\zeta_m)\subset \Q(\zeta_n,\zeta_m)$.
There exists $\tau\in 
\Gal(\Q(\zeta_n, \zeta_m)/\Q)=
\Gal(\Q(\zeta_n)/\Q)\times \Gal(\Q(\zeta_m)/\Q)$
such that $\tau$ is the identity on $\Q(\zeta_n)$ and 
$\tau(\zeta_m)=\zeta_m^c$.
Then setting $\sigma_c=\tau |_F$, we have that 
$\sigma_c\in \Gal(F/E)$ and $\sigma_c(\zeta_m)=\zeta_m^c$. 
Set Lenstra's $C=\{\sigma_c\}$ the singleton conjugacy class in $\Gal(F/E)$.

Then~\cite[Theorem 4.6, p. 208]{L} implies the following:
\begin{lemma}\label{L} 
Assume  GRH.   Then the set of prime ideals
 $\fp$ in $\O_E$ such that 
\begin{enumerate}[label=(\alph*)]
\item \label{it:L-a} the Frobenius of a prime above $\fp$ in $F/E$ is the automorphism 
$\sigma_c$; and 

\item \label{it:L-b} the image of $W$ modulo $\fp$ is all of 
$(\O_E/\fp)^\times$
\end{enumerate}
has positive density if (and only if)
there exists $\sigma\in \Gal(F(\zeta_h)/E)$ such that
\begin{enumerate}
\item \label{it:L-1} 
 $\sigma |_F = \sigma_c$;
and
\item \label{it:L-2} 
for every rational prime $\ell$,  if $L_\ell$ is contained in $F(\zeta_h)$,
then
$\sigma |_{L_\ell}$ is not the identity permutation.
\end{enumerate}
\end{lemma}

\noindent  With this lemma in mind, we first we construct $\sigma$ that satisfies \ref{it:L-1}, and then we  verify \ref{it:L-2}.

Note that $F(\zeta_h)=E(\zeta_h,\zeta_m)\subset \Q(\zeta_n,\zeta_{6h},\zeta_m)$.  
Now
\[
\Gal( \Q(\zeta_n,\zeta_{6h},\zeta_m)/ \Q)=
\Gal(\Q(\zeta_n,\zeta_{6h})/\Q)\times \Gal(\Q(\zeta_m)/\Q).
\]
Let $\phi\in \Gal( \Q(\zeta_n,\zeta_{6h},\zeta_m)/ \Q)$ be the element  that is complex conjugation on $\Q(\zeta_n,\zeta_{6h})$ and 
$\sigma_c | \Q(\zeta_m)$ on $\Q(\zeta_m)$.  Set $\sigma=\phi |_{F(\zeta_h)}$.

Then \ref{it:L-1} is satisfied because $E$ is real, and $F$ is the compositum of $E$ and 
$\Q(\zeta_m)$.

As for \ref{it:L-2},  assume that $L_\ell \subset F(\zeta_h)$.
First suppose $\ell \nmid 6hm$.  By our supposition,  
\[E(\zeta_\ell)\subset E(\zeta_m,\zeta_h)=E(\zeta_{hm}).\]
 Consider the diagram, with the degrees of the extensions as shown:
\[
\xymatrix{
&E(\zeta_{\ell},\zeta_{hm})  \ar@{-}[ddr]^1&\\
\Q(\zeta_{\ell},\zeta_{hm})  \ar@{-}[ur]^{\le2}  \ar@{-}[ddr]^{\phi(\ell)}  && \\
&&E(\zeta_{hm})  \ar@{-}[dl]^{\le2}  \\
&\Q(\zeta_{hm})&
}
\]
It follows that $\phi(\ell)\le2$ and therefore $\ell=2$ or $3$, which is contra hypothesis.

Therefore $\ell \mid 6hm$.  If $\ell=m$, then  
$\sigma |_{L_\ell}$ is not the identity permutation because it raises $\zeta_m$ to the $c$ power.  If $\ell \mid 3h$, then 
$\sigma |_{L_\ell}$ is not the identity permutation because it acts as complex conjugation on $\zeta_\ell$. 
If $\ell = 2$, then 
$\sigma |_{L_\ell}$ is not the identity permutation because it acts as complex conjugation on $\sqrt{-1}\in L_\ell$.

By Lemma~\ref{L}, we now know there are infinitely many ideals $\fp$ satisfying \ref{it:L-a} and \ref{it:L-b}.  We claim that all but finitely many such $\fp$ are split in $E$.  Suppose $\fp$ is inert.  Then $\O_E/\fp=\F_{p^2}$ where $\fp$ lies over the rational prime $p$.
Now (b) says that the image of $W$ modulo $\fp$ is all of 
$(\O_E/\fp)^\times$.  But $W/\pm1$ is cyclic, so this image 
is generated by $\pm 1$ and some
$x$ which is the reduction of some power of $\epsilon$ modulo $\fp$.  Globally, the Galois conjugate of $\epsilon$ is $\epsilon\inv$, so the $\Gal(\F_{p^2}/\F_p)$-conjugate of $x$ is $x\inv$.  That means that if $x^k\in \F_p$ then 
$x^k=x^{-k}$ and therefore $x^k=\pm 1$.  
So $\set{ \text{$\pm$ the powers of $x$}}$  cannot equal all of $(\O_E/\fp)^\times$ if $p>3$.

Now we have infinitely primes $p$ satisfying \ref{it:len-1} and \ref{it:len-2} of Theorem~\ref{lenstratheorem} and such that 
\ref{it:L-a} and \ref{it:L-b} of Lemma~\ref{L} holds for $\fp=\pi$.
  We claim that if such a $p$ is sufficiently large, then it is in the arithmetic progression $c+km$.  Indeed,
\ref{it:L-a} says that the Frobenius of a prime 
$\fP$ above $\fp$ in $E(\zeta_m)/E$ is the automorphism $\sigma_c$.  That means, since $p$ is split, that raising to the $p$-th power in 
$\Gal(\O_ {E(\zeta_m)}/\fP/\O_E/\fp)$ is induced by $\sigma_c$.  Therefore
\[
\zeta_m^c\equiv\zeta_m^p\bmod \fP.
\]
Therefore if the images of the $m$-th roots of unity in $\O_{E(\zeta_m)}/\fP$
are pairwise distinct, then $p$ is congruent to $c$ modulo $m$. But this will be true if $p$ is sufficiently large.  (In fact, if $S$ is any finite subset of 
$\O_{E(\zeta_m)}$, the set of prime ideals in $\O_{E(\zeta_m)}$ dividing any
member of the set $\set{s_1-s_2 \given s_1\ne s_2,\ s_1,s_2\in S}$ is finite.)
\end{proof}

\section{The images of \texorpdfstring{$\psi_{\Gamma_1(N)^\pm}\  {\text{and}} \ \psi_{\Gamma_1(N)}$}{psi-Gamma-1(N)-pm, \ and psi-{Gamma-1(N)}}}  \label{im1}

Fix a real quadratic field $E$, and fix a coefficient ring $R$.
For the first few paragraphs of this section, let $\Gamma$ be any subgroup of $\GL_2(\Q)$.
Recall from Definition~\ref{N3} that 
$K(\Gamma)$ is the group generated by all the unital 
and triangular matrices in $\Gamma$.  

Let $\set{e_1,e_2}$ be the standard basis of 
$\Z^2$ and recall that $e$ is the image of $e_1$ in $\PP^1(\Q)$.  Let 
$f$ denote the image of $e_2$ in $\PP^1(\Q)$.

\begin{lemma} \label{lem:tri-image}
If $u\in\Gamma$ is triangular, then $[e,ue]_\Gamma=0$, and hence it is in the image of $\psi_\Gamma$.
\end{lemma}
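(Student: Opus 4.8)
The plan is to split on whether the triangular matrix $u$ of Definition~\ref{tri} is upper or lower triangular, to identify in each case a point of $\PP^1(\Q)$ fixed by $u$, and then to use the modular symbol relations of Theorem~\ref{thm:mod-props} together with the fact that the $\Gamma$-action on $H_0(\Gamma,\St(\Q^2;R))$ is trivial.

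First I would treat $u=\mat{\pm1&*\\0&\pm1}$. Here $ue_1$ is a nonzero scalar multiple of $e_1$, so $u$ fixes $e=(1:0)$, and hence $[e,ue]_\Gamma=[e,e]_\Gamma=0$. Next I would treat $u=\mat{\pm1&0\\ *&\pm1}$. The key observation is that $u$ now fixes the \emph{other} standard point $f=(0:1)$, since $ue_2$ is a nonzero scalar multiple of $e_2$. Writing $[e,ue]=[e,f]+[f,ue]$ and passing to $\Gamma$-coinvariants gives $[f,ue]_\Gamma=[uf,ue]_\Gamma=[f,e]_\Gamma$, where the last equality holds because applying $u\in\Gamma$ leaves a class in $H_0(\Gamma,\St(\Q^2;R))$ unchanged. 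Therefore $[e,ue]_\Gamma=[e,f]_\Gamma+[f,e]_\Gamma=[e,e]_\Gamma=0$. The final clause of the statement is then automatic: $\psi_\Gamma$ is a homomorphism, so $0=[e,ue]_\Gamma$ lies in $\image(\psi_\Gamma)$.

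I do not expect a genuine obstacle here. The only points that require any care are the bookkeeping of which of $e$ or $f$ is fixed in each of the two cases, and the observation that the $\Gamma$-action is trivial on $H_0$ so that $[uf,ue]_\Gamma=[f,e]_\Gamma$; both are immediate from the definitions. Conceptually, the lemma records that the triangular generators of $K(\Gamma)$ contribute nothing to $\image(\psi_\Gamma)$ beyond what the unital matrices already contribute, which is what makes it useful later.
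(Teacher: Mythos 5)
Your proof is correct and is essentially the paper's own argument: the upper triangular case is immediate since $u$ fixes $e$, and the lower triangular case uses $uf=f$ together with the relation $[e,ue]=[e,f]+[f,ue]$ and triviality of the $\Gamma$-action on coinvariants. No issues.
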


\begin{proof}
If $u\in\Gamma$ is upper triangular, then 
$[e,ue]_\Gamma=
[e,e]_\Gamma=0$.

If $u\in\Gamma$ is lower triangular, then 
\[
  [e,ue]_\Gamma =
  [e,f]_\Gamma+[f,ue]_\Gamma =
  [e, f]_\Gamma-[ue,uf]_\Gamma =
  0.  \]
\end{proof}

\begin{lemma}\label{Kgen}  
The image of $\psi_\Gamma$ is the $R$-span of $[e,\gamma e]_\Gamma$ where $\gamma$ runs over $K(\Gamma)$.
\end{lemma}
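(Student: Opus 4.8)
The plan is to combine Corollary~\ref{widetilde}, which identifies the image of $\psi_\Gamma$ with the $R$-span of $\{[e,ge]_\Gamma : g\in \widetilde K(\Gamma)\}$, with Lemma~\ref{lem:tri-image}, which says each triangular $u\in\Gamma$ contributes $[e,ue]_\Gamma=0$, and then invoke the additivity of $\Psi_{K(\Gamma),\Gamma}$ supplied by Lemma~\ref{Psilemma1} (equivalently, Lemma~\ref{S}). Recall $\widetilde K(\Gamma)$ is generated by all $\Gamma_\beta$ as $\beta$ ranges over $\PP^1(E)\setminus\PP^1(\Q)$, so $\widetilde K(\Gamma)$ is generated by all unital matrices; by Definition~\ref{N3}, $K(\Gamma)$ is generated by all unital matrices together with all triangular matrices in $\Gamma$, so $\widetilde K(\Gamma)\subset K(\Gamma)$.

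First I would apply Lemma~\ref{S} with $G=\Gamma$ and $S$ the set of all unital matrices in $\Gamma$: the $R$-span of $\{[e,se]_\Gamma : s\in S\}$ equals the $R$-span of $\{[e,te]_\Gamma : t\in\langle S\rangle\}$, and $\langle S\rangle=\widetilde K(\Gamma)$. By Corollary~\ref{widetilde}, this common span is exactly the image of $\psi_\Gamma$. Next I would enlarge $S$ to $S'=S\cup\{\text{triangular matrices in }\Gamma\}$, so that $\langle S'\rangle=K(\Gamma)$. Applying Lemma~\ref{S} again with $S'$, the image of $\psi_\Gamma$ equals the $R$-span of $\{[e,s'e]_\Gamma : s'\in S'\}$; but adding the triangular generators changes nothing, since by Lemma~\ref{lem:tri-image} every such new element $[e,ue]_\Gamma$ is zero and hence already lies in any $R$-submodule. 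Therefore the $R$-span over $S'$ equals the $R$-span over $S$, and one more application of Lemma~\ref{S} (now to the full generating set $S'$ of $K(\Gamma)$) gives that this equals the $R$-span of $\{[e,\gamma e]_\Gamma : \gamma\in K(\Gamma)\}$, as claimed.

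There is no real obstacle here; the statement is essentially bookkeeping assembled from results already in hand. The one point that deserves a sentence of care is making sure the reductions line up: Lemma~\ref{S} is about spans of $[e,se]_G$ over a set $S$ versus over the subgroup $\langle S\rangle$, and one must be a little careful that inserting generators whose associated symbols vanish does not alter the span — but this is immediate, since throwing a zero element into a generating set of an $R$-module leaves the module unchanged. Alternatively, and perhaps more cleanly, I would phrase the whole argument through the homomorphism $\Psi_{K(\Gamma),\Gamma}\colon K(\Gamma)\to H_0(\Gamma,\St(\Q^2;R))$ of Definition~\ref{Psidef}: its image is generated over $\Z$ (hence over $R$) by the images of any generating set of $K(\Gamma)$ by Lemma~\ref{Psilemma1}, the triangular generators map to $0$ by Lemma~\ref{lem:tri-image}, and the unital generators span the image of $\psi_\Gamma$ by Corollary~\ref{widetilde}; comparing the two descriptions finishes the proof.
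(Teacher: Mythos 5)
Your proposal is correct and follows essentially the same route as the paper: both arguments combine Corollary~\ref{cor:im_psi} (equivalently Corollary~\ref{widetilde}), Lemma~\ref{S}, and Lemma~\ref{lem:tri-image} to see that adjoining the triangular generators to the unital ones does not change the $R$-span. The paper's proof is just a more compressed version of the same bookkeeping.
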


\begin{proof}
By Corollary~\ref{cor:im_psi}, Lemma~\ref{S}, and Lemma~\ref{lem:tri-image}, 
if $k\in K(\Gamma)$, then $[e,ke]_\Gamma\in\image\psi_\Gamma$.  
On the other hand, by definition, the image of $\psi_\Gamma$ is the $R$-span of 
$[e,ge]_\Gamma$ where $g$ runs over just the unital matrices in $\Gamma$, which are all in  $K(\Gamma)$.
\end{proof}

Compare Lemma~\ref{Kgen} to Corollary~\ref{widetilde}.  The set of symbols involved is getting larger, but their span remains the same.

\begin{lemma}\label{KG}  
Let $\Gamma$ be a subgroup of $\GL_2(\Q)$, $R$ any ring.  
Suppose $K(\Gamma)=\Gamma$. Then the
 image of $\psi_{\Gamma}$ is all of
 $H_0^{\cusp}(\Gamma, \St(\Q^2;R))$.
\end{lemma}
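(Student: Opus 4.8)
The plan is to combine Lemma~\ref{Kgen} with part~\eqref{eq:hcusp-b} of Lemma~\ref{hcusp}. By Lemma~\ref{Kgen}, the image of $\psi_{\Gamma}$ is the $R$-span of the symbols $[e,\gamma e]_\Gamma$ as $\gamma$ runs over $K(\Gamma)$. Under the hypothesis $K(\Gamma)=\Gamma$, this is exactly the $R$-span of $\set{[e,ge]_\Gamma \given g\in\Gamma}$, which in the notation of Lemma~\ref{hcusp} is the module called $V$.

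Next I would invoke Lemma~\ref{hcusp}, applied with $G=\Gamma$, which asserts precisely that $V=H_0^{\cusp}(\Gamma,\St(\Q^2;R))$. This is equation~\eqref{eq:hcusp-b}. Putting the two identifications together gives $\image(\psi_\Gamma)=V=H_0^{\cusp}(\Gamma,\St(\Q^2;R))$, which is the claim.

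So the proof is a two-line chain of already-established equalities, and there is no real obstacle: everything has been set up in the preceding lemmas. The only thing to be slightly careful about is that Lemma~\ref{Kgen} is stated for a general subgroup $\Gamma\subset\GL_2(\Q)$ and Lemma~\ref{hcusp} likewise, so both apply verbatim here; no arithmeticity or finite-index assumption is needed. One might add a sentence noting that the containment $\image(\psi_\Gamma)\subseteq H_0^{\cusp}(\Gamma,\St(\Q^2;R))$ is already guaranteed by Theorem~\ref{thmcusp}, so the content of the lemma is the reverse inclusion, which the hypothesis $K(\Gamma)=\Gamma$ supplies.

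\begin{proof}
By Lemma~\ref{Kgen}, the image of $\psi_\Gamma$ is the $R$-span of the symbols $[e,ge]_\Gamma$ as $g$ runs over $K(\Gamma)$. Since $K(\Gamma)=\Gamma$ by hypothesis, this $R$-span is the module $V$ of Lemma~\ref{hcusp}. By \eqref{eq:hcusp-b} of that lemma, $V=H_0^{\cusp}(\Gamma,\St(\Q^2;R))$. Hence the image of $\psi_\Gamma$ is all of $H_0^{\cusp}(\Gamma,\St(\Q^2;R))$.
\end{proof}
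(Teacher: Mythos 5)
Your proof is correct and is essentially identical to the paper's own argument: both combine Lemma~\ref{Kgen} with the identification $V=H^{\cusp}_0(\Gamma,\St(\Q^2;R))$ from Lemma~\ref{hcusp} and conclude via $K(\Gamma)=\Gamma$. (If anything, your citation of \eqref{eq:hcusp-b} is the more precise reference for the equality being used.)
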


\begin{proof}
By Lemma~\ref{hcusp} \eqref{eq:hcusp-a}, $H_0^{\cusp}(\Gamma, \St(\Q^2;R))$ is the $R$-span of all 
 $[e,\gamma e]_\Gamma$ as $\gamma$ runs over $\Gamma$.   
 By Lemma~\ref{Kgen}, the image of $\psi_\Gamma$ is the $R$-span of all 
 $[e,ke]_\Gamma$ as $k$ runs over $K(\Gamma)$.
Since $K(\Gamma)=\Gamma$,  these two $R$-spans are the same.
 
\end{proof}

\begin{theorem}\label{maintheorem}
Let $R$ be any coefficient ring and $N\ge1$.   Assume GRH.  Let 
$\Gamma=\Gamma_1(N)^\pm$ or $\Gamma_1(N)$
Then:
\begin{enumerate}
\item \label{it:main-a} $K(\Gamma)=\Gamma$.
\item \label{it:main-b}  The image of $\psi_{\Gamma}$ 
is all of $H_0^{\cusp}(\Gamma, \St(\Q^2;R))$.
\end{enumerate}
\end{theorem}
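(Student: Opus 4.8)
The plan is to deduce (ii) from (i) and to prove (i) by a reduction argument. Assertion (ii) is immediate: if $K(\Gamma)=\Gamma$, then Lemma~\ref{KG} — whose hypothesis is exactly $K(\Gamma)=\Gamma$ — gives $\image(\psi_\Gamma)=H_0^{\cusp}(\Gamma,\St(\Q^2;R))$. So the whole content is the group-theoretic identity $K(\Gamma)=\Gamma$, for which only the inclusion $\Gamma\subseteq K(\Gamma)$ needs proof.

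To prove $\Gamma\subseteq K(\Gamma)$ I would run a ``matrix Euclidean algorithm.'' Given $\gamma=\mat{a&b\\c&d}\in\Gamma$, write $c=Nc_1$ (possible since $N\mid c$) and induct on $|c_1|$. If $c_1=0$ then $ad=\det\gamma=\pm1$ forces $a,d\in\{\pm1\}$, so $\gamma$ is upper triangular and hence lies in $K(\Gamma)$; when $\Gamma=\Gamma_1(N)^\pm$ one first uses the triangular matrices $\diag(1,-1)$ and $-I$ of $\Gamma$ to reduce to $\det\gamma=1$ and $a\equiv1\pmod N$. If $c_1\neq0$, I would produce $g\in K(\Gamma)$ — a product of triangular matrices in $\Gamma$ and one matrix $M_N(m,p)^{\pm1}$ of Theorem~\ref{main2} — with $g\gamma$ (automatically in $\Gamma$) having lower-left entry of strictly smaller absolute value, so that $\gamma=g^{-1}(g\gamma)\in K(\Gamma)$ by induction. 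The triangular matrices $\mat{1&t\\0&1}$ and $\mat{1&0\\Nt&1}$ in $\Gamma$ realize the moves $a\mapsto a+tc$ and $c_1\mapsto c_1+ta$; since $\gcd(a,c)=1$ (hence also $\gcd(a,N)=\gcd(a,c_1)=1$), these either make $c_1=0$ outright (when $|a|=1$) or bring us to the case $0<|c_1|<|a|$. But these moves alone do not suffice to continue once $0<|c_1|<|a|$, and here the matrices $M_N(m,p)=\mat{m&*\\pN&*}$ of Theorem~\ref{main2} enter: left-multiplication by $M_N(m,p)^{-1}=\mat{*&*\\-pN&m}$ replaces $c_1$ by $mc_1-pa$, and the goal is to choose a permissible $m\equiv\pm1\pmod N$ and a permissible prime $p$ with $|mc_1-pa|<|c_1|$.

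This is where GRH enters. Theorem~\ref{main2} produces $M_N(m,p)$ only for primes $p$ that split in $E$ and for which $U_{4N}^\pm$ surjects onto $(\O_E/\fp)^\times$, and Theorem~\ref{lenstratheorem} — the sole place GRH is invoked — yields infinitely many such $p$ lying in one arithmetic progression modulo an auxiliary prime $\mu$ (chosen coprime to $6$, to the discriminant of $E$, and to Lenstra's integer $h$ for $W=U_{4N}^\pm$). The reduction step, however, wants $p$ in a \emph{finer} progression, essentially $p\equiv a^{-1}\pmod{Nc_1}$, so that $mc_1-pa$ can be pushed below $|c_1|$ for an allowed $m$. \textbf{Reconciling these two congruence conditions on $p$ is the main obstacle.} I would handle it by taking $\mu$ coprime to $Nc_1$ as well and applying the method of Theorem~\ref{lenstratheorem} with $F$ replaced by $F(\zeta_{Nc_1})$ and the Frobenius class enlarged so as also to fix the residue of $p$ modulo $Nc_1$; Dirichlet's theorem then guarantees that the resulting positive-density set of primes actually meets the required progression, and the existence hypothesis in Lenstra's theorem is verified as in the proof of Theorem~\ref{lenstratheorem}. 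A short separate argument handles the residual case in which $|c_1|$ is already minimal (so the step above cannot lower it): one picks $m$ with $m\equiv d^{-1}\pmod N$ and $m\mid p\mp1$, so that $M_N(m,p)$, corrected by triangular matrices, annihilates the lower-left entry of $\gamma$ at once.

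Finally, $\Gamma=\Gamma_1(N)$ goes through the same reduction: the matrices $M_N(m,p)$ have determinant $1$ (the norm computation in the proof of Theorem~\ref{main2}), so replacing $M_N(m,p)$ by $-M_N(m,p)$ whenever its upper-left entry is $\equiv-1\pmod N$ keeps every triangular and every unital matrix used inside $\Gamma_1(N)$. Hence $K(\Gamma_1(N))=\Gamma_1(N)$, and (ii) for both $\Gamma_1(N)^\pm$ and $\Gamma_1(N)$ follows from Lemma~\ref{KG}.
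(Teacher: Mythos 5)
Your reduction of (ii) to (i) via Lemma~\ref{KG}, and your reduction of the $\Gamma_1(N)$ case to the $\Gamma_1(N)^\pm$ case by adjusting signs of unital matrices, both match the paper. But the core of (i) --- showing $\Gamma\subseteq K(\Gamma)$ --- has a genuine gap. Your Euclidean descent hinges on finding, for the \emph{given} pair $(a,c_1)$, a Lenstra-admissible prime $p$ and an $m\equiv\pm1\pmod N$ with $|mc_1-pa|<|c_1|$. That inequality is an archimedean condition, not a congruence condition: given $p$, it forces $m$ to lie within distance $1$ of $pa/c_1$, and integers $\equiv\pm1\pmod N$ are too sparse for that; given $m$, it forces $p$ into an interval of length $2|c_1|/|a|<2$. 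Routing around this by demanding $p\equiv a^{-1}\pmod{Nc_1}$ requires a version of Theorem~\ref{lenstratheorem} for progressions modulo the \emph{composite} integer $Nc_1$, which may share factors with $6$, with the discriminant of $E$, and with Lenstra's $h$ --- precisely the coprimality hypotheses that make the verification of Lenstra's existence criterion go through in the paper. Your one-sentence fix (``replace $F$ by $F(\zeta_{Nc_1})$ and enlarge the Frobenius class'') does not address this, and your residual case (``$m\equiv d^{-1}\pmod N$ and $m\mid p\mp1$'') is not compatible with the hypotheses of Theorem~\ref{main2}, which needs $m\equiv\pm1\pmod N$.

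The paper avoids all of this: no descent is needed. Since $\gcd(a,Nc)=1$, an upper unipotent move replaces $a$ by a Dirichlet prime $m$ in the progression $a+tNc$, chosen large enough to be coprime to $6Nc\cdot\mathrm{disc}(E)$ and to satisfy $U_{4N}^\pm\not\subset E^m$; \emph{then} a lower unipotent move replaces $Nc$ by $N(c+km)$, and Theorem~\ref{lenstratheorem} is applied with modulus the prime $m$ that was just manufactured --- exactly its setting --- to make $p=c+km$ a split prime with the surjectivity property. One application of Theorem~\ref{main2} then produces a single element of $K(\Gamma)$ whose first column is $(m,Np)^t$, and the leftover matrix stabilizing $e_1$ is triangular. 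In other words, the order of operations is reversed relative to your plan: the prime modulus for Lenstra's theorem is \emph{created} by the unipotent moves rather than imposed by the matrix being reduced, which is what keeps the required input within the scope of Theorem~\ref{lenstratheorem} as actually proved.
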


\begin{proof}
By Lemma~\ref{KG}, assertion~\ref{it:main-a} implies assertion~\ref{it:main-b}
It remains to prove \ref{it:main-a}.

First let $\Gamma=\Gamma_1(N)^\pm$.
Given
\[
\delta =
\begin{bmatrix}
a&b\\
Nc&d
\end{bmatrix}\in\Gamma
\]
we must show that $\delta\in K(\Gamma)$.  
It suffices to show that there exists $\gamma\in K(\Gamma)$ such that 
$\gamma e_1=\delta e_1$, because then $z=\gamma\inv \delta\in\Gamma$ and stabilizes $e_1$ and so is triangular. For then $z\in K(\Gamma)$ and 
 $\delta=\gamma z$ is in  $K(\Gamma)$ also.
 
 So we have to prove the following lemma:

\begin{lemma}\label{mainlemma}
Let $N\ge1$, and let $\Gamma=\Gamma_1(N)^\pm$.  Let $a$ and $c$ be relatively prime integers such that $a\equiv\pm1\bmod N$.
 Assume GRH.  Then there exists 
$\gamma\in K(\Gamma)$ such that 
\[
\gamma e_1=
\begin{bmatrix}
a\\
Nc
\end{bmatrix}.
\]
\end{lemma}

Proof of the lemma:
First we assume that $c\ne1$.

We may pre- and post-multiply $\gamma$ by elements in $K(\Gamma)$ as needed.  All unipotent triangular matrices that are in $\Gamma$ are also in $K(\Gamma)$.
We use Dirichlet's theorem on primes in an arithmetic progression
and Theorem~\ref{lenstratheorem}.  
Let
\[
w=
\begin{bmatrix}
a\\
Nc
\end{bmatrix}.
\]
If $u\in\Gamma$
is an upper triangular unipotent matrix with $t\in\Z$ in the upper right hand corner then 
\[
uw=
\begin{bmatrix}
a+tNc\\
Nc
\end{bmatrix}.
\]
Therefore, there exists an upper triangular unipotent matrix $u$,  such that 
\[
uw=\begin{bmatrix}
m\\
Nc
\end{bmatrix}
\]
where $m$ is a prime number, prime to $6Nc$, prime to the discriminant of $E$, such that $U_{4N}^\pm$ is not contained in $E^m$, and such that $c\not\equiv 1 \bmod m$.  (Just take $m$ sufficiently large.)  

 By Theorem~\ref{lenstratheorem},  there are infinitely many primes $p$ in the arithmetic progression $c+km$ such that

 \begin{enumerate}
 \item \label{it:prog-1} $p\O_E=\pi\pi'$ splits in $E$; 
\item  \label{it:prog-2} The image of $U_{4N}^\pm$ modulo $\pi$ is all of 
$(\O_E/\pi)^\times\simeq (\Z/p\Z)^\times$. 
 \end{enumerate}

Now  
if $v\in\Gamma$
is a lower triangular unipotent matrix with $Nk$ in the lower right hand corner then 
\[
vuw=
\begin{bmatrix}
m\\
Nc+Nkm
\end{bmatrix}.
\]
Choose $k$ so that $c+km$ is an odd prime $p$ satisfying \ref{it:prog-1} and \ref{it:prog-2} and $p$ does not divide $N$.
 
Now 
\[
vuw=
\begin{bmatrix}
m\\
Np
\end{bmatrix}.
\]

By Theorem~\ref{main2}, there exists a matrix $M_N(m,p)$ in $K(\Gamma)$ such that 
$M_N(m,p)e_1=vuw$.

Finally, we have to take care of the case where $c=1$.   Given
\[
w=
\begin{bmatrix}
a\\
N
\end{bmatrix},
\]
$a=\pm1+tN$.  Then premultiplying $w$ by the upper triangular unipotent matrix $u$ with $-t$ in the upper right hand corner, we have
\[
uw=
\begin{bmatrix}
\pm1\\
N
\end{bmatrix}.
\]
Then 
\[
v :=
\begin{bmatrix}
\pm1&0\\
N&\pm1
\end{bmatrix}\in K(\Gamma)
\]
and $u\in K(\Gamma)$,
so $u\inv v\in K(\Gamma)$ and
$u\inv ve_1 =w$.

Now let $\Gamma=\Gamma_1(N)$ 
and $\Gamma^\pm=\Gamma_1(N)^\pm$.
Part~\ref{it:main-a} asserts that $K(\Gamma^\pm)=\Gamma^\pm$.  This means that any $\gamma\in\Gamma^\pm$ is a product of unital and triangular elements of 
$\Gamma^\pm$.  However, inspection of its proof and the proof of Theorem~\ref{main2} shows that in fact we only need one unital element and that of determinant $1$:
  any $\gamma\in\Gamma^\pm$ can be written as 
\[
\gamma=  u_1vu_2Mu_3,
\] 
where
$u_1,u_2,u_3$ are upper triangular elements in $\Gamma^\pm$, $v$ is a lower
triangular element in $\Gamma^\pm$ and $M$ is a unital element in $\Gamma^\pm$ with 
$\det(M)=1$.

Let $S$ denote the 4-group $\{I,-I,s,-s\}$ where $s=\diag(1,-1)$.
Note that conjugation by $S$ stabilizes $\Gamma$, the set of upper triangular matrices, and the set of lower triangular matrices, and therefore also $K(\Gamma)$.  For any $x\in \Gamma^\pm$, there exists an element $t\in S$ such that $xt\in\Gamma$.
 
 Now suppose $\gamma\in\Gamma$.   We may write
\[
\gamma=u_4wu_5(Ms_1) u_6s_2,
\] 
where $s_1,s_2\in S$, $u_4,u_5,u_6$ are upper triangular elements in $\Gamma$, $w$ is a lower
triangular element in $\Gamma$ and $Ms_1$ is  in $\Gamma$.  From this it follows that $s_2\in\Gamma$ and hence $s_2=\pm I$ (in fact if $N>2$, $s_2=I$). Therefore 
\[
\pm\gamma=u_4wu_5(Ms_1) u_6.
\] 
Since $M$ has determinant $1$, as do $\gamma, u_4, w, u_5$ and $ u_6$, it follows that 
$\det(s_1)=1$ and therefore $s_1=\pm I$.
Since $-I$ is $\beta$-unital for any $\beta$, it follows that $M'=Ms_3$ is a unital element of 
$\Gamma$.  We obtain
\[
\pm\gamma=u_4wu_5M' u_6\in K(\Gamma).
\]
Now assume that $N>2$.  Then the left hand side is actually $\gamma$,
and therefore $K(\Gamma)=\Gamma$.  On the other hand, if $N=1$ or $2$, 
then $-I\in\Gamma$ and is triangular, so again $\gamma\in K(\Gamma)$ and therefore $K(\Gamma)=\Gamma$.
\end{proof}

\section{The cokernel of \texorpdfstring{$\psi_\Gamma$}{psi-Gamma} for other congruence subgroups
\texorpdfstring{$\Gamma\subset \GL_2(\Q)$}{Gamma subset GL2(Q)}}  \label{imgeneral}

Fix a ring $R$.  Let us view $\psi_\Gamma$ as a homomorphism with target
$H^\cusp(\Gamma,\St(\Q^2;R))$, so we may speak of its cokernel in this regard.
 In this section, we prove that the cokernel of $\psi_\Gamma$ is a finitely generated torsion module over $R$, for all congruence subgroups
$\Gamma\subset \GL_2(\Q)$.  

To start with, let  $\Gamma\subset \GL_2(\Q)$ be any subgroup.  Later in this section will assume that $\Gamma$ is a congruence subgroup.  This means that there exists an integer $N\ge1$ such that $\Gamma$ contains $\Gamma(N)$ with finite index, where $\Gamma(N)=\set{g\in\SL_2(\Z)\given g \equiv I \bmod N}$.
 
 First we prove some lemmas. 

\begin{lemma}\label{mglemma1}
Up to isomorphism, the cokernel of $\psi_\Gamma$ doesn't change up to isomorphism if 
$\Gamma$ is conjugated by a matrix $A\in\GL_2(\Q)$.
\end{lemma}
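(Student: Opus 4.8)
The plan is to exhibit an explicit isomorphism between the cokernel of $\psi_\Gamma$ and the cokernel of $\psi_{A\Gamma A\inv}$ induced by conjugation by $A$, for any $A\in\GL_2(\Q)$. The basic mechanism is that conjugation by $A$ is an automorphism of $\GL_2(\Q)$ carrying $\Gamma$ to $\Gamma':=A\Gamma A\inv$, and the short exact sequence \eqref{seq} of $\GL_2(\Q)$-modules is equivariant for this automorphism (more precisely, the functor ``multiply the $\Gamma$-action by conjugation-by-$A$'' sends the whole comparison of Steinberg modules for $\Q$ and $E$ to itself, since $A$ has rational entries and hence commutes with the formation of $\St(\Q^2;R)$, $\St(E^2;R)$ and $C$). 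Thus there is a commuting square relating the long exact homology sequence for $\Gamma$ to that for $\Gamma'$, and in particular a commuting square
\[
\xymatrix{
H_1(\Gamma,C)\ar[r]^-{\psi_\Gamma}\ar[d]_{c_A}^{\cong} & H_0^{\cusp}(\Gamma,\St(\Q^2;R))\ar[d]^{c_A}_{\cong}\\
H_1(\Gamma',C)\ar[r]^-{\psi_{\Gamma'}} & H_0^{\cusp}(\Gamma',\St(\Q^2;R)),
}
\]
where $c_A$ is the conjugation isomorphism on homology (it preserves the cuspidal part because $A$ acts on $\PP^1(\Q)$ and hence on $\DD(R)$ compatibly with $\partial$). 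A diagram chase then gives $\Coker(\psi_\Gamma)\cong\Coker(\psi_{\Gamma'})$.

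First I would set up the conjugation isomorphism on homology carefully: for a group $\Delta$, a subgroup $A\Delta A\inv$, and an $R\GL_2(\Q)$-module $M$, the map $m\mapsto A\inv m$ is an isomorphism from $M$ (with $\Delta$-action) to $M$ (with $A\Delta A\inv$-action twisted back through conjugation), and this induces the standard isomorphism $H_*(\Delta,M)\xrightarrow{\ \cong\ }H_*(A\Delta A\inv,M)$. Apply this with $M=\St(\Q^2;R)$, $\St(E^2;R)$, and $C$ simultaneously. Since \eqref{seq} is a sequence of $\GL_2(\Q)$-modules, these three isomorphisms are compatible with the maps in \eqref{seq}, so by naturality of the long exact sequence in homology they commute with all connecting maps, in particular with $\partial$ and hence with $\psi=-\partial$. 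That produces the square above (before restricting to the cuspidal part).

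Next I would check that $c_A$ carries $H_0^{\cusp}$ to $H_0^{\cusp}$. By Definition~\ref{defcusp} the cuspidal part is $\Ker(\partial\colon H_0(\Gamma,\St(\Q^2;R))\to\DD_\Gamma)$, and $\partial$ is a map of $\GL_2(\Q)$-modules by construction, so $c_A$ is compatible with $\partial$ on both sides; hence it restricts to an isomorphism on kernels. Therefore the displayed square with $H_0^{\cusp}$ in the corners commutes with vertical isomorphisms, and an elementary diagram chase (or just: an isomorphism of arrows induces an isomorphism of cokernels) yields $\Coker(\psi_\Gamma)\cong\Coker(\psi_{\Gamma'})$ as $R$-modules, which is the assertion.

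I do not expect a genuine obstacle here; the lemma is formal. The one point that needs a word of care is that $A$ has rational, not integral, entries, so $A$ need not normalize $\GL_2(\Z)$ and $\Gamma'=A\Gamma A\inv$ need not be a congruence subgroup of $\GL_2(\Z)$ — but the statement only claims an isomorphism of cokernels for $\Gamma'\subset\GL_2(\Q)$, and every ingredient above (the modules in \eqref{seq}, the map $\partial$ to $\DD(R)$, the definition of $H_0^{\cusp}$, and Shapiro's lemma for the $\beta$-orbits) is stated for arbitrary subgroups of $\GL_2(\Q)$, so nothing breaks. If one wants, one can phrase the whole argument more slickly by invoking Theorem~\ref{psiform}: under $c_A$ the generator $[\gamma]\otimes_R[e,b]'$ of $H_1(\Gamma_b,R_b)$ maps to (a generator related to) $[A\gamma A\inv]\otimes_R[Ae,Ab]'$, and $\psi_{\Gamma'}$ of the latter is $[Ae,A\gamma e]_{\Gamma'}$, which under the (basepoint-independence) Theorem~\ref{thmcusp}(2) matches $c_A([e,\gamma e]_\Gamma)$; but the naturality argument is cleaner and I would present that.
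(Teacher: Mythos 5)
Your proposal is correct and follows essentially the same route as the paper: the paper likewise invokes the $\GL_2(\Q)$-equivariance of the sequence \eqref{seq} and the functoriality of the long exact homology sequence to produce the commuting square with vertical conjugation isomorphisms (induced by conjugation on the group and multiplication by $A$ on coefficients), and concludes the isomorphism of cokernels. Your additional checks (that the conjugation map preserves $H_0^{\cusp}$, and that $A\Gamma A^{-1}$ need not lie in $\GL_2(\Z)$) are sensible elaborations of details the paper leaves implicit.
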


\begin{proof}

The short exact sequence~(\ref{seq}) is equivariant for $\GL_2(\Q)$.
The long exact sequence of group homology, including the connecting homomorphisms, is functorial.  Therefore, we obtain a commutative diagram:
\[
\xymatrix{
H_1(\Gamma,C)\ar[d]^-{f_1}\ar[r]^-{\psi_{\Gamma}}
&H_0^{\cusp}(\Gamma, \St(\Q^2;R))\ar[d]^-{f_0}
\\
H_1(A\Gamma A\inv, C)\ar[r]^-{\psi_{A\Gamma A\inv}}
&H_0^{\cusp}(A\Gamma A\inv, \St(\Q^2;R))
}
\]
where $f_1$ and $f_0$ are the maps induced by conjugation by $A$ on the group and multiplication by $A$ on the coefficients.  Since $f_1$ and $f_0$ are isomorphisms, the lemma follows.
\end{proof} 

\begin{lemma}\label{mglemma5}
Let  $\Gamma$ be a subgroup of 
$\GL_2(\Q)$.  Suppose  $K(\Gamma)$ has finite index in $\Gamma$.
Then $H_0^{\cusp}(\Gamma, \St(\Q^2;R))$ modulo the image of
$\psi_\Gamma$ is a finitely-generated torsion $R$-module.
\end{lemma}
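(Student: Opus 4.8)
The plan is to derive the lemma directly from Theorem~\ref{Psi*}, applied to the pair of subgroups $K(\Gamma)\subset\Gamma$. First note that $K(\Gamma)$ is genuinely a subgroup of $\Gamma$: by Definition~\ref{N3} it is generated by unital and triangular matrices, all of which lie in $\Gamma$. Taking $K=K(\Gamma)$ and $L=\Gamma$ in Theorem~\ref{Psi*}, and using Lemma~\ref{Kgen} to identify $\image(\psi_\Gamma)$ with the $R$-span $R[\image(\Psi_{K(\Gamma),\Gamma})]$, we see that the cokernel of $\psi_\Gamma$ in $H_0^{\cusp}(\Gamma,\St(\Q^2;R))$ is precisely the module
\[
X=H_0^{\cusp}(\Gamma,\St(\Q^2;R))/R[\image(\Psi_{K(\Gamma),\Gamma})]
\]
of that theorem. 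Hence $X$ is a quotient, as an $R$-module, of $Q^{ab}\otimes_\Z R$, where $Q=\Gamma/\widehat{K(\Gamma)}_\Gamma$ and $\widehat{K(\Gamma)}_\Gamma$ denotes the normal closure of $K(\Gamma)$ in $\Gamma$.

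It then remains only to observe that $Q$ is finite, which is purely group-theoretic. Since $K(\Gamma)$ has finite index in $\Gamma$ by hypothesis and $\widehat{K(\Gamma)}_\Gamma\supseteq K(\Gamma)$, the normal closure also has finite index in $\Gamma$, so $Q$ is a finite group. Therefore $Q^{ab}$ is a finite abelian group, say of exponent $n\ge1$, and $Q^{ab}\otimes_\Z R$ is generated over $R$ by the finitely many images of a generating set of $Q^{ab}$ and is annihilated by $n$. Thus $Q^{ab}\otimes_\Z R$, and hence its quotient $X$, is a finitely generated $R$-module killed by a positive integer, i.e. a finitely generated torsion $R$-module, as claimed.

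The substance here is entirely contained in Theorem~\ref{Psi*} and Lemma~\ref{Kgen}; there is no real obstacle, and the only thing requiring attention is the bookkeeping that matches the cokernel of $\psi_\Gamma$ with the module $X$. (Should one wish to avoid citing Theorem~\ref{Psi*}, one can reprove the needed special case in a few lines: by Lemma~\ref{hcusp}, $H_0^{\cusp}(\Gamma,\St(\Q^2;R))$ is the $R$-span of the classes $[e,\gamma e]_\Gamma$, $\gamma\in\Gamma$; by Lemma~\ref{Psilemma1} the map $\gamma\mapsto[e,\gamma e]_\Gamma$ is a homomorphism, and composing with the projection to the cokernel $X$ yields a homomorphism $\Gamma\to X$ that kills $\widehat{K(\Gamma)}_\Gamma$ because the target is abelian and $\image(\psi_\Gamma)$ already contains $[e,\gamma e]_\Gamma$ for $\gamma\in K(\Gamma)$ by Lemma~\ref{Kgen}; this factors through $Q^{ab}$ and produces a surjection $Q^{ab}\otimes_\Z R\twoheadrightarrow X$, whence the conclusion as above.)
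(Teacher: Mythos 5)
Your proof is correct and follows essentially the same route as the paper: apply Theorem~\ref{Psi*} with $K=K(\Gamma)$ and $L=\Gamma$, identify the cokernel of $\psi_\Gamma$ with the module $X$ there (via Lemma~\ref{Kgen}), and note that $Q$ is finite because the normal closure of a finite-index subgroup has finite index. Your extra remarks spelling out why a quotient of $Q^{ab}\otimes_\Z R$ is a finitely generated torsion $R$-module are fine but add nothing beyond what the paper leaves implicit.
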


\begin{proof}
In the notation of Theorem~\ref{Psi*}, set $K=K(\Gamma)$ and $L=\Gamma$.  Then $X$ is equal to $H_0^{\cusp}(\Gamma, \St(\Q^2;R))$ modulo the image of
$\psi_\Gamma$.  Therefore, it suffices to prove that $Q^{ab}$ is a finite group.
But $Q=L/\widehat K_L$ and since $K$ has finite index in $L$, so does $\widehat  K_L$.
\end{proof}

\begin{theorem}\label{maingeneral}
Let  $\Gamma$ be a subgroup of 
$\GL_2(\Q)$ that contains $\Gamma(M)$ with finite index for some $M$.  Assume GRH.  Then $H_0^{\cusp}(\Gamma, \St(\Q^2;R))$ modulo the image of
$\psi_\Gamma$ is a finitely-generated torsion $R$-module. 
\end{theorem}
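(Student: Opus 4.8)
The plan is to reduce Theorem~\ref{maingeneral} to Lemma~\ref{mglemma5} by showing that $K(\Gamma)$ has finite index in $\Gamma$, and the main work is establishing this index claim. First I would use Lemma~\ref{mglemma1} to replace $\Gamma$ by any $\GL_2(\Q)$-conjugate without changing the cokernel of $\psi_\Gamma$ up to isomorphism; after conjugating we may assume $\Gamma\subseteq\GL_2(\Z)$ and that $\Gamma$ contains $\Gamma(M)$ with finite index, i.e. $\Gamma$ is a genuine congruence subgroup of $\GL_2(\Z)$. (If $\Gamma\not\subseteq\SL_2(\Z)$ the same argument works with $\Gamma_1(M)^\pm$ in place of $\Gamma_1(M)$; the case of $\SL_2$ is the representative one.)

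The key step is then: $K(\Gamma)\supseteq K(\Gamma')$ whenever $\Gamma'\subseteq\Gamma$, since every unital matrix in $\Gamma'$ is a unital matrix in $\Gamma$ and likewise for triangular matrices. Applying this with $\Gamma'=\Gamma(M)\cap\Gamma$ — which by hypothesis has finite index in $\Gamma$ — and noting that $\Gamma(M)\subseteq\Gamma_1(M)$ and $\Gamma_1(M)$ is itself a congruence group, I would like to invoke Theorem~\ref{maintheorem}\ref{it:main-a}, which (assuming GRH) gives $K(\Gamma_1(M))=\Gamma_1(M)$. The subtle point is that $K(\Gamma(M))$ need not literally equal $\Gamma(M)$, only that $K$ of the \emph{larger} group $\Gamma_1(M)$ is everything; so instead I would argue as follows. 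Set $\Gamma_0=\Gamma\cap\Gamma_1(M)$; this has finite index in $\Gamma$ because $\Gamma\supseteq\Gamma(M)$ with finite index and $\Gamma(M)\subseteq\Gamma_1(M)$. Now $\Gamma_0$ is a congruence subgroup of finite index in $\Gamma_1(M)$, so it suffices to prove the index statement when $\Gamma$ itself contains $\Gamma(M)$ with finite index \emph{and} is contained in $\Gamma_1(M')$ for a suitable $M'$; but in fact the cleanest route is to prove the general claim directly: if $\Gamma$ contains $\Gamma(M)$ with finite index, then $K(\Gamma)$ contains $\Gamma(M')$ for a suitable multiple $M'$ of $M$. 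For this I would run the proof of Lemma~\ref{mainlemma} verbatim but keeping track of congruences: given $\delta\in\Gamma(M)$, the same chain of pre- and post-multiplications by unipotent upper- and lower-triangular matrices in $\Gamma(M)$ (legitimate since the triangular unipotents used have entries divisible by $M$, hence lie in $\Gamma(M)\subseteq K(\Gamma)$) together with one unital matrix of determinant $1$ produced by Theorem~\ref{main2} applied to level $M$ realizes $\delta e_1$ as $\gamma e_1$ for some $\gamma\in K(\Gamma)$; the residual $\gamma^{-1}\delta$ stabilizes $e_1$, is triangular, and lies in $\Gamma(M)$, hence in $K(\Gamma)$. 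Thus $\Gamma(M)\subseteq K(\Gamma)\subseteq\Gamma$, and since $[\Gamma:\Gamma(M)]<\infty$ this gives $[\Gamma:K(\Gamma)]<\infty$.

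Having established that $K(\Gamma)$ has finite index in $\Gamma$, Lemma~\ref{mglemma5} immediately gives that $H_0^{\cusp}(\Gamma,\St(\Q^2;R))$ modulo $\image(\psi_\Gamma)$ is a finitely generated torsion $R$-module, completing the proof. The main obstacle is the bookkeeping in the previous paragraph: one must check that all the triangular and unipotent matrices appearing in the proofs of Lemma~\ref{mainlemma} and Theorem~\ref{main2}, when that argument is re-run at level $M$ rather than level $1$, actually lie in $\Gamma(M)$ (equivalently in $K(\Gamma)$), and that Theorem~\ref{lenstratheorem} and Dirichlet's theorem still supply the needed split prime $p$ with $p\equiv c\bmod m$ and $p\nmid M$, where now one works with $U_{4M}^\pm$ in place of $U_{4N}^\pm$. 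Once the level-$M$ version of the $\Gamma_1$ argument is in hand the rest is formal, but verifying that no step secretly requires a matrix outside $\Gamma(M)$ — in particular that the single unital matrix produced has determinant $1$ and sits in $\Gamma(M)$ after the conjugation and premultiplication steps — is where care is required.
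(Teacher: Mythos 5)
Your overall skeleton is the right one (reduce to Lemma~\ref{mglemma5} by showing $K(\Gamma)$ has finite index in $\Gamma$, after a conjugation justified by Lemma~\ref{mglemma1}), but the way you fill in the key step has a genuine gap. The paper's proof conjugates by the specific matrix $A=\diag(1,M)$: one checks that $A\,\Gamma(M)\,A\inv\supset\Gamma_1(M^2)$, so after replacing $\Gamma$ by $A\Gamma A\inv$ one may assume $\Gamma_1(N)\subset\Gamma$ with finite index for $N=M^2$. Then the monotonicity you correctly state, $K(\Gamma')\subseteq K(\Gamma)$ for $\Gamma'\subseteq\Gamma$, combined with Theorem~\ref{maintheorem} applied to $\Gamma_1(N)$ as a black box, gives $\Gamma_1(N)=K(\Gamma_1(N))\subseteq K(\Gamma)$ and hence the finite-index claim. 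You conjugate only to get into $\GL_2(\Z)$, which does not help: the obstruction you correctly identify --- that $\Gamma$ may contain $\Gamma(M)$ without containing any $\Gamma_1(N)$ --- remains, and the whole point of choosing $A=\diag(1,M)$ is to remove it.

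Your substitute, namely re-running Lemma~\ref{mainlemma} and Theorem~\ref{main2} ``verbatim'' at level $M$ with all matrices forced into $\Gamma(M)$, does not work as stated. Two concrete failures: (1) the proof of Theorem~\ref{main2} corrects the unital matrix $M'$ by premultiplying by $\left[\begin{smallmatrix}1&-s/M\\0&1\end{smallmatrix}\right]$, where the argument only yields $M\mid s$; there is no reason for $M\mid(s/M)$, so this unipotent need not lie in $\Gamma(M)$, contrary to your claim that all triangular unipotents used have entries divisible by $M$. (2) More seriously, the unital matrix $M'$ itself has uncontrolled upper-right entry modulo $M$, so it lies in $\Gamma_1(M)^\pm$ but in general not in $\Gamma(M)$; since $K(\Gamma)$ is generated only by unital and triangular matrices that actually belong to $\Gamma$, and $\Gamma$ could be exactly $\Gamma(M)$, this matrix is not available to you. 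So the ``bookkeeping'' you defer is not bookkeeping: the level-$M$ rerun genuinely requires matrices outside $\Gamma(M)$, and one would have to strengthen Theorem~\ref{main1}/\ref{main2} to control the upper-right entry as well. The conjugation by $\diag(1,M)$ is precisely the device that makes this unnecessary.
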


\begin{proof}
Conjugating by 
 $A=\diag(1,M)$, we see that for $N=M^2$,  
 $\Gamma_1(N)\subset A\Gamma A\inv$.
So by Lemma~\ref{mglemma1},
we may replace $\Gamma$ by $A\Gamma A\inv$;
and without loss of generality, for some $N$, we assume that 
$\Gamma_1(N)\subset \Gamma$  with finite index.

Then by Theorem~\ref{maintheorem} we have
\[
\Gamma_1(N)=K(\Gamma_1(N))\subset K(\Gamma) \subset \Gamma
\]
from which it follows that $K(\Gamma)$ has finite index in $\Gamma$.
We are finished by 
Lemma~\ref{mglemma5}.
\end{proof}

\section{The image of \texorpdfstring{$\psi_\Gamma$}{psiGamma} for \texorpdfstring{$\Gamma_0(N)^\pm$}{Gamma0(N)+-} and for \texorpdfstring{$\Gamma_0(N)$}{Gamma0(N)}
}
\label{im0}

 Let $\Gamma$ be either $\Gamma_0(N)^\pm$ or $\Gamma_0(N)$.  
Set $\Gamma_1=\Gamma_1(N)^\pm$ if $\Gamma=\Gamma_0(N)^\pm$ and 
$\Gamma_1=\Gamma_1(N)$ if $\Gamma=\Gamma_0(N)$.  
 In the first case, the quotient $\Gamma/\Gamma_1$ 
 is isomorphic to $(\Z/N\Z)^\times/\set{\pm1}$ and in the 
second case, the quotient is isomorphic to $(\Z/N\Z)^\times$.

\begin{lemma}\label{root}
Let $\eta\in \O_E^\times$ and $f_\eta(x)=x^2-tx+n\in\Z[x]$ its characteristic polynomial.  
Then $a\in(\Z/N\Z)^\times$ is a root of $f_\eta$ modulo $N$ if and only if
\[
a+na\inv\equiv t\bmod N.
\]
\end{lemma}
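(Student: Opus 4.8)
The plan is to unwind both sides of the claimed equivalence as plain congruences in $\Z/N\Z$ and pass between them by multiplying by $a$ or by $a\inv$, which is legitimate precisely because $a$ is assumed to lie in $(\Z/N\Z)^\times$. First I would record the shape of the characteristic polynomial: since $\eta\in\O_E^\times$, we have $f_\eta(x)=x^2-tx+n$ with $t=\tr(\eta)=\eta+\eta'$ and $n=\eta\eta'=\pm1$ the norm of $\eta$. By definition, the statement ``$a$ is a root of $f_\eta$ modulo $N$'' means exactly $a^2-ta+n\equiv 0\bmod N$.

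Now, because $a\in(\Z/N\Z)^\times$, multiplying this congruence through by $a\inv$ yields $a-t+na\inv\equiv 0\bmod N$, that is, $a+na\inv\equiv t\bmod N$; conversely, multiplying $a+na\inv\equiv t\bmod N$ by $a$ returns $a^2-ta+n\equiv 0\bmod N$. This gives the equivalence in both directions. There is essentially no obstacle: the only point requiring any care is that passing between the two congruences uses invertibility of $a$ modulo $N$, which is exactly the hypothesis $a\in(\Z/N\Z)^\times$ (and this also makes the expression $na\inv$ meaningful). The fact that $n=\pm1$ is not needed for the equivalence itself, though it is what makes the reformulation useful in the sequel, where $a+na\inv$ becomes a clean condition to test.
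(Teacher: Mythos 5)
Your proof is correct and is essentially the same elementary argument as the paper's: both directions amount to multiplying the congruence $a^2-ta+n\equiv 0$ by $a^{\mp 1}$, which the paper phrases slightly differently by saying the other root of $f_\eta$ modulo $N$ is $na\inv$ and invoking the sum of the roots. The only cosmetic difference is that the paper re-derives the invertibility of $a$ from the product of the roots being $\pm 1$, whereas you simply use the hypothesis $a\in(\Z/N\Z)^\times$; both are fine.
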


\begin{remark}
Let prime denote Galois conjugate.  Then  $t=\eta+\eta'$ is the trace of $\eta$ and $n=\eta\eta'=\pm1$ is its norm.
\end{remark}

\begin{proof}
Suppose $a$ is a root of $f_\eta$ modulo $N$.  Then $a$ is invertible mod $N$ because the product of the two roots is $\pm1$.  So the other root is $na\inv$
and $a+na\inv\equiv t\bmod N$.  Conversely, if $a+na\inv\equiv t\bmod N$, then $f_\eta(x)$ modulo $N$ is $x^2-(a+na\inv)x+n$ and $a$ is a root.
\end{proof}

\begin{lemma}\label{gamma0}
Let $\eta\in \O_E^\times$ and let $f_\eta(x)=x^2-tx+n\in\Z$ be its characteristic polynomial. 
Suppose $a\in(\Z/N\Z)^\times$ is a root of $f_\eta$ modulo $N$.
Then there exists a unital $\gamma\in\GL_2(\Z)$ such that the determinant 
of $\gamma$ equals the norm of $\eta$ and 
\[
\gamma\equiv
\begin{bmatrix}
a&*\\
0&na\inv
\end{bmatrix} \bmod N.
\]

Conversely, if $\gamma\in\Gamma_0(N)^\pm$ is unital then its upper left hand corner modulo $N$ is a root of $f_\eta$ modulo $N$ for some 
$\eta\in \O_E^\times$ whose norm equals the determinant of $\gamma$.
\end{lemma}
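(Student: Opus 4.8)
The plan is to produce $\gamma$ explicitly by the same device used in Theorem~\ref{main1}, namely by conjugating the diagonal matrix $\diag(\eta,\eta')$ into $\GL_2(\Z)$ via a suitably chosen $\beta$, and then to read off the reduction modulo $N$. First I would fix $\beta\in E\setminus\Q$ of the form $\beta=x+y\sqrt{\Delta}$ with $x,y\in\Q$ to be chosen, and form $\rho_\beta(\eta)$ as in Section~\ref{stab}; concretely, writing $\eta=e+f\sqrt{\Delta}$ (allowing half-integers when $\Delta\equiv1\bmod4$), the matrix $\rho_\beta(\eta)$ has entries that are rational functions of $x,y,e,f,\Delta$, exactly as displayed in the proof of Theorem~\ref{main1}. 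The requirement ``$N\mid C$ and $A\equiv a\bmod N$'' is the analogue of the condition ``$p\mid C$ and $A\equiv a\bmod p$'' there. So I would choose $y$ divisible by a suitable power of $N$ in its denominator and $x_0$ (the numerator of $2Ny\cdot x$, say) a lift of $\sqrt{\Delta}$ modulo $N$; this last lift exists whenever we can control things modulo $N$, but since $a$ is merely assumed to be a root of $f_\eta$ mod $N$ and not tied to a split prime, the honest way to get $\beta$ is to work prime-by-prime: for each prime $\ell\mid N$, the congruence $a+na^{-1}\equiv t\bmod\ell^{v_\ell(N)}$ says precisely that $a$ is an eigenvalue of the reduction of $\rho_\beta(\eta)$ for an appropriate $\beta$, and then one patches by CRT. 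The upshot is a matrix $\gamma=\rho_\beta(\eta)\in M_2(\Z)$ with $\det\gamma=N(\eta)$, hence in $\GL_2(\Z)$, with $C\equiv0$ and $A\equiv a\bmod N$.

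Once $C\equiv 0\bmod N$ and $A\equiv a$, the congruence $AD-BC\equiv AD\equiv \det\gamma=n\bmod N$ forces $D\equiv na^{-1}\bmod N$ (note $a$ is a unit mod $N$ since $a\cdot(na^{-1})\equiv n=\pm1$), which gives the asserted shape
\[
\gamma\equiv\begin{bmatrix} a&*\\ 0&na^{-1}\end{bmatrix}\bmod N.
\]
I expect the main obstacle to be the first direction: verifying that a single $\beta$ can be chosen so that \emph{all} the integrality and congruence conditions hold simultaneously modulo the full modulus $N$ (not just modulo a single split prime as in Theorem~\ref{main1}), including the subtle $2$-adic issue when $\Delta\equiv1\bmod4$ and $e,f$ are half-integers. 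This is where the parity bookkeeping of Theorem~\ref{main1} has to be redone with $p$ replaced by $N$, and one must be slightly careful that $\sqrt{\Delta}$ admits a lift $x_0$ with $x_0\equiv\Delta$-parity working modulo $4N$ rather than modulo $4p$; the CRT patching handles the odd part of $N$ cleanly, and the even part is exactly the half-integer case already analyzed.

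For the converse, suppose $\gamma=\begin{bmatrix}A&B\\C&D\end{bmatrix}\in\Gamma_0(N)^\pm$ is unital, say $\beta$-unital for $\beta\in E\setminus\Q$. By Section~\ref{stab} there is $\eta=x(\gamma)\in\O_E^\times$ with $\gamma\begin{bmatrix}\beta\\1\end{bmatrix}=\eta\begin{bmatrix}\beta\\1\end{bmatrix}$, and the computation in the proof of the theorem in Section~\ref{stab} shows that $\eta$ satisfies $\eta^2-\tr(\gamma)\eta+\det(\gamma)=0$, so the characteristic polynomial of $\gamma$ is $f_\eta(x)=x^2-tx+n$ with $t=\tr\gamma$ and $n=\det\gamma=N(\eta)$. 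Since $\gamma\in\Gamma_0(N)^\pm$ we have $C\equiv0\bmod N$, so reducing $\gamma$ mod $N$ gives an upper triangular matrix with diagonal entries $A,D$; its characteristic polynomial mod $N$ is $(x-A)(x-D)=f_\eta(x)\bmod N$, so $A$ (being a root of this) is a root of $f_\eta$ modulo $N$. Finally $A$ is invertible mod $N$ because $AD\equiv\det\gamma=\pm1\bmod N$, so $A\in(\Z/N\Z)^\times$, as required.
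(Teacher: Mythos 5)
Your converse direction is correct and is essentially the paper's argument: the characteristic polynomial of a unital $\gamma$ equals $f_\eta$, and reducing the $\Gamma_0(N)^\pm$-matrix mod $N$ makes it upper triangular, so $A$ is a root; invertibility of $A$ follows from $AD\equiv\pm1$. The derivation of the shape $\left[\begin{smallmatrix}a&*\\0&na\inv\end{smallmatrix}\right]$ from $AD\equiv n$ is also fine.

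The forward direction, however, has a genuine gap. Your plan is to realize $\gamma$ as $\rho_\beta(\eta)$ by redoing the computation of Theorem~\ref{main1} with $p$ replaced by $N$, choosing $x_0$ to be a lift of $\sqrt{\Delta}$ modulo $N$ (prime by prime, patched by CRT). But in Theorem~\ref{main1} the existence of $x_0$ with $\sqrt{\Delta}-x_0\in\fp$ comes precisely from the hypothesis that $p$ splits in $E$. For a general modulus $N$ no such lift need exist: the hypothesis that $a$ is a root of $f_\eta$ mod $N$ only forces $t^2-4n=(a-na\inv)^2$ to be a square mod $N$, and since $t^2-4n$ is $\Delta$ times a square that can vanish mod $N$, this does not make $\Delta$ a quadratic residue mod the primes dividing $N$. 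Concretely, take $E=\Q(\sqrt2)$, $\eta=(1+\sqrt2)^4=17+12\sqrt2$, $N=3$: then $f_\eta\equiv(x+1)^2\bmod 3$ has the root $a=2$, yet $3$ is inert in $E$ and $2$ is not a square mod $3$, so your choice of $x_0$ is impossible and the claim that ``for each prime $\ell\mid N$ \dots\ $a$ is an eigenvalue of the reduction of $\rho_\beta(\eta)$ for an appropriate $\beta$'' is exactly what needs proof. The paper sidesteps the search for $\beta$ entirely: it lifts the target residue class $\left[\begin{smallmatrix}a&*\\0&na\inv\end{smallmatrix}\right]$ to some $g\in\GL_2(\Z)$, then adjusts $g$ by a rational diagonal conjugation and a unipotent factor so that its trace is exactly $t$ and its determinant exactly $n$, i.e.\ its characteristic polynomial is exactly $f_\eta$; since $\eta\ne\eta'$ are the eigenvalues, $\gamma$ is automatically diagonalizable over $E$ with an eigenvector $(\beta,1)^t$, $\beta\in E\setminus\Q$, which is what makes it unital. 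That final step — getting $\beta$ as an output of the construction rather than an input — is the idea your proposal is missing.
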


\begin{proof} If $\eta=\pm1$, take $\gamma=\pm I$.  So now assume that
$\eta\ne\pm1$.
Lift $\begin{bmatrix}
a&*\\
0&na\inv
\end{bmatrix}$
to a matrix
$
g=\begin{bmatrix}
A&B\\
C&D
\end{bmatrix}\in \GL_2(\Z)
$ that is congruent to it modulo $N$.

Then $AD-BC\equiv n$, $A+D\equiv a+na\inv$, and $C\equiv 0 \bmod N$.
So the characteristic polynomial of $g$ is congruent to $f_\eta$.  We need to modify $g$ so that its characteristic polynomial equals $f_\eta$ on the nose.

Write $C=cN$.  Conjugate $g$ by $d_c=\diag(c,c\inv)\in \GL_2(\Q)$.  We obtain
\[
g_1=d_cgd_c\inv=
\begin{bmatrix}
A&cB\\
N&D
\end{bmatrix}=
\begin{bmatrix}
A&cB\\
N&t-A-rN
\end{bmatrix}\in \GL_2(\Z),
\]
for some $r\in\Z$.
Post multiply $g_1$ by the upper triangular unipotent matrix $u_r$ with $r$ in the upper right corner, and call the result $\gamma$:
\[
\gamma=
g_1u_r=
\begin{bmatrix}
A&cB\\
N&t-A-rN
\end{bmatrix}
\begin{bmatrix}
1&r\\
0&1
\end{bmatrix}=
\begin{bmatrix}
A&B_1\\
N&t-A
\end{bmatrix}\in \GL_2(\Z).
\]
Note that the trace of $\gamma$ is $t$ and the determinant of $\gamma$ is $\pm1$ and congruent to $n$ modulo $N$, and therefore $\det\gamma=n$ if $N\ge3$.  If $N=1$ or $2$, and if $\det\gamma=-n$, go back to the beginning and redefine $g$ by replacing $B,D$ with $-B,-D$.  That has the effect of changing the sign of the determinant.  So for any $N$, we now have $\gamma$ whose trace is $t$ and whose determinant is $n$.

Therefore the characteristic polynomial of $\gamma$ is $f_\eta$, and
\[
\gamma\equiv
\begin{bmatrix}
a&*\\
0&na\inv
\end{bmatrix}\bmod N.
\]
It remains to show that $\gamma$ is unital.

The eigenvalues of $\gamma$ are $\eta$ and $\eta'\in E$ which are not equal.  Therefore, $\gamma$ is diagonalizable over $E$, with an eigenvector
$\begin{bmatrix}
\beta\\
1
\end{bmatrix}$
for some $\beta\in E\setminus\Q$.  So
\[\gamma\begin{bmatrix}
\beta\\
1
\end{bmatrix}=
\eta\begin{bmatrix}
\beta\\
1
\end{bmatrix},\]
and $\gamma$ is $\beta$-unital.

Conversely, suppose 
\[
\gamma=
\begin{bmatrix}
A&B\\
cN&D
\end{bmatrix}
\in\GL_2(\Z)
\] 
is unital.
Then there exists 
$\eta\in \O_E^\times$ and $\beta\in E \setminus Q$ such that
\[\gamma\begin{bmatrix}
\beta\\
1
\end{bmatrix}=
\eta\begin{bmatrix}
\beta\\
1
\end{bmatrix}.
\]
The characteristic polynomial of $\gamma$ is the same as 
the characteristic polynomial of $\eta$, namely $f_\eta$.   In particular the norm of $\eta$ equals the determinant of $\gamma$.
Also the characteristic polynomial of $\gamma$  is
congruent modulo $N$ to $(x-A)(x-D)$ and has $A\bmod N$ as a root.
\end{proof}

\begin{definition}\label{AE}
Let $E$ be a real quadratic field and $N$ a positive integer. 
 
Define $A_E(N)$ to be the subgroup of $(\Z/N\Z)^\times/\set{\pm1}$ generated by the images of those $a\in (\Z/N\Z)^\times$ such that $a$ is a root of $f_\eta$ for 
some $\eta\in \O_E^\times$.

Define $A_E(N)^*$ to be the subgroup of $(\Z/N\Z)^\times$ generated by those $a\in (\Z/N\Z)^\times$ such that $a$ is a root of $f_\eta$ for some $\eta\in \O_E^\times$ of norm 1.
\end{definition}

\begin{remark}
Given $N$ and $E$, it is a finite computation to determine $A_E(N)$.  This is because the powers of $\overline\epsilon:=$ the image of $\epsilon$ in $\O_E/N\O_E$ constitute a finite set $Q$, which can be found be seeing when 
$\overline\epsilon, \overline\epsilon^2,\dots$ begins to repeat.  Then by 
Lemma~\ref{root}, $a$ is a root of
$f_\eta$ for some $\eta\in \O_E^\times$ if and only if 
$
a+na\inv= t
$,
where $t=\overline\epsilon^k+\overline{\epsilon'}^k$ and $n=\overline\epsilon^k\overline{\epsilon'}^k$ for some $\overline\epsilon^k\in Q$.
However, we do not know any simple formula that describes $A_E(N)$ given $N$ and $E$.  In the range of our computations ($N \leq 1000, \Delta \leq 50$), there are $6846$ different groups that arise for $A_E(N)$ and $6419$ for $A_E(N)^*$.  A sample ($N \leq 20$) is given in Tables~\ref{tab:ane} and \ref{tab:SL-ane}.

We do not know how to predict $A_E(N)$ from $A_E(N)^*$ or vice versa, in general.  Of course, $A_E(N)^*$ equals $A_E(N)/\set{\pm1}$ if $E$ has no unit of norm $-1$, but otherwise, it may be equal or it may have index two.
\end{remark}

Let $\pi\colon \Gamma_0(N)^\pm\to (\Z/N\Z)^\times/\set{\pm1}$ be the surjective homomorphism that sends a matrix to its upper left hand corner modulo $N$ and then modulo $\pm1$. 
Similarly, let $\pi^*\colon \Gamma_0(N)\to (\Z/N\Z)^\times$ be the surjective homomorphism that sends a matrix to its upper left hand corner modulo $N$.  

\begin{theorem} \label{00}
  Assume GRH.  Let $E$ be a real quadratic field.
Then $K(\Gamma_0(N)^\pm,E)=\pi\inv({A_E(N)})$ 
and
   $K(\Gamma_0(N),E)=(\pi^*)\inv(A_E(N)^*)$. 
\end{theorem}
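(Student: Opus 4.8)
The plan is to prove the two equalities by a double inclusion, treating $K(\Gamma_0(N)^\pm,E)$ first and then deducing the $\Gamma_0(N)$ case by the same argument with norm-$1$ units. For the inclusion $K(\Gamma_0(N)^\pm,E)\subseteq\pi^{-1}(A_E(N))$, I would observe that $\pi$ is a homomorphism to an abelian group, so it suffices to check that every generator of $K(\Gamma_0(N)^\pm,E)$ lands in $A_E(N)$. Triangular matrices in $\Gamma_0(N)^\pm$ have upper-left corner $\pm1$, which is a root of $f_{\pm1}$, hence lies in $A_E(N)$; and for a unital matrix $\gamma\in\Gamma_0(N)^\pm$, the converse half of Lemma~\ref{gamma0} says exactly that its upper-left corner modulo $N$ is a root of $f_\eta$ for some $\eta\in\O_E^\times$, hence its image under $\pi$ lies in $A_E(N)$. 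This gives one containment with no use of GRH.

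For the reverse inclusion $\pi^{-1}(A_E(N))\subseteq K(\Gamma_0(N)^\pm,E)$, the idea is: since $K(\Gamma_0(N)^\pm,E)$ contains all triangular matrices of $\Gamma_0(N)^\pm$, and in particular (by the argument in the proof of Theorem~\ref{maintheorem}, restricted to $\Gamma_1(N)^\pm\subseteq\Gamma_0(N)^\pm$) contains a large enough subgroup to realize any target vector $(a,Nc)^T$ with $\gcd(a,c)=1$, the quotient $K(\Gamma_0(N)^\pm,E)/\Gamma_1(N)^\pm$ is well-defined inside $\Gamma_0(N)^\pm/\Gamma_1(N)^\pm\cong(\Z/N\Z)^\times/\{\pm1\}$, and it maps onto its image under $\pi$. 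Concretely: given $a\in(\Z/N\Z)^\times$ with $\bar a\in A_E(N)$, I want to produce a matrix in $K(\Gamma_0(N)^\pm,E)$ with upper-left corner $\equiv a\pmod N$. Because $A_E(N)$ is generated by roots of characteristic polynomials $f_\eta$, and $\pi$ is a homomorphism, it is enough to do this when $a$ itself is such a root; then the forward direction of Lemma~\ref{gamma0} directly produces a unital $\gamma\in\GL_2(\Z)$ with $\gamma\equiv\begin{bmatrix}a&*\\0&na^{-1}\end{bmatrix}\pmod N$, so $\gamma\in\Gamma_0(N)^\pm$ is unital, hence $\gamma\in K(\Gamma_0(N)^\pm,E)$ and $\pi(\gamma)=\bar a$. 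Finally, any element of $\pi^{-1}(A_E(N))$ differs from such a $\gamma$ by an element of $\ker\pi=\Gamma_1(N)^\pm$, and by Theorem~\ref{maintheorem}\eqref{it:main-a} (which needs GRH) $\Gamma_1(N)^\pm=K(\Gamma_1(N)^\pm,E)\subseteq K(\Gamma_0(N)^\pm,E)$; so the element itself lies in $K(\Gamma_0(N)^\pm,E)$.

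The $\Gamma_0(N)$ statement follows by running the identical argument, but now every relevant unital matrix must have determinant $1$ (since $\Gamma_0(N)\subseteq\SL_2(\Z)$), so in the forward direction of Lemma~\ref{gamma0} we only use units $\eta$ of norm $1$, and in the converse direction the determinant of a unital $\gamma\in\Gamma_0(N)$ is $1$, hence the corresponding $\eta$ has norm $1$; this matches exactly the definition of $A_E(N)^*$. One must also use $K(\Gamma_1(N),E)=\Gamma_1(N)$ from Theorem~\ref{maintheorem} to fill in the kernel $\ker\pi^*=\Gamma_1(N)$.

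The step I expect to be the main obstacle is the bookkeeping in the reverse inclusion: making sure that the matrix produced by Lemma~\ref{gamma0} genuinely lies in $\Gamma_0(N)^\pm$ (resp.\ $\Gamma_0(N)$) with the correct corner, and that reducing the general case of a root $a$ of $f_\eta$ to a product of generators of $A_E(N)$ is compatible with products in $K(\Gamma_0(N)^\pm,E)$ — this is where the homomorphism property of $\pi$ together with Lemma~\ref{S} (spans generated by a set equal spans generated by the group it generates) does the work. The only place GRH enters is through the invocation of Theorem~\ref{maintheorem} to identify the kernel of $\pi$ (resp.\ $\pi^*$) as a subgroup of $K$; everything else is unconditional and essentially a repackaging of Lemma~\ref{gamma0}.
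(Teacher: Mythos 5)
Your proposal is correct and follows essentially the same route as the paper: the forward inclusion uses the converse half of Lemma~\ref{gamma0} together with the fact that triangular matrices land in $\Gamma_1(N)^\pm$ (hence in $A_E(N)$), and the reverse inclusion uses the constructive half of Lemma~\ref{gamma0} to realize each generator of $A_E(N)$ (resp.\ $A_E(N)^*$, via norm-one units) by a unital matrix, then absorbs the kernel $\Gamma_1(N)^\pm$ (resp.\ $\Gamma_1(N)$) using Theorem~\ref{maintheorem} under GRH. Your explicit reduction to generators of $A_E(N)$ via the homomorphism property of $\pi$ is in fact slightly more careful than the paper's phrasing, which speaks of a single unital $\delta$ where a product of unital matrices is what is actually produced.
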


The following corollary follows immediately from the theorem and Lemma~\ref{Kgen} :
\begin{corollary}  Assume GRH.  Let $E$ and $F$ be real quadratic fields.
  If $A_N(E)=A_N(F)$, then the cokernels of 
    $\psi_{\Gamma_0(N)^\pm,E}$ and
  $\psi_{\Gamma_0(N)^\pm,F}$ are the same. 
  If $A_N(E)^* = A_N(F)^*$ 
  then the cokernels of $\psi_{\Gamma_0(N),E}$ 
  and $\psi_{\Gamma_0(N),F}$
  are the same. 
\end{corollary}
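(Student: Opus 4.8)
The plan is to obtain the corollary as a purely formal consequence of Theorem~\ref{00} and Lemma~\ref{Kgen}: the point is that both the subgroup $K(\Gamma)$ and the module in which the image of $\psi_\Gamma$ sits turn out to be independent of the particular real quadratic field, which enters only through the invariant $A_E(N)$ (resp. $A_E(N)^*$). Note at the outset that $\Gamma_0(N)^\pm$ and $\Gamma_0(N)$, and hence $H_0^{\cusp}$ of each, are defined with no reference to $E$, so the field-dependence of the whole picture is concentrated entirely in $K(\Gamma,E)$.

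First fix $N$ and the coefficient ring $R$ and set $\Gamma=\Gamma_0(N)^\pm$. Assume $A_E(N)=A_F(N)$. By Theorem~\ref{00}, $K(\Gamma,E)=\pi\inv(A_E(N))=\pi\inv(A_F(N))=K(\Gamma,F)$ as subgroups of $\Gamma$; call this common subgroup $K$. Now apply Lemma~\ref{Kgen} to $\psi_{\Gamma,E}$ and to $\psi_{\Gamma,F}$, both viewed as homomorphisms into $H_0^{\cusp}(\Gamma,\St(\Q^2;R))$ as in Section~\ref{imgeneral}. The lemma identifies $\image(\psi_{\Gamma,E})$ with the $R$-span of $\set{[e,ge]_\Gamma \given g\in K(\Gamma,E)}=\set{[e,ge]_\Gamma \given g\in K}$ and $\image(\psi_{\Gamma,F})$ with the $R$-span of $\set{[e,ge]_\Gamma \given g\in K}$. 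These generating sets are literally identical, so the two images coincide as submodules of $H_0^{\cusp}(\Gamma,\St(\Q^2;R))$, and since that target module involves neither $E$ nor $F$, the quotients $H_0^{\cusp}(\Gamma,\St(\Q^2;R))/\image(\psi_{\Gamma,E})$ and $H_0^{\cusp}(\Gamma,\St(\Q^2;R))/\image(\psi_{\Gamma,F})$ are equal. For the second assertion, replace $\Gamma_0(N)^\pm$ and $\pi$ by $\Gamma_0(N)$ and $\pi^*$: Theorem~\ref{00} gives $K(\Gamma_0(N),E)=(\pi^*)\inv(A_E(N)^*)$, so $A_E(N)^*=A_F(N)^*$ forces $K(\Gamma_0(N),E)=K(\Gamma_0(N),F)$, and the remainder of the argument is verbatim the same.

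There is essentially no obstacle beyond bookkeeping; the only point requiring a moment's care is to state the conclusion as an equality of quotient modules inside a single fixed target, rather than merely an isomorphism, since it is this reading that makes the statement immediate from the two cited results rather than requiring any further comparison of the two long exact sequences.
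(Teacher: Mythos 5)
Your proof is correct and follows exactly the route the paper intends: the paper states that the corollary ``follows immediately from the theorem and Lemma~\ref{Kgen},'' and you have unpacked precisely that argument, using Theorem~\ref{00} to identify $K(\Gamma,E)$ with $K(\Gamma,F)$ and Lemma~\ref{Kgen} to see that the two images, hence cokernels, coincide inside the common target $H_0^{\cusp}(\Gamma,\St(\Q^2;R))$. No further comment is needed.
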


Now we prove the theorem:

\begin{proof}
 The group $K(\Gamma_0(N)^\pm)$ is generated by
$K(\Gamma_1(N)^\pm)$ and the unital elements in $\Gamma_0(N)^\pm$ (since the triangular elements of $\Gamma_0(N)^\pm$ are already in $\Gamma_1(N)^\pm$.) 
Let prime denote reduction modulo $ \{\pm I\}$.
By definition,  
$\pi(K(\Gamma_1(N)^\pm))\subset \{\pm 1\}'\subset A_E(N)$. 
By Lemma~\ref{gamma0}, if $\gamma\in \Gamma_0(N)^\pm$ is unital, then $\pi(\gamma)\in
A_E(N)$.  
So $\pi(K(\Gamma_0(N)^\pm))\subset A_E(N)$.
Conversely,
suppose $\gamma\in \Gamma_0(N)^\pm$ and $\pi(\gamma)\in A_E(N)$.  By Lemma~\ref{gamma0}, there exists a unital 
$\delta\in \Gamma_0(N)^\pm$
such that $\gamma\delta\inv$ has its upper left corner congruent to 1 modulo $N$.  So $\gamma\delta\inv\in\Gamma_1(N)^\pm$ and hence in 
$K(\Gamma_1(N)^\pm)$ by Theorem~\ref{maintheorem}.
So both $\delta$ and $\gamma\delta\inv$ are in $K(\Gamma_0(N)^\pm)$,
and hence so is $\gamma$.

For the second assertion, notice that the group $K(\Gamma_0(N))$ is generated by
$K(\Gamma_1(N))$ and the unital elements in $\Gamma_0(N)$,
including $\pm I$ (since the triangular elements of $\Gamma_0(N)$ are  in 
$\pm\Gamma_1(N)$.) 
By definition,  
$\pi^*(K(\Gamma_1(N)))\subset \{1\}\subset A_E^*(N)$. 
By Lemma~\ref{gamma0}, if $\gamma\in \Gamma_0(N)$ is unital, then 
$\pi^*(\gamma)\in
A_E(N)^*$.  
So $\pi^*(K(\Gamma_0(N))\subset A_E(N)^*$.
Conversely,
suppose $\gamma\in \Gamma_0(N)$ and $\pi^*(\gamma)\in A_E^*(N)$.  By Lemma~\ref{gamma0}, there exists a unital 
$\delta\in \Gamma_0(N)$
such that $\gamma\delta\inv$ has its upper left corner congruent to 1 modulo $N$.  So $\gamma\delta\inv\in\Gamma_1(N)$ and hence in 
$K(\Gamma_1(N))$ by Theorem~\ref{maintheorem}.
So both $\delta$ and $\gamma\delta\inv$ are in $K(\Gamma_0(N))$,
and hence so is $\gamma$.
\end{proof}

\section{Conjectures}\label{conj}

Our computational results give us further confidence to make the following conjecture, independent of any assumption of GRH.  Because in our computations we used $R=\Z$, we will assume that $R=\Z$.  

\begin{cnj}\label{Q}
For any level $N$ and real quadratic field $E$:
  \begin{enumerate}
  
\item   If $\Gamma = \Gamma_1(N)^\pm$ or $\Gamma_1(N)$, then
$\psi_{\Gamma,E}$ is surjective onto 
$H_0^{\cusp}(\Gamma, \St(\Q^2;\Z))$. 

 \item  The cokernel of $\psi_{\Gamma_0(N)^\pm,E}$ is isomorphic to a quotient of 
$((\Z/N\Z)^\times /\set{\pm1})/A_E(N)$. 

\item The cokernel of $\psi_{\Gamma_0(N),E}$ is isomorphic to a quotient of 
$(\Z/N\Z)^\times/A_E(N)^*$.
  \end{enumerate} 
\end{cnj}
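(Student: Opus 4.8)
The plan is to isolate the single place where the Generalized Riemann Hypothesis is used and to remove it there; every other ingredient in the proofs of the three assertions is already unconditional. Concretely, the reduction of $\image(\psi_{\Gamma,E})$ to the group $K(\Gamma,E)$ (Lemma~\ref{Kgen}), the identification in Theorem~\ref{Psi*} of the cokernel of $\psi_{\Gamma,E}$ on $H_0^{\cusp}(\Gamma,\St(\Q^2;R))$ as a quotient of $Q^{ab}\otimes_\Z R$ with $Q=\Gamma/\widehat{K(\Gamma,E)}_\Gamma$ (the normal closure of $K(\Gamma,E)$ in $\Gamma$, in the notation of Theorem~\ref{Psi*}), and the computation of $\pi(K(\Gamma_0(N)^\pm,E))$ and $\pi^*(K(\Gamma_0(N),E))$ in terms of $A_E(N)$ and $A_E(N)^*$ via Lemma~\ref{gamma0} and Definition~\ref{AE}, are all GRH-free. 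GRH enters only through Theorem~\ref{lenstratheorem}, which is used only inside Lemma~\ref{mainlemma}, hence inside Theorem~\ref{maintheorem}, and (through that theorem) inside Theorem~\ref{00}. There it serves to produce, for a prescribed arithmetic progression $c+km$, an odd rational prime $p\nmid N$ in that progression which splits in $E$ as $p\O_E=\fp\fp'$ and for which the reduction of $U_{4N}^\pm$ modulo $\fp$ is all of $(\O_E/\fp)^\times\cong(\Z/p\Z)^\times$. So the first step is to record that all three parts of Conjecture~\ref{Q} follow at once from an unconditional version of Theorem~\ref{lenstratheorem}.

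The second step is to pare the needed input down to a minimum. Inspecting Lemma~\ref{mainlemma} and the proof of Theorem~\ref{00}, one sees that for each pair $(m,c)$ occurring there it suffices to produce a \emph{single} such prime $p$, and that one has room to choose the auxiliary prime $m$: any sufficiently large rational prime $m$ coprime to $6Nc$ and to the discriminant of $E/\Q$, with $U_{4N}^\pm\not\subset E^m$ and $c\not\equiv1\pmod m$, is admissible. Since $U_{4N}^\pm$ has finite index in $\O_E^\times$, it is generated by $-1$ and a fixed power of the fundamental unit $\epsilon$, so the concrete target becomes: \emph{for every real quadratic field $E$, every $N$, and every integer $c$ prime to a suitable modulus, there are infinitely many rational primes $p$, split in $E$ and lying in $c+km$ for some admissible $m$, for which this fixed power of $\epsilon$ is a primitive root modulo $p$ up to sign.} I would attack this by running Lenstra's inclusion--exclusion over the auxiliary fields $L_\ell=E(\zeta_\ell,(U_{4N}^\pm)^{1/\ell})$, but settling for a lower bound of the shape $\gg x/(\log x)^2$ for the number of such $p\le x$ rather than an exact density; the main term, which counts split primes in the progression via Chebotarev for $E(\zeta_m)$, is unconditional, and the task is to dominate the correction terms from the $L_\ell$.

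This is the main obstacle, and it is the classical one behind Artin's primitive root conjecture: the Chebotarev error terms for the $L_\ell$ must be summed over $\ell$ up to about $\sqrt x$, while unconditionally one controls them only for $\ell\ll(\log x)^A$, which is far too short to beat the main term --- and GRH is exactly what closes this gap in Lenstra's argument, as in Hooley's original treatment over $\Q$. I do not expect to remove this obstacle in general. The usual unconditional substitutes do not obviously apply here: the Gupta--Murty and Heath--Brown theorems extract a primitive root from a supply of \emph{several} multiplicatively independent elements, whereas $\langle-1,\epsilon\rangle$ has rank one and offers only one candidate; and Artin-on-average results (Goldfeld, Stephens) or Bombieri--Vinogradov-type inputs handle \emph{almost all} moduli, whereas here the residue $c$ (and, up to limited freedom, the modulus) is prescribed in advance. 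A genuinely different route would be a construction of unital matrices with prescribed reduction modulo $N$ that avoids split primes with a primitive-root property altogether --- this would bypass the analytic difficulty --- but we know of no such construction, and in its absence Conjecture~\ref{Q} appears to be of essentially the same depth as a case of Artin's conjecture, with the computations of Section~\ref{results} as its present justification.
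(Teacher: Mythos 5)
The statement you were asked about is Conjecture~\ref{Q}, which the paper itself does not prove unconditionally: it is established only under GRH in Theorem~\ref{grh}, and is otherwise supported by the computations reported in Section~\ref{results}. Your analysis --- that GRH enters solely through Theorem~\ref{lenstratheorem}, hence through Lemma~\ref{mainlemma}, Theorem~\ref{maintheorem} and Theorem~\ref{00}; that the remaining machinery (Lemma~\ref{Kgen}, Theorem~\ref{Psi*}, Lemma~\ref{gamma0}) is unconditional; and that the irreducible obstacle is a rank-one Artin-primitive-root statement for $\langle -1,\epsilon\rangle$ in a prescribed arithmetic progression, which lies outside the reach of Gupta--Murty/Heath-Brown or of averaging results --- is accurate and coincides with the paper's own position, so you have correctly diagnosed why the conjecture remains open rather than supplied a proof of it.
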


Our computations strongly support this conjecture.  
For $N\le1000$ and $E=\Q(\sqrt\Delta)$ for $\Delta\le50$ we checked the last two statements.
We did not compute with the $\Gamma_1$ groups because of their larger index in 
$\GL_2(\Z)$.   But the fact that the first statement enters crucially into the proofs of the latter two statements when we assume GRH in Theorem~\ref{grh} below, 
suggests that it too is true.

The question arises as to whether the surjections mentioned in (ii) and (iii) might be isomorphisms.  This is not the case.  A glance at the data in the tables in Section~\ref{results} shows that 
the cardinality of the cokernel of $\psi_{\Gamma_0(N)^\pm}$ may  be less than
$\phi(N)/(2|A_E(N)|)$ and the cardinality of the 
cokernel of $\psi_{\Gamma_0(N)}$ may be less than
$\phi(N)/|A_E(N)^*|$.

For $N$ and $E$ where do not get equality, we say ``shrinkage has occurred.''  In our data the shrinkage is always by a factor of $2$ or $4$.  We do not have an explanation for shrinkage, but it may be connected with the order of the subgroup of diagonal matrices in 
$\Gamma_0(N)^\pm$.

\begin{theorem}\label{grh}
Assume GRH.  Then the assertions in Conjecture~\ref{Q} are true.
\end{theorem}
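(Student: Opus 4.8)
The plan is to deduce Theorem~\ref{grh} by specializing the earlier results to $R=\Z$; no genuinely new argument is required. Assertion (i) of Conjecture~\ref{Q} is exactly the content of Theorem~\ref{maintheorem}(b) with $R=\Z$ (for $\Gamma=\Gamma_1(N)^\pm$ and for $\Gamma=\Gamma_1(N)$), so it needs no further work. The remaining task is to combine the computation of $K(\Gamma)$ in Theorem~\ref{00} with the abstract cokernel bound of Theorem~\ref{Psi*} and the identification of the image of $\psi_\Gamma$ from Lemma~\ref{Kgen}.

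For assertion (ii), I would take $L=\Gamma_0(N)^\pm$ and $K=K(\Gamma_0(N)^\pm,E)$. By Theorem~\ref{00}, $K=\pi\inv(A_E(N))$, so $K\supseteq\ker\pi=\Gamma_1(N)^\pm$ and $L/K\cong((\Z/N\Z)^\times/\set{\pm1})/A_E(N)$; since $L/\ker\pi$ is abelian, $K$ is normal in $L$, its normal closure $\widehat K_L$ equals $K$ itself, and $Q=L/\widehat K_L$ is this finite abelian group, so $Q^{ab}\otimes_\Z\Z=Q$. By Lemma~\ref{Kgen}, the $\Z$-span $\Z[\image(\Psi_{K,L})]$ of the symbols $[e,ke]_L$ for $k\in K$ is precisely $\image(\psi_{\Gamma_0(N)^\pm,E})$, so the module $X$ appearing in Theorem~\ref{Psi*} is the cokernel of $\psi_{\Gamma_0(N)^\pm,E}$ viewed as a map into $H_0^{\cusp}(\Gamma_0(N)^\pm,\St(\Q^2;\Z))$. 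Theorem~\ref{Psi*} then yields that this cokernel is isomorphic to a quotient of $Q^{ab}\otimes_\Z\Z=((\Z/N\Z)^\times/\set{\pm1})/A_E(N)$, which is assertion (ii).

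For assertion (iii), I would run the same argument with $L=\Gamma_0(N)$ and $K=K(\Gamma_0(N),E)=(\pi^*)\inv(A_E(N)^*)$, using the second half of Theorem~\ref{00}; here $L/K\cong(\Z/N\Z)^\times/A_E(N)^*$ is again finite abelian, $Q=L/\widehat K_L$ equals it, and Theorem~\ref{Psi*} together with Lemma~\ref{Kgen} gives that the cokernel of $\psi_{\Gamma_0(N),E}$ is a quotient of $(\Z/N\Z)^\times/A_E(N)^*$. The substance of the theorem is entirely contained in the inputs: the number-theoretic work sits in Theorem~\ref{00} (hence in Lenstra's theorem and Dirichlet's theorem, which is where GRH enters), and the homological work sits in Theorem~\ref{Psi*}. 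The only points to verify by hand are the normality of $K=K(\Gamma)$ in $\Gamma$ and the identification of $\Z[\image(\Psi_{K,\Gamma})]$ with $\image(\psi_\Gamma)$ via Lemma~\ref{Kgen}; neither is a real obstacle, so I would not expect any difficulty beyond assembling the pieces.
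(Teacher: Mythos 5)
Your proposal is correct and follows essentially the same route as the paper: assertion (i) is quoted from Theorem~\ref{maintheorem}, and (ii), (iii) are obtained by feeding $K=K(\Gamma_0(N)^{(\pm)},E)$ and $L=\Gamma_0(N)^{(\pm)}$ into Theorem~\ref{Psi*}, identifying $X$ with the cokernel of $\psi$ via Lemma~\ref{Kgen}, and computing $Q$ from Theorem~\ref{00}. The only (harmless) difference is that you use the full equality $K=\pi\inv(A_E(N))$ and note normality of $K$ explicitly, whereas the paper only uses the containment $K\supseteq\langle\Gamma_1(N)^\pm,\text{unital lifts of }A_E(N)\rangle$ to bound $Q$ as a quotient.
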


\begin{proof}
Assertion (i) is Theorem~\ref{maintheorem}. 
For assertion (ii)  start with
Theorem~\ref{maintheorem} (i).  
Then $K(\Gamma_0(N)^\pm)\supset K(\Gamma_1(N)^\pm)=\Gamma_1(N)^\pm$.
In the notation of Theorem~\ref{Psi*}, let $R=\Z$, 
$K=K(\Gamma_0(N)^\pm)$ and $L=\Gamma_0(N)^\pm$.
Then $Q$ is a quotient of the abelian group
$\Gamma_0(N)^\pm/K(\Gamma_0(N)^\pm)$.
Now $K(\Gamma_0(N)^\pm)$ contains $\Gamma_1(N)^\pm$ and also
by Theorem~\ref{00}, 
for each element $\alpha\in A_E(N)$, it contains a unital element whose upper left corner is congruent to 
$\pm\alpha$ modulo $N$.  Therefore $Q$ is a quotient of $((\Z/N\Z)^\times /\set{\pm1})/A_E(N)$ and we are finished by Theorem~\ref{Psi*}.
The proof of assertion (iii) is similar.

\end{proof}

\section{Complex cuspidal Steinberg homology and toral periods of modular forms}\label{complex}
In this section we investigate the relationship between 
$H^{\cusp}_0(\Gamma, \St(\Q^2;\C))$  and  toral periods of 
holomorphic cusp forms of weight $2$.  Based on Theorem~\ref{grh}, we prove (assuming GRH) that the toral homology classes for a fixed real quadratic field $E$ span the homology of the compact modular curve, when 
$\Gamma=\Gamma_1(N)^\pm$, $\Gamma_1(N)$, $\Gamma_0(N)^\pm$, or 
$\Gamma_0(N)$.

Fix the coefficient ring $R$ to be the complex numbers $\C$.   
Let $\Gamma$ be a congruence subgroup of $\GL_2(\Z)$.  We assume that 
$\Gamma$ either is in $\SL_2(\Z)$ or else contains $J=\diag(1,-1)$.

Let $Y$ denote the upper half plane and let $Y(\Gamma)=\bar Y/\Gamma$, 
where $\bar Y$ is the Borel-Serre compactification of $Y$.  The quotient $\bar Y(\Gamma)$ is a compact orbifold with boundary, where the stabilizers of singular points are finite groups.  It is orientable if $\Gamma\subset\SL_2(\Z)$.  The boundary of $\bar Y$ consists of a line at each cusp $v$.  Fix a base point $z_v$ on the line $L_v$ at the cusp $v$ and let 
$\hat z_v$ denote its image in $Y(\Gamma)$.

\begin{definition}\label{toral0}
Define the map $b \colon H_0(\Gamma,\St(\Q^2;\C))=\St(\Q^2;\C)_\Gamma \to
H_1(Y(\Gamma), \partial Y(\Gamma),\C)$ by sending $[v,w]_\Gamma$ to the fundamental class of a path from $\hat z_v$ to $\hat z_w$ modulo $\Gamma$ .  
\end{definition}
It does not matter which path is chosen, since $\bar Y$ is contractible.  It is easy to see from Theorem~\ref{thm:mod-props} that the map $b$ is well-defined.

\begin{lemma}\label{b}
The map $b$ is an isomorphism.
\end{lemma}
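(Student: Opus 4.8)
The plan is to identify $H_0(\Gamma,\St(\Q^2;\C))$ with a group of modular symbols and identify $H_1(Y(\Gamma),\partial Y(\Gamma);\C)$ with relative singular homology of the Borel–Serre quotient, and to show that $b$ matches up explicit generators and relations on both sides. The key observation is that $\bar Y$ is contractible and its boundary consists of one line $L_v$ per cusp $v\in\PP^1(\Q)$; so $\bar Y(\Gamma)$ is an Eilenberg–MacLane-type space (up to the finite stabilizers, which are invisible with $\C$ coefficients) whose relative homology $H_1(\bar Y(\Gamma),\partial\bar Y(\Gamma);\C)$ has a combinatorial description in terms of paths between the $\hat z_v$'s.

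\medskip

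First I would make the target precise: by homotopy invariance $H_1(Y(\Gamma),\partial Y(\Gamma);\C)\cong H_1(\bar Y(\Gamma),\partial\bar Y(\Gamma);\C)$, and since $\bar Y$ is contractible with a free-ish $\Gamma$-action (finite stabilizers, which contribute nothing over $\C$), the relative homology is computed by the $\Gamma$-coinvariants of the relative chain complex of $(\bar Y,\partial\bar Y)$. A path from $\hat z_v$ to $\hat z_w$ is a relative $1$-cycle, and two such paths differ by a relative $1$-boundary; this shows $b$ is well-defined (as the excerpt notes) and that its image consists exactly of classes of such paths.

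\medskip

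\textbf{Surjectivity.} Every class in $H_1(\bar Y(\Gamma),\partial\bar Y(\Gamma);\C)$ is represented by a relative $1$-cycle, i.e.\ a $\C$-linear combination of arcs each of whose endpoints lies on the boundary, hence (after a homotopy pushing endpoints along the contractible boundary lines $L_v$ to the base points $\hat z_v$) by a combination of paths between the $\hat z_v$'s. Such a combination is in the image of $b$. More efficiently, one can cut $\bar Y(\Gamma)$ along a finite set of geodesics into a polygon-like fundamental domain; the edges joining cusps give arcs realizing a spanning set, and each is a $b([v,w]_\Gamma)$.

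\medskip

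\textbf{Injectivity.} This is the substantive step and the main obstacle. Use Theorem~\ref{thm:mod-props}: $\St(\Q^2;\C)$ is spanned by $[v,w]$ subject to the relations $[v,w]=-[w,v]$ and $[v,w]=[v,x]+[x,w]$, so $H_0(\Gamma,\St(\Q^2;\C))$ is presented by those relations together with $[v,w]_\Gamma=[\gamma v,\gamma w]_\Gamma$ (Corollary~\ref{triv}). The relative chain complex of $(\bar Y,\partial\bar Y)$ in low degrees, taken $\Gamma$-coinvariantly, realizes exactly the same presentation: the relative $1$-chains modulo relative $2$-chains and modulo the boundary-line chains give generators ``arc from cusp to cusp'' with the additivity relation from the $2$-simplices and the $\Gamma$-relation from coinvariance; and there are no further relations because $H_0(\partial\bar Y(\Gamma);\C)\to H_0(\bar Y(\Gamma);\C)$ is surjective (so the connecting map $H_1(\bar Y(\Gamma),\partial;\C)\to H_0(\partial;\C)$ has image exactly the reduced part) and $H_1(\bar Y(\Gamma);\C)$ contributes the cycles already accounted for. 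Concretely, I would write down the long exact sequence of the pair
\[
H_1(\partial\bar Y(\Gamma);\C)\to H_1(\bar Y(\Gamma);\C)\to H_1(\bar Y(\Gamma),\partial\bar Y(\Gamma);\C)\to \widetilde H_0(\partial\bar Y(\Gamma);\C)\to 0
\]
and compare it term-by-term with the analogous filtration of $\St(\Q^2;\C)_\Gamma$ coming from the boundary map $\partial$ to divisors on cusps used in Definition~\ref{defcusp}: the cuspidal part $H_0^{\cusp}$ matches $H_1(\bar Y(\Gamma);\C)/(\text{image of }H_1(\partial))$, and the non-cuspidal quotient matches $\widetilde H_0(\partial\bar Y(\Gamma);\C)$, with $b$ inducing the identity on each graded piece. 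A diagram chase then forces $b$ to be an isomorphism. The delicate point to get right is the contribution of the cusps: one must check that $H_1$ of each boundary line is killed (each $L_v/\Gamma_v$ is a circle, but its class maps to zero in relative homology) and that no spurious relations among the $[v,w]_\Gamma$ are introduced, which is exactly where the contractibility of $\bar Y$ and the explicit description of $T(\Q^2)$ as the boundary of the Tits building (a $0$-sphere's worth of data for $n=2$) are used.
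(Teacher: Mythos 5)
Your surjectivity argument is essentially the paper's: represent a relative class by a path, close it up or push its endpoints along the contractible boundary lines to the chosen base points, and recognize it as $b([v,w]_\Gamma)$. The divergence is in injectivity, and that is where your write-up has a genuine gap. The paper does not prove injectivity directly at all: it notes that Lefschetz duality gives $H_1(Y(\Gamma),\partial Y(\Gamma),\C)\cong H^1(Y(\Gamma),\C)\cong H^1(\Gamma,\C)$ (finite stabilizers, $\C$ coefficients), and Borel--Serre duality (Theorem~\ref{BS}) gives $\dim H_0(\Gamma,\St(\Q^2;\C))=\dim H^1(\Gamma,\C)$; since source and target are finite-dimensional of the same dimension, surjectivity already forces bijectivity. (The case $J=\diag(1,-1)\in\Gamma$ is then handled by passing to $J$-coinvariants of the $\SL_2$ statement.)

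Your injectivity step instead rests on two assertions that are not proved and are essentially restatements of what is to be shown: that $H_0^{\cusp}(\Gamma,\St(\Q^2;\C))$ ``matches'' $H_1(\bar Y(\Gamma);\C)$ modulo the boundary contribution, and that $b$ ``induces the identity on each graded piece.'' In the paper these identifications are established only \emph{after} Lemma~\ref{b}, with $\lambda\circ b$ as the left vertical arrow of the comparison diagram, so invoking them here is circular; and the claim that the coinvariant chain complex realizes ``exactly the same presentation'' with ``no further relations'' is precisely the content of injectivity, not an argument for it. The salvageable core of your approach is the observation that the relative chain complex of $(\bar Y,\partial\bar Y)$ has homology concentrated in degree $1$, where it equals $\widetilde H_0(\partial\bar Y;\C)=\widetilde H_0(\PP^1(\Q);\C)=\St(\Q^2;\C)$ as a $\Gamma$-module (both $\bar Y$ and each boundary line are contractible). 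The equivariant-homology spectral sequence then degenerates, and since stabilizers are finite and the coefficients are $\C$, it yields $H_1(Y(\Gamma),\partial Y(\Gamma),\C)\cong H_0(\Gamma,\St(\Q^2;\C))$ with no appeal to duality and with the $\GL_2$ case handled uniformly; one then checks this isomorphism is $b$. That is the argument to write down if you want a proof along your lines; the long-exact-sequence diagram chase as stated does not close.
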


\begin{proof} 
First suppose that 
$\Gamma\subset\SL_2(\Z)$.
By Lefschetz duality, $H_1(Y(\Gamma), \partial Y(\Gamma),\C)$ is isomorphic to $H^1(Y(\Gamma), \C)$.
Because the stabilizers of the action of $\Gamma$ on $Y(\Gamma)$ are finite groups, 
$H^1(\Gamma, \C)$ is isomorphic to $H^1(Y(\Gamma),\C)$.
By Theorem~\ref{BS},  $H_0(\Gamma, \St(\Q^2;R))$ and  $H^1(\Gamma, \C)$ are vector spaces of the same (finite) dimension.  So the source and target of $b$ are finite dimensional vector spaces of the same dimension, and it suffices to show that $b$ is surjective.

$H_1(Y(\Gamma), \partial Y(\Gamma),\C)$ is generated by the fundamental classes of paths in $Y(\Gamma)$ which either start at one cusp and end at another cusp, or are closed.   It suffices to show that these are in the image of $b$.
Choose a path $\pi$ in $Y(\Gamma)$ whose fundamental class is
$\xi\in H_1(Y(\Gamma), \partial Y(\Gamma),\C)$.  
If $\pi$ is a closed path (which may not meet
$ \partial Y(\Gamma)$), choose a point $p$ on $\pi$ and a cuspidal point 
$\hat z_x$, and add to $\pi$ a path from $\hat z_x$ to $p$ at the start and the same path backwards from $p$ to $\hat z_x$ at the end.  If $\pi$ is not a closed path, it must begin at a point which is the image modulo $\Gamma$ of some point on the line $L_v$ at some cusp $v$ and end at another such point at the same or another cusp. Without changing the homology class we may assume that these points are among our chosen base points.
So without loss of generality, we may assume that the initial and final points of $\pi$ are $\hat z_v$ and $\hat z_w$ for some cusps $v$ and $w$.
Lift $\pi$ back to a path on $\overline Y$ from $z_v$ to $z_w$.  We see that
$b([v,w]_\Gamma)=\xi$.  So $b$ is surjective and therefore bijective.

Now suppose 
$J=\diag(1,-1)\in\Gamma$, let $\widetilde\Gamma=\Gamma\cap\SL_2(\Z)$.  The map $b$ for $\widetilde\Gamma$ is an isomorphism and it is $J$-equivariant.  Here $J$ acts on 
$[v,w]_\Gamma$ by sending it to $[Jv,Jw]_\Gamma$ and on $Y$ by sending 
$z=x+iy$ to $-\overline z=-x+iy$.
We finish by noting that  $H_0(\Gamma,\St(\Q^2;\C))=H_0(\widetilde\Gamma,\St(\Q^2;\C))_{\Gamma/\widetilde\Gamma}$ and 
$H_1(Y(\Gamma), \partial Y(\Gamma),\C)=
H_1(Y(\widetilde\Gamma), \partial Y(\widetilde\Gamma),\C)_{\Gamma/\widetilde\Gamma}$.
\end{proof}

For $\Gamma\subset \SL_2(\Z)$, let $S_2(\Gamma)$ (resp. $\overline{S_2(\Gamma)}$) be the space of holomorphic 
(resp. antiholomorphic) modular cuspforms of weight 2 for $\Gamma$ with trivial character.

The cohomology with compact support $H^1_c(Y(\Gamma),\C)$ is canonically isomorphic to the relative cohomology 
$H^1(Y(\Gamma), \partial Y(\Gamma),\C)$.  Its image in $H^1(Y(\Gamma),\C)$
is denoted $H^1_!(Y(\Gamma),\C)$ and called ``interior cohomology''.  It is well-known and not hard to see that $H^1_!(Y(\Gamma),\C)$ is naturally isomorphic to what is called ``parabolic cohomology'' in \cite{S}.

We have a map
\[S_2(\Gamma)\oplus \overline{S_2(\Gamma)}\to H^1_c(Y(\Gamma),\C)= H^1(Y(\Gamma),\partial Y(\Gamma), \C),\]
given by sending $f\oplus \overline g$ to the class that pairs with a cycle $\xi$ in $H_1(Y(\Gamma),\partial Y(\Gamma), \C)$ to give the value
\[\int_\xi f(z)dz+\overline g(z)d\overline z.\]  Composing this map with the defining map from $H^1_c(Y(\Gamma),\C)$ to $H^1_!(Y(\Gamma),\C)$ gives a map
\[a\colon S_2(\Gamma)\oplus \overline{S_2(\Gamma)}\to
H^1_!(Y(\Gamma),\C).\]

\begin{lemma}\label{ES}
Let $\Gamma\subset \SL_2(\Z)$ be a subgroup of finite index.
Then the map $a$ is an isomorphism.
\end{lemma}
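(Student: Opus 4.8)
The plan is to recognize Lemma~\ref{ES} as a version of the Eichler--Shimura isomorphism, phrased for interior cohomology, and to reduce it to the classical statement. First I would recall that for $\Gamma \subset \SL_2(\Z)$ of finite index, the Eichler--Shimura theorem gives a canonical isomorphism $S_2(\Gamma) \oplus \overline{S_2(\Gamma)} \to H^1_{\mathrm{par}}(\Gamma, \C)$, where parabolic cohomology is by definition the subspace of $H^1(\Gamma,\C)$ of classes whose restriction to each cusp stabilizer vanishes (equivalently, the classes represented by cocycles that are trivial on all parabolic subgroups). The map is exactly the one described: send $f \oplus \overline{g}$ to the cohomology class whose period over a $1$-cycle $\xi$ is $\int_\xi f(z)\,dz + \overline{g(z)}\,d\overline{z}$. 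So the content of the lemma is (a) identifying $H^1_!(Y(\Gamma),\C)$ with $H^1_{\mathrm{par}}(\Gamma,\C)$, and (b) checking that the map $a$ constructed here agrees with the classical Eichler--Shimura map under that identification.

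For step (a), I would argue as in the discussion preceding the lemma: since the stabilizers of the $\Gamma$-action on $Y$ are finite, $H^*(\Gamma,\C) \cong H^*(Y(\Gamma),\C)$, and under this the interior cohomology $H^1_!(Y(\Gamma),\C) = \image(H^1_c(Y(\Gamma),\C) \to H^1(Y(\Gamma),\C))$ corresponds to parabolic cohomology in the group-cohomology sense. This is the statement the authors have already flagged as ``well-known and not hard to see'' via the comparison with \cite{S}; I would cite that and, if needed, sketch the identification using the long exact sequence of the pair $(Y(\Gamma),\partial Y(\Gamma))$, noting that each boundary line $L_v$ contributes an $H^1$ that is exactly the invariants of the cuspidal $\Gamma_v$, so the kernel of restriction to the boundary is precisely the parabolic classes.

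For step (b), the key point is that the map $H^1_c(Y(\Gamma),\C) \to H^1(Y(\Gamma),\C)$ is ``forget compact support,'' and that the de Rham representative of the class attached to $f \oplus \overline{g}$ is the (bounded, but not compactly supported) harmonic form $f(z)\,dz + \overline{g(z)}\,d\overline{z}$; its periods over closed cycles in $Y(\Gamma)$ are the usual modular-symbol periods, so the image of $a$ in $H^1(Y(\Gamma),\C)$ is the classical Eichler--Shimura image, which is exactly $H^1_{\mathrm{par}}(\Gamma,\C)$. Injectivity of $a$ follows because a cusp form with all periods zero is zero (the periods determine $f$ up to the holomorphic/antiholomorphic splitting, and a holomorphic cusp form orthogonal to itself under the Petersson pairing vanishes); surjectivity onto $H^1_!$ is the surjectivity half of Eichler--Shimura. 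I expect the main obstacle to be the careful bookkeeping in step (a)---making the identification $H^1_!(Y(\Gamma),\C) \cong H^1_{\mathrm{par}}(\Gamma,\C)$ precise, including the orbifold/finite-stabilizer subtlety and the fact that $\C$-coefficients kill the torsion---rather than the analytic input, which is entirely classical. An alternative, cleaner route is to simply invoke the Eichler--Shimura isomorphism in the form stated in \cite{S} for $H^1_{\mathrm{par}}$ and note that the period pairing in Definition~\ref{toral0} and Lemma~\ref{b} matches the pairing used there; I would likely present the proof in that style, keeping the topological identification to a short paragraph with a reference.
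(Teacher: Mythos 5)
Your proposal is correct and takes essentially the same route as the paper: the paper's entire proof is a citation of the classical Eichler--Shimura theorem (in Deligne's formulation, Th\'eor\`eme~2.10 of \cite{deligne}), and your argument is just a standard unpacking of that citation, including the identification of $H^1_!(Y(\Gamma),\C)$ with parabolic cohomology that the paper itself flags as well known in the paragraph preceding the lemma. No gaps; you simply supply more detail than the authors chose to.
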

\begin{proof}
This is a special case of a theorem of Eichler and Shimura.  It follows, for example,  
immediately from Deligne's statement~\cite[Th\'eor\`eme~2.10]{deligne}.
\end{proof}

Let $\lambda \colon H_1(Y(\Gamma), \partial Y(\Gamma),\C)\to H^1(Y(\Gamma),\C)$ be the isomorphism given by Lefschetz duality.

\begin{lemma}
 Let $\Gamma\subset \GL_2(\Z)$  be a subgroup of finite index.
 \begin{enumerate}
 \item \label{it:cusp-a} If $\Gamma\subset \SL_2(\Z)$ then 
  $a\inv\circ\lambda\circ b$ induces an isomorphism 
 \[\phi \colon H_0^{\cusp}(\Gamma, \St(\Q^2;\C))\to 
  S_2(\Gamma)\oplus \overline{S_2(\Gamma)}.\]
     
\item \label{it:cusp-b} If $J=\diag(1,-1)\in\Gamma$, let $\widetilde\Gamma=\Gamma\cap\SL_2(\Z)$.
Then  
$H_0^{\cusp}(\Gamma, \St(\Q^2;\C))$ has the same dimension as $S_2(\widetilde\Gamma)$.
 \end{enumerate}
\end{lemma}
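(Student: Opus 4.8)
The plan is to prove part \ref{it:cusp-a} by reading off $\phi=a\inv\circ\lambda\circ b$ directly from the long exact homology sequence of the pair $(Y(\Gamma),\partial Y(\Gamma))$, and then to obtain part \ref{it:cusp-b} by descending from $\widetilde\Gamma$ to $\Gamma$ along the action of $J$.

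For part \ref{it:cusp-a}, I would first note that $\partial Y(\Gamma)$ is a disjoint union of circles indexed by the $\Gamma$-orbits of cusps, which gives a canonical identification $H_0(\partial Y(\Gamma),\C)\cong\DD_\Gamma$. Directly from Definition~\ref{toral0} and the definition of $\partial\colon\St(\Q^2;\C)\to\DD$, the map $b$ intertwines $\partial\colon H_0(\Gamma,\St(\Q^2;\C))\to\DD_\Gamma$ with the connecting homomorphism $\partial_*\colon H_1(Y(\Gamma),\partial Y(\Gamma),\C)\to H_0(\partial Y(\Gamma),\C)$ of the pair, since a path from $\hat z_u$ to $\hat z_v$ has boundary $(v)_\Gamma-(u)_\Gamma$. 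Because $b$ is an isomorphism by Lemma~\ref{b}, it restricts to an isomorphism from $H_0^{\cusp}(\Gamma,\St(\Q^2;\C))=\ker\partial$ onto $\ker\partial_*$, which by exactness of the homology sequence is the image of the natural map $H_1(Y(\Gamma),\C)\to H_1(Y(\Gamma),\partial Y(\Gamma),\C)$. Lefschetz duality is natural for the inclusion of the pair, so $\lambda$ carries that image onto the image of the natural map $H^1_c(Y(\Gamma),\C)\to H^1(Y(\Gamma),\C)$, which is $H^1_!(Y(\Gamma),\C)$ by definition. Hence $\lambda\circ b$ restricts to an isomorphism $H_0^{\cusp}(\Gamma,\St(\Q^2;\C))\xrightarrow{\sim}H^1_!(Y(\Gamma),\C)$, and composing with the inverse of the Eichler--Shimura isomorphism $a$ of Lemma~\ref{ES} produces $\phi$.

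For part \ref{it:cusp-b}, I would argue by descent along $\Gamma/\widetilde\Gamma$. As in the proof of Lemma~\ref{b}, $\widetilde\Gamma=\Gamma\cap\SL_2(\Z)$ is normal of index $2$ in $\Gamma$ with quotient generated by the image of $J=\diag(1,-1)$, and over $\C$ the coinvariants functor for this group of order $2$ is exact and coincides with the invariants functor. Since $\partial$ is $\Gamma$-equivariant, applying coinvariants to $H_0^{\cusp}(\widetilde\Gamma,\St(\Q^2;\C))=\ker\bigl(\St(\Q^2;\C)_{\widetilde\Gamma}\to\DD_{\widetilde\Gamma}\bigr)$ gives
\[
H_0^{\cusp}(\Gamma,\St(\Q^2;\C))=H_0^{\cusp}(\widetilde\Gamma,\St(\Q^2;\C))^{J=1}.
\]
Next I would transport the $J$-action through the isomorphism $\phi$ of part \ref{it:cusp-a} applied to $\widetilde\Gamma$: the maps $b$ and $a$ are $J$-equivariant (for $b$ this is recorded in the proof of Lemma~\ref{b}; for $a$ it is compatibility of pullback by $z\mapsto-\bar z$ with the period pairing), and $\lambda$ intertwines the two $J$-actions up to a sign, because $z\mapsto-\bar z$ reverses the orientation of $Y$. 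A sign does not change the dimension of an eigenspace, so $\phi$ identifies $H_0^{\cusp}(\widetilde\Gamma,\St(\Q^2;\C))^{J=1}$ with one of the two $J$-eigenspaces of $S_2(\widetilde\Gamma)\oplus\overline{S_2(\widetilde\Gamma)}$. Finally, since $z\mapsto-\bar z$ is anti-holomorphic and normalizes $\widetilde\Gamma$, pulling back by $J$ interchanges the two summands $S_2(\widetilde\Gamma)$ and $\overline{S_2(\widetilde\Gamma)}$; an involution that swaps two subspaces has both eigenspaces of dimension equal to that of either summand, namely $\dim S_2(\widetilde\Gamma)$. This yields the asserted equality of dimensions.

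The routine parts --- commutativity of the boundary square, the exactness bookkeeping, and naturality of the duality maps --- are standard. The one point that I expect to require genuine care is the behaviour of $J$ under the duality isomorphisms in part \ref{it:cusp-b}: one must check that $b$ and $a$ are honestly equivariant rather than equivariant up to a scalar, and that orientation reversal introduces only a harmless sign in $\lambda$. If one prefers to bypass this bookkeeping, an alternative is to observe that $\lambda\circ b$ identifies $H_0^{\cusp}(\widetilde\Gamma,\St(\Q^2;\C))$ with $H^1_!(Y(\widetilde\Gamma),\C)\cong H^1(X(\widetilde\Gamma),\C)$ compatibly with $J$ (up to sign), and that the anti-holomorphic involution $J$ of the compact Riemann surface $X(\widetilde\Gamma)$ exchanges the Hodge pieces $H^{1,0}$ and $H^{0,1}$, so that each $J$-eigenspace has dimension equal to the genus, which is $\dim S_2(\widetilde\Gamma)$.
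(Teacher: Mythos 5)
Your argument is correct and follows essentially the same route as the paper: part (i) is the identification of $H_0^{\cusp}$ with $H^1_!(Y(\Gamma),\C)$ via $\lambda\circ b$ and the exact sequence of the pair $(Y(\Gamma),\partial Y(\Gamma))$ (the paper phrases this with the cohomology sequence and vertical duality maps, you with the homology sequence and naturality of Lefschetz duality, which is the same diagram read the other way), followed by Eichler--Shimura. Part (ii) likewise matches the paper's proof --- coinvariants equal invariants over $\C$, $\phi$ is $J$-equivariant up to the orientation sign coming from the intersection pairing, and $J$ swaps $S_2(\widetilde\Gamma)$ with $\overline{S_2(\widetilde\Gamma)}$ so each eigenspace has dimension $\dim S_2(\widetilde\Gamma)$ --- with the paper simply carrying out explicitly the sign bookkeeping for $\omega_{J\xi}=-J\omega_\xi$ that you flag as the delicate point.
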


\begin{proof}
 First suppose that $\Gamma\subset \SL_2(\Z)$.  Then the rows of the following commutative diagram is exact:
\[
\xymatrix{
H^1_c(Y(\Gamma),\C)\ar[r]
&H^1(Y(\Gamma),\C)\ar[r]^\alpha&H^1(\partial Y(\Gamma),\C)
\ar[r]&H^2_c(Y(\Gamma),\C)
\\
&H_0(\Gamma, \St(\Q^2;\C)) \ar[u]\ar[r]^\beta
&H_0(\partial Y(\Gamma),\C) \ar[u]
\simeq\DD 
}
\]
Here, $\alpha$ is induced by restriction of a cocycle to the boundary, and $\beta$ is the map that sends 
$[v,w]_\Gamma$ to $(w)_\Gamma-(v)_\Gamma$.
The vertical maps are isomorphisms: the one on the left is $\lambda\circ b$ and the one on the right is Poincar\'e duality, 
where we identify $\DD$ with 
$H_0(\partial Y(\Gamma),\C)$ in the obvious way.  
The kernel of $\alpha$ is $H^1_!(Y(\Gamma),\C)$
and the kernel of $\beta$ is $H_0^{\cusp}(\Gamma, \St(\Q^2;\C))$.
Together with Lemma~\ref{ES}, this proves part~\ref{it:cusp-a}.

To see part~\ref{it:cusp-b}, we first show that $\phi$  is $J$-equivariant up to sign. 
We begin by making explicit the isomorphism
 $a\inv\circ \lambda\circ b|H_0^{\cusp}(\Gamma, \St(\Q^2;\C))$.  The resulting integral formula is useful in its own right.  
 
 Without loss of generality, by Lemma~\ref{hcusp}, consider a cuspidal modular symbol 
 $[v,\gamma v]_{\widetilde\Gamma}$, where
 $\gamma\in\widetilde\Gamma$ and let  $w=\gamma v$.
Choose $z_v$ and then choose $z_w=\gamma z_v$ and then
choose a path in the upper half plane from $z_v$ to $z_w$ and project to the closed path in $Y(\widetilde\Gamma)$ whose fundamental class is $\xi$ .  Then $[v,w]_{\widetilde\Gamma}$ maps to the de Rham class of the closed differential $\omega_\xi$, where  $\omega_\xi$ satisfies 
\[
\int_\pi \omega_\xi =  \pair{\xi, \pi}
\]
for any closed path $\pi$ on $Y(\widetilde\Gamma)$.  Here $\pair{\cdot, \cdot}$ denotes the intersection pairing.  Then $\omega_\xi$ is cohomologous to 
$f_\xi(z)dz+\bar g_\xi(z)d\bar z$ for some $(f_\xi,\bar g_\xi)\in  S_2(\Gamma)\oplus \overline{S_2(\Gamma)}$.  Then
\[
(a\inv\circ \lambda\circ b)([v,\gamma v]_{\widetilde\Gamma})=(f_\xi,\bar g_\xi).
\]

Doing the same process to $[Jv,Jw]_{\widetilde\Gamma}$, we take $J\xi$, so the image of $[Jv,Jw]_{\widetilde\Gamma}$ is the 
de Rham class of the closed differential $\omega_{J\xi}$, where
\[
\int_\pi \omega_{J\xi} =  \pair{J\xi, \pi}
\]
for any $\pi$.
Since $J$ reverses orientation, $\pair{J\xi, \pi}=-\pair{\xi, J\pi}$.  Because
\[
\int_\pi J\omega = \int_{J\pi} \omega
\]
it follows that 
$\omega_{J\xi}=-J\omega_{\xi}$.  We see that the isomorphism $\phi$ in 
part~\ref{it:cusp-a} is $J$-equivariant up to sign, i.e. $\phi(Jx)=-J\phi(x)$.

If $f\in S_2(\widetilde\Gamma)$,
 $J(f(z)dz)=-\bar f(z) d\bar z$ (as may be seen for example from the $q$-expansion of $f$ and the fact that $J(q)=\bar q$ when $q=e^{2\pi iz}$.)
 If $\omega_\xi$ is cohomologous to 
$f_\xi(z)dz+\bar g_\xi(z)d\bar z$, then $-J\omega_{\xi}$
is cohomologous to 
$\bar f_\xi(z)d\bar z+ g_\xi(z)dz$.  So $-J$ acts on 
$S_2(\Gamma)\oplus \overline{S_2(\Gamma)}$ as complex conjugation, flipping the two factors.
 
By the homology version of \cite[Proposition III.10.4]{B}, 
$H_0^{\cusp}(\Gamma, \St(\Q^2;\C))$ is isomorphic to the $J$-coinvariants of $H_0^{\cusp}(\widetilde\Gamma, \St(\Q^2;\C))$.  The latter vector space has the same dimension as the $J$-invariants of $H_0^{\cusp}(\widetilde\Gamma, \St(\Q^2;R))$.  (For any finite dimensional $\C[\gen{J}]$-module $V$, write $V$ as a direct sum of irreducible components.  The trivial module has both $J$-invariants and $J$-coinvariants of dimension 1, and 
the nontrivial module has both $J$-invariants and $J$-coinvariants of dimension 0.)

  Under the isomorphism $\phi$, 
  the  $J$-invariants of $H_0^{\cusp}(\widetilde\Gamma, \St(\Q^2;R))$
map to the $J$-anti-invariants of 
$S_2(\widetilde\Gamma)\oplus \overline{S_2(\widetilde\Gamma)}$, i.e. 
$\{y\in S_2(\widetilde\Gamma)\oplus \overline{S_2(\widetilde\Gamma)} \ | \ J(y)=-y\}$.  
Since $J$ interchanges the two summands  $S_2(\widetilde\Gamma)$ and 
$\overline{S_2(\widetilde\Gamma)}$, we conclude that the dimension of 
$H_0^{\cusp}(\Gamma, \St(\Q^2;\C))$ equals the dimension of 
$S_2(\widetilde\Gamma)$.
\end{proof}

\noindent Let $g_0(N)$ be the genus of the modular curve $X_0(N)$. Thus
$\dim_\C S_2(\Gamma_0(N))=g_0(N)$.

\begin{corollary}\label{dims}
  If $N\ge1$, then
  \begin{align*}
\dim_\C  H_0^{\cusp}(\Gamma_0(N), \St(\Q^2;\C)) &=2g_0(N) \quad \text{and}\\ 
\dim_\C H_0^{\cusp}(\Gamma_0(N)^\pm, \St(\Q^2;\C)) & =g_0(N).
  \end{align*}
\end{corollary}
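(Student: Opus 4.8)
The plan is to read off both equalities directly from the lemma immediately preceding this corollary, combined with the classical dimension formula $\dim_\C S_2(\Gamma_0(N)) = g_0(N)$ recorded just above. For the first equality I would apply part~\ref{it:cusp-a} of that lemma to $\Gamma = \Gamma_0(N)$, which is a finite-index subgroup of $\SL_2(\Z)$; it provides an isomorphism
\[
H_0^{\cusp}(\Gamma_0(N), \St(\Q^2;\C)) \xrightarrow{\ \sim\ } S_2(\Gamma_0(N)) \oplus \overline{S_2(\Gamma_0(N))}.
\]
Since complex conjugation identifies $\overline{S_2(\Gamma_0(N))}$ with a vector space of the same $\C$-dimension as $S_2(\Gamma_0(N))$, the left-hand side has dimension $2\dim_\C S_2(\Gamma_0(N)) = 2g_0(N)$.

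For the second equality I would apply part~\ref{it:cusp-b} of the same lemma with $\Gamma = \Gamma_0(N)^\pm$. The two hypotheses to verify are both immediate from Definition~\ref{N1}: the matrix $J = \diag(1,-1)$ lies in $\Gamma_0(N)^\pm$ because it has determinant $-1$ and is upper triangular modulo $N$, and $\Gamma_0(N)^\pm \cap \SL_2(\Z) = \Gamma_0(N)$ by definition, so $\widetilde\Gamma = \Gamma_0(N)$. The lemma then yields $\dim_\C H_0^{\cusp}(\Gamma_0(N)^\pm, \St(\Q^2;\C)) = \dim_\C S_2(\Gamma_0(N)) = g_0(N)$. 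There is no genuine obstacle in this corollary: all of the substance lives in the preceding lemma (and, through it, in Lefschetz duality, Theorem~\ref{BS}, and the Eichler--Shimura isomorphism of Lemma~\ref{ES}), so the corollary is simply the specialization to $\Gamma_0(N)$ and $\Gamma_0(N)^\pm$.
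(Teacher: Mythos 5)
Your proposal is correct and matches the paper's (implicit) argument exactly: the corollary is a direct specialization of parts \ref{it:cusp-a} and \ref{it:cusp-b} of the preceding lemma to $\Gamma_0(N)$ and $\Gamma_0(N)^\pm$, together with the identity $\dim_\C S_2(\Gamma_0(N)) = g_0(N)$ recorded just before the corollary. Your verification that $J \in \Gamma_0(N)^\pm$ and $\Gamma_0(N)^\pm \cap \SL_2(\Z) = \Gamma_0(N)$ is exactly the routine check needed.
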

\noindent These are the dimensions we observe in our computations.

Now fix a quadratic field $E$ and  $\Gamma$ a congruence subgroup of $\SL_2(\Z)$.  
 B.~Gross and A.~Venkatesh have independently suggested to us that it might be possible to prove the
surjectivity of $\psi_{\Gamma,E}$ over $\C$ (at least in some cases) unconditionally as follows.  

\begin{definition}  \label{toral1}
Let $\gamma$ be a $\beta$-unital element in 
$\Gamma\subset\GL_2(\Z)$ and $E=\Q(\beta)$.  Then we will say that $\gamma$ is an $E$-unital element.

The image
of $\psi_{\Gamma,E}$
 is spanned by the set of modular symbols $[v,\gamma v]_\Gamma$, where 
 $\gamma$ runs through all the $E$-unital elements in $\Gamma$.

Let $f\in S_2(\Gamma)$ be an eigenform for the Hecke algebra.  Let $\tau$ be a point in the upper halfplane.
We call the number
\[
\int_\tau^{\gamma \tau} f(z) \,dz
\]
an $E$-toral period (or just a toral period if $E$ is understood.)

We call the fundamental class in $H_1(X(\Gamma), \C)$ of the closed curve which is the projection to $X(\Gamma)$ of the geodesic from 
$\tau$ to $\gamma \tau$
an $E$-toral cycle (or just a toral cycle if $E$ is understood.)
\end{definition}

For simplicity, consider the case of toral periods and cycles 
for $\Gamma=\Gamma_0(N)$.
If $[v,\gamma v]_\Gamma\in 
H_0^{\cusp}(\Gamma_0(N), \St(\Q^2;\C))$ corresponds as above to the fundamental class $\xi$, then
$\psi_\Gamma([v,\gamma v]_\Gamma)$ is determined by the values of 
\[ 
\pair{\xi,\pi}=\int_\pi \omega_{\xi} =
\int_\xi \omega_{\pi},
\]
as $\pi$ varies among closed paths on $X_0(N)$.  

Suppose 
$\psi_{\Gamma,E}$ is not surjective.  It is easy to see that the image of $\psi_{\Gamma,E}$ is Hecke-equivariant.  It follows that there is a cuspidal eigenform $f$ of level $N$ and weight 2 for the Hecke algebra such that 
\[
\int_{\xi} f(z) \, dz = 0
\]
for any 
 $\xi$ which is the fundamental class of the projection to $X(\Gamma)$ of the geodesic $\ell$ from $z_v$ to $z_w$, where $\gamma z_v = z_w$, and $\gamma$ is an $E$-unital element in 
 $\Gamma$.  
 Consider the geodesic $\mu$ between the two fixed points of 
$\gamma$ in $\R$ and fix an interior point $\tau$ in $\mu$.  
We may choose a $\Gamma$-equivariant homotopy in $\bar Y$ from $\ell$  to the arc from $\tau$ to $\gamma \tau$ in $\mu$.  Let $\alpha$ be the closed curve on $Y(\Gamma)$ which is the image of this arc.  Then the class of $\alpha$ is an $E$-toral cycle.
Because $f(z)\, dz$ is a closed form, 
\[
\int_{\xi} f(z) \, dz = \int_{\alpha} f(z) \, dz,
\]
which is an $E$- toral period.  These periods were studied by Waldspurger in \cite{W} and by various other authors.

The formulas of Waldspurger \cite{W} in principle may 
be made explicit so that, given certain characters $\chi$, 
we can write $L(f,\chi,1/2)$ as a linear combination of some of these toral periods.  If one has a theorem that shows that the $L$-value is nonvanishing for some relevant $\chi$, it follows that at least one of the periods is nonzero.  This contradiction would establish the surjectivity of
 $\psi_{\Gamma,E}$ without having to assume GRH.

Successfully implementing this idea does not look easy.  The only case we know where Waldspurger's formula has been made sufficiently explicit so that the preceding program might be carried out, is Theorem 6.3.1 in \cite{P}.  But even in this case, (where we must assume that $N$ is squarefree and coprime to the discriminant of $E$,  all the primes dividing $N$ split in $E$ and  $\chi$ is a character of the narrow class group of $E$)
 we do not know of any sufficiently strong non-vanishing theorem for the central value of the $L$-function.  
  
However, we can use the preceding ideas to prove the following theorem, which we have not seen in the literature.  
  \begin{theorem}\label{toralgen}
  Assume GRH.  Let $\Gamma$ be a congruence subgroup of $\SL_2(\Z)$.
  Then,  for any real quadratic field $E$, the $E$-toral cycles generate $H_1(X(\Gamma),\C)$.
  \end{theorem}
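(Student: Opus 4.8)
The plan is to combine the surjectivity of $\psi_{\Gamma,E}$ over $\C$, a consequence of Theorem~\ref{maingeneral}, with the geometric translation of cuspidal Steinberg homology into the homology of the compact modular curve developed earlier in this section.

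Since $\Gamma$ is a congruence subgroup of $\SL_2(\Z)$, it contains some $\Gamma(M)$ with finite index, so Theorem~\ref{maingeneral} applies with $R=\C$: the quotient of $H_0^{\cusp}(\Gamma,\St(\Q^2;\C))$ by the image of $\psi_{\Gamma,E}$ is a finitely generated torsion $\C$-module, hence $0$. Thus $\psi_{\Gamma,E}$ maps onto $H_0^{\cusp}(\Gamma,\St(\Q^2;\C))$. By Corollary~\ref{cor:im_psi}, together with Theorem~\ref{thmcusp} (which makes the choice of base point irrelevant), the image of $\psi_{\Gamma,E}$ is the $\C$-span of the modular symbols $[v,\gamma v]_\Gamma$ as $\gamma$ ranges over the $E$-unital elements of $\Gamma$; hence those symbols span $H_0^{\cusp}(\Gamma,\St(\Q^2;\C))$.

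Next I would produce a surjective linear map
\[
\nu\colon H_0^{\cusp}(\Gamma,\St(\Q^2;\C))\longrightarrow H_1(X(\Gamma),\C)
\]
sending $[v,\gamma v]_\Gamma$ to the $E$-toral cycle attached to $\gamma$, for every $E$-unital $\gamma\in\Gamma$. Via the isomorphism $b$ of Lemma~\ref{b}, and the identification of cuspidal homology with interior relative cycles made above in the comparison with cusp forms (there $b$ is shown to intertwine the divisor boundary map with the connecting homomorphism of the pair $(Y(\Gamma),\partial Y(\Gamma))$), exactness shows that $b$ carries $H_0^{\cusp}(\Gamma,\St(\Q^2;\C))$ isomorphically onto the image of $H_1(Y(\Gamma),\C)\to H_1(Y(\Gamma),\partial Y(\Gamma),\C)$, whose kernel is the span of the classes of the boundary circles of $\partial Y(\Gamma)$. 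Collapsing $\partial Y(\Gamma)$ — equivalently, including the open modular curve into $X(\Gamma)$ — annihilates exactly those boundary classes and is surjective on $H_1$, so it descends to a well-defined surjection $\nu$ of the required type; a dimension count using $\dim_\C H_0^{\cusp}(\Gamma,\St(\Q^2;\C))=2\dim_\C S_2(\Gamma)$ shows $\nu$ is in fact an isomorphism. To compute $\nu$ on a generator, lift $b([v,\gamma v]_\Gamma)$ to the class of the closed loop in $Y(\Gamma)$ obtained by projecting the geodesic segment from $z_v$ to $\gamma z_v$ — a loop because $v$ and $\gamma v$ are $\Gamma$-equivalent cusps — and apply the $\Gamma$-equivariant homotopy in $\bar Y$ from this segment to the arc of the geodesic axis of $\gamma$ running from $\tau$ to $\gamma\tau$, the construction recalled in the discussion preceding the theorem; this identifies the image of the loop in $H_1(X(\Gamma),\C)$ with the $E$-toral cycle of $\gamma$.

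Combining the two parts, $H_1(X(\Gamma),\C)=\nu\bigl(H_0^{\cusp}(\Gamma,\St(\Q^2;\C))\bigr)$ is the span of the classes $\nu([v,\gamma v]_\Gamma)$, that is, of the $E$-toral cycles, which is the assertion. The substantive input — surjectivity of $\psi_{\Gamma,E}$ over $\C$, where GRH enters through Theorem~\ref{maingeneral} — is already available, so the remaining work is the topological bookkeeping of the third paragraph: one must check that the homotopy from the modular-symbol geodesic to the toral geodesic can be taken $\Gamma$-equivariant on $\bar Y$, so that it descends to a free homotopy of closed curves on $X(\Gamma)$, and that $\nu$ does not depend on the auxiliary choices (the base points $z_v$ and the representative pair $(v,\gamma)$). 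I expect this to be the only delicate point; the conceptual content is routine once the dictionary among the homology theories in play is set up.
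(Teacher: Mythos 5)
Your proposal is correct and follows essentially the same route as the paper: GRH enters only through Theorem~\ref{maingeneral} (a torsion $\C$-module is zero, so $\psi_{\Gamma,E}$ surjects onto $H_0^{\cusp}$), and the geometric translation is the isomorphism $b$ of Lemma~\ref{b} carrying the cuspidal part onto $H_1(X(\Gamma),\C)$ inside $H_1(Y(\Gamma),\partial Y(\Gamma),\C)$, with the class of $[v,\gamma v]_\Gamma$ homotoped to the toral geodesic of $\gamma$. The paper's proof is a terser version of exactly your argument, leaving the topological bookkeeping of your third paragraph implicit.
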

  
   \begin{proof}
Let $\gamma$ be an $E$-unital element of $\Gamma$.
The isomorphism 
\[b \colon H_0(\Gamma,\St(\Q^2;\C)) \to
H_1(Y(\Gamma), \partial Y(\Gamma),\C)\] from Definition~\ref{toral0}
sends $[e,\gamma e]_\Gamma$ to the fundamental class of the image modulo 
$\Gamma$ of the geodesic from $\hat z_e$ to $\hat z_{\gamma e}$, which is homologous to an $E$-toral cycle corresponding to $\gamma$.  By Theorem~\ref{maingeneral}, under GRH, we know that $\psi_{\Gamma,E}$ maps surjectively onto the cuspidal Steinberg homology, which is mapped by $b$ onto $H_1(X(\Gamma),\C)\subset H_1(Y(\Gamma), \partial Y(\Gamma),\C)$.  
\end{proof}

 For more information about toral periods, see \cite{shintani, osullivan-taylor} and the articles referenced there.

\section{Method of computation} \label{what}

We made the computations using already-existing programs that find the Voronoi homology of arithmetic subgroups of $\GL_n(F)$ for arbitrary number fields $F$.  For this reason, we have to show carefully that when $n=2$ and $F=\Q$ that this Voronoi homology is exactly isomorphic to $H_0^{\cusp}(\Gamma, \St(\Q^2;R))$.
We also need to make this isomorphism explicit, so that we know how to express the image
of $\psi_\Gamma$ in terms of the Voronoi homology.

We do these things in this section and also provide a numerical example of the computation of  the cuspidal Steinberg homology $H_0^{\cusp}(\Gamma, \St(\Q^2;R))$.
For general background on the Voronoi decomposition and Voronoi homology see \cite{voronoi1, PerfFormModGrp}.

Let $\Gammabar = \GL_2(\Z)$.  We will describe the computational method for the groups 
$\Gamma_0(N)^\pm$.  The method for $\Gamma_0(N)$ is similar.

Let $V$ be the $3$-dimensional vector space of real symmetric $2
\times 2$ matrices.  Let $C \subset V$ denote the open cone of
positive definite matrices, $\overline C$ its closure in $V$.  Let $q \colon \Z^2 \to V$ be the map
$q(v) = v v^t$.  (All vectors in this section are column vectors.)  Let $X =
C/\R_+$ and $\overline X=\overline C/\R_+$,
where $\R_+$ acts on $C$ by scaling.  The group
$\Gammabar$ acts on $V$: for $\gamma \in \Gammabar$ and $A \in V$,
$\gamma \cdot A = \gamma A \gamma^t$.  This action restricts
to an action on $\overline C$, which descends to an action on $\overline X$.

There is one $\Gammabar$-orbit of perfect forms.  As a representative,
take the one with minimal vectors given by $\pm$ the columns of 
\[
\begin{bmatrix}
1 & 0 & 1\\
0 & 1 & 1
\end{bmatrix}.
\]  
The image under $q$ of these minimal vectors
are the vertices of an ``ideal'' triangle in $X$.  
This is the ideal $\{\infty, 0,
1\}$-triangle in the complex upper half-plane $\hp$ under the usual identification of $X$ with $\hp$.  It is ideal in
the sense that 
the vertices are not in $X$, but in $\overline{X}$.    The
$\Gammabar$-orbit of this triangle covers $X$, yielding an
tessellation of $X$ by ideal triangles minus their vertices. Let $X^*$
denote $X$ union the vertices, so that we have an honest tessellation
of $X^*$.  The triangles are $2$-cells.  $X^*$ corresponds to $\hp \cup
\PP^1(\Q)$.  Each triangle has finite 
stabilizer in $\Gammabar$.

There is one equivalence class of edge in this tessellation.  As
representative, we can take the edge between $q(e)$ and $q(f)$.
 In $\hp$, this is 
the geodesic from $\infty$ to $0$.
Each edge has finite stabilizer in $\Gammabar$.  The edges are
$1$-cells. 

There is one equivalence class of vertex in this tessellation.  As
representative, we can take $q(e)$.  
This is the point $\infty$ in 
$\PP^1(\Q)$. The vertices are $0$-cells.
This is the point $\infty$ in 
$\PP^1(\Q)$.  The stabilizer of each vertex in $\Gammabar$ 
is infinite.  

Fix vectors $v_1 = e_1$, $v_2 = e_2$, $v_3 = e_1 + e_2$.  Denote the
representative cell by the vectors defining the cell, using the
subscript.  For example, $\{1,2,3\}$ is the representative triangle, $\{1,2\}$ is the
representative edge, and $\{1\}$ is the representative vertex.

The stabilizer of $\{1,2,3\}$ is 
\begin{multline*}
\Gammabar_2:= \Gammabar_{\{1,2,3\}} = \left\{
\mat{ -1& 1\\ -1& 0 },
\mat{ 1& 0\\ 0& 1 },
\mat{ 0& 1\\ 1& 0 },
\mat{ 1& -1\\ 0& -1 },
\mat{ -1& 1\\ 0& 1 },
\mat{ 0& 1\\ -1& 1 },
\mat{ 1& -1\\ 1& 0 },\right.\\
\left.\mat{ -1& 0\\ 0& -1 },
\mat{ 0& -1\\ -1& 0 },
\mat{ 0& -1\\ 1& -1 },
\mat{ -1& 0\\ -1& 1 },
\mat{ 1& 0\\ 1& -1 }
\right\}.
\end{multline*}
The orientation preserving stabilizer of $\{1,2,3\}$ is 
\[
\Gammabar_2^+:=
\Gammabar^+_{\{1,2,3\}} = \set*{
\mat{ -1& 1\\ -1& 0 },
\mat{ 1& 0\\ 0& 1 },
\mat{ 0& 1\\ -1& 1 },
\mat{ 1& -1\\ 1& 0 },
\mat{ -1& 0\\ 0& -1 },
\mat{ 0& -1\\ 1& -1 }
}.
\]

The stabilizer of $\{1,2\}$ is 
\begin{multline*}
\Gammabar_1:=
\Gammabar_{\{1,2\}} = \left\{
\mat{ -1& 0\\ 0& -1 },
\mat{ 0& -1\\ 1& 0 },
\mat{ 1& 0\\ 0& -1 },
\mat{ 0& -1\\ -1& 0 },
\mat{ 0& 1\\ 1& 0 },\right.\\
\mat{ 1& 0\\ 0& 1 },
\left. \mat{ -1& 0\\ 0& 1 },
\mat{ 0& 1\\ -1& 0 }
\right\}.\end{multline*}
The orientation preserving stabilizer of $\{1,2\}$ is 
\[
\Gammabar_1^+:=
\Gammabar^+_{\{1,2\}} = \set*{
\mat{ -1& 0\\ 0& -1 },
\mat{ 1& 0\\ 0& -1 },
\mat{ 1& 0\\ 0& 1 },
\mat{ -1& 0\\ 0& 1 }
}.\]

The stabilizer of $\{1\}$ is
\[
\Gammabar_0:=
\Gammabar_{\{1\}} = \set*{
\mat{ \pm1& *\\ 0& \pm1 }
}.\]
Every element of $\Gammabar_0$ is deemed to preserve orientation.

Let $\Gamma$ be a subgroup of
finite index in $\Gammabar$.  
For $i = 0, 1, 2$, let $\Sigma_i$ denote the $i$-cells in the
tessellation of $X^*$.  Then $\Gamma$ acts on $\Sigma_i$, and we
compute representatives of the $\Gamma$-orbits as follows.

For example, $\Sigma_2$ is the set of $\Gammabar$ translates of the
triangle $\{1,2,3\}$, and by definition
$\Gammabar_2$ stabilizes the cell.  Thus the
$\Gamma$-orbits of $\Sigma_2$ are parametrized by $\Gamma \backslash
\Gammabar / \Gammabar_2$.
Similarly, the
$\Gamma$-orbits of $\Sigma_1$ are parametrized by $\Gamma \backslash
\Gammabar / \Gammabar_1$
and the
$\Gamma$-orbits of $\Sigma_0$ are parametrized by $\Gamma \backslash
\Gammabar / \Gammabar_0$.

Now let $\Gamma=\Gamma_0(N)^\pm$ and $i=0,1,2$.
Since $\Gammabar$ acts transitively on $\PP^1(\Z/N\Z)$ from the
right, and the stabilizer of $[0:1]$ is $\Gamma$,  it follows that 
$\Gamma \backslash \Gammabar$ can be identified with
$\PP^1(\Z/N\Z)$, so $\Gamma \backslash
\Gammabar / \Gammabar_i$ is the set of
$\Gammabar_i$-orbits on $\PP^1(\Z/N\Z)$.
In other words, each $\Gammabar_i$-orbit in
$\PP^1(\Z/N\Z)$ can be identified
with a $\Gamma$-orbit in $\Sigma_i$.  There are finitely many such
orbits.  For each one, we fix an element $a$ in $\PP^1(\Z/N\Z)$ to
represent that orbit.   

If  one assumes that multiplication by 2 is an injective map $R\to R$ and
if for $i=0,1$ and $a$ in $\PP^1(\Z/N\Z)$
there exists an orientation reversing
element $\gamma \in \Gammabar_i - \Gammabar^+_i$ such that $a \cdot \gamma = a$, then the corresponding cell is
non-orientable in $X^*/\Gamma$, and we will see from the proof of 
Theorem~\ref{whatwhat} that we can (and must) remove it from the chain complex in order to compute the cuspidal Steinberg homology.  

Again let $\Gamma$ be an arbitrary subgroup of finite index in $\Gammabar$.  For $i=0,1,2$, 
let $\Sigma_i(\Gamma)$ denote the set
of $\Gamma$-orbits of vertices, edges and triangles respectively.  

For any subgroup $\Delta$ of $\Gammabar$,  and a homomorphism $\chi:\Delta\to R^\times$, 
Let $I(\Gammabar,\Delta,\chi)$ denote the induced left $\Gammabar$-module consisting of functions $f:\Gammabar\to R$ such that $f(gd)=\chi(d)f(g)$ for every $g\in\Gammabar$ and $d\in\Delta$. The action of $\Gammabar$ 
is given by $(xf)(g)=f(x\inv g)$.

For $i=0,1,2$ let $\chi_i:\Gammabar_i\to R^\times$ be the orientation character:  
$\chi_i(x)=1$ if $x$ preserves the orientation of the basic cell 
${\{1\}},{\{1,2\}},{\{1,2,3\}}$ respectively, and $\chi_i(x)=-1$ if $x$ reverses the orientation. In particular, $\chi_0$ is the identity character.

For $i=0,1,2$, set $\I_i(\Gamma)=I(\Gammabar, \Gammabar_i,\chi_i)$.  We have distinguished elements of these induced modules, namely $F_i$, which is defined to equal $\chi_i$ on $\Gammabar_i$ and 0 otherwise.

The boundary maps that send an oriented triangle to the sum of its oriented edges and an edge to the difference of its vertices induce $R$-module maps
$\partial_2:\I_2\to \I_1$ and $\partial_1:\I_1\to  \I_0$.  We define these maps carefully as follows.

First, the boundary of the triangle $(123)$ is $(12)+(23)+(31)$.  This triangle corresponds in the induced representation to the function $F_2$.
Define the matrices
\[
U=\begin{bmatrix}
0&1\\
-1&1
\end{bmatrix},\ \ 
V=\begin{bmatrix}
1&-1\\
1&0
\end{bmatrix}.
\]
Then $\partial_2(F_2)=(1+U+V)F_1$.  Since the boundary map is $\Gammabar$-equivariant, for any $g\in\Gammabar$, $\partial_2(gF_2)=g(1+U+V)F_1$.

Second, the boundary of the  edge $(12)$ is $(2)-(1)$.  This edge corresponds in the induced representation to the function $F_1$.
Define the matrix
\[
S=\begin{bmatrix}
0&1\\
-1&0
\end{bmatrix}.
\]
Then $\partial_1(F_1)=(S-1)F_0$.  Since the boundary map is $\Gammabar$-equivariant, for any $g\in\Gammabar$, $\partial_1(gF_1)=g(S-1)F_0$.

Let $\Gamma$ be a subgroup of finite index in $\Gammabar$ and let $A^\Gamma$ denote the submodule of $\Gamma$-invariants in any $\Gammabar$-module $A$.
For $i=0,1,2$ set  $I_i(\Gamma)=\I_i^\Gamma$.

The boundary maps defined above are $\Gammabar$-equivariant, and therefore we get a complex:
\[
\xymatrix{
I_2(\Gamma)\ar[r]^{\bar\partial_2}&I_1(\Gamma)\ar[r]^{\bar\partial_1}&I_0(\Gamma).
}
\]

What we compute by computer is the homology of this complex, also called ``the cuspidal Voronoi homology.''  We will show that it is naturally isomorphic to the cuspidal Steinberg homology of $\Gamma$.  In the statement and proof of this theorem we will use the notation $[g]$ for a modular symbol, where $g\in M_2(\Q)$ with neither column the 0-vector, and where $[g]$ stands for the modular symbol $[ge,gf]$ (where as usual $e=(1:0), f=(0:1)\in \PP^2(\Q)$).
 
\begin{theorem}\label{whatwhat}
Let $R$ be an integral domain.
Let $\Gamma$ be a subgroup of finite index in $\Gammabar$ and define the modules and maps $\bar\partial_2 \colon I_2(\Gamma)\to I_1(\Gamma)$ 
and $\bar\partial_1 \colon I_1(\Gamma)\to I_0(\Gamma)$ as in the preceding discussion.
Then there is a natural isomorphism 
\[\Ker(\bar\partial_1)/\image \bar\partial_2 \to H_0^{\cusp}(\Gamma,\St(\Q^2;R)).\]
This isomorphism takes the class of $F\in \Ker(\bar\partial_1)$
 to $\sum_{g\in\cRR_1}F(g)[g]_\Gamma$, where $\cRR_1$ denotes a set of representatives of the double cosets 
 $\Gamma\backslash \Gammabar/ \Gammabar_1$.
\end{theorem}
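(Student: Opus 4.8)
The plan is to identify the complex $\bigl(I_\bullet(\Gamma),\bar\partial_\bullet\bigr)$ with the complex implicit in Bykovskii's presentation of $\St(\Q^2;R)$ (Theorem~\ref{by}), and then to recognise $\Ker\bar\partial_1/\image\bar\partial_2$ as the kernel of the divisor map that defines $H_0^{\cusp}$.

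First I would set up the dictionary between cells and modular symbols. Regarding a matrix $g\in\Gammabar$ as the ordered $\Z$-basis consisting of its columns, a direct check on the eight listed elements of $\Gammabar_1$ shows $[g\delta]_\Gamma=\chi_1(\delta)\,[g]_\Gamma$ for all $\delta\in\Gammabar_1$, using only Theorem~\ref{thm:mod-props} and the identity $-v=v$ in $\PP^1(\Q)$. Combined with the left $\Gamma$-invariance of $[\,\cdot\,]_\Gamma$, this makes
\[
\Phi\colon I_1(\Gamma)\longrightarrow H_0(\Gamma,\St(\Q^2;R)),\qquad F\longmapsto\sum_{g\in\cRR_1}F(g)\,[g]_\Gamma,
\]
well defined and independent of the choices of $\cRR_1$ and of the representatives. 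The same bookkeeping gives $I_0(\Gamma)\cong\DD_\Gamma$ by $F\mapsto\sum_{g\in\cRR_0}F(g)(ge)$ — no orientation issue arises for $0$-cells, $\chi_0$ being trivial — and, since $S\cdot e=f$, the map $\partial_1(gF_1)=g(S-1)F_0$ sends the class of $gF_1$ to $(gf)-(ge)=\partial[g]$, so $\bar\partial_1$ is carried to the divisor map $\partial\colon H_0(\Gamma,\St(\Q^2;R))\to\DD_\Gamma$ of Section~\ref{seccusp}.

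Next I would check the compatibilities that pin $\Phi$ down. The relation $\partial_2(F_2)=(1+U+V)F_1$ translates under $\Phi$ into Bykovskii's relation~\ref{it:by-b} for the basis given by the columns of $g$, namely $[g]_\Gamma+[gU]_\Gamma+[gV]_\Gamma=0$; hence $\Phi\circ\bar\partial_2=0$ and $\Phi$ descends to $\bar\Phi\colon I_1(\Gamma)/\image\bar\partial_2\to H_0(\Gamma,\St(\Q^2;R))$, which, because $\bar\partial_1\leftrightarrow\partial$, carries $\Ker\bar\partial_1/\image\bar\partial_2$ into $\Ker\partial=H_0^{\cusp}(\Gamma,\St(\Q^2;R))$. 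For the reverse direction I would use Corollary~\ref{triv} and Theorem~\ref{by}: $H_0(\Gamma,\St(\Q^2;R))$ is the free $R$-module on the symbols $[g]_\Gamma$ ($g\in\Gammabar$) modulo relations~\ref{it:by-a} and~\ref{it:by-b} together with $\Gamma$-invariance. Relation~\ref{it:by-b} is exactly $\image\bar\partial_2$, while relation~\ref{it:by-a} and $\Gamma$-invariance are precisely the identifications built into $I_1(\Gamma)=\I_1^\Gamma$; matching the two presentations produces the inverse of $\bar\Phi$, and restriction to the cuspidal parts then gives the theorem with the asserted formula.

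The one point that is genuinely delicate — and where I expect the real work — is the non-orientable cells. For $R$ in which $2$ is injective, the module $\I_i^\Gamma$ simply omits a double coset $\Gamma g\Gammabar_i$ whenever $\Gamma\cap g\Gammabar_i g^{-1}$ contains an orientation-reversing element, whereas relation~\ref{it:by-a} and $\Gamma$-invariance only force $2[g]_\Gamma=0$ for such a $g$; and when $\Gamma g\Gammabar_1$ is non-orientable the reversing element interchanges the two cusps of the edge, so $\partial[g]_\Gamma=0$ as well. Thus the isomorphism asserted in the theorem — in particular surjectivity of $\bar\Phi$ on $\Ker\bar\partial_1/\image\bar\partial_2$ onto $H_0^{\cusp}$ — requires every such $[g]_\Gamma$ to lie in the $R$-span of the symbols attached to orientable cells; it suffices that $H_0(\Gamma,\St(\Q^2;R))$ have no $2$-torsion. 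When $2$ is invertible in $R$ this is immediate from Theorem~\ref{free-6}, whose torsion bound is then a power of $3$; for $R=\Z$ one must sharpen the Farrell-cohomology estimate in the proof of Theorem~\ref{free-6}, showing that $\widehat H^0(\Gamma,\Z)$ contributes no $2$-torsion to $H_0(\Gamma,\St(\Q^2;\Z))$ through the exact sequence~\eqref{eq:free-6} — equivalently, that the $2$-primary contributions in the cell-stabilizer spectral sequence are carried by the orientable cells. Settling that case carefully, rather than the long but routine orientation bookkeeping of the earlier steps, is the substance of the proof.
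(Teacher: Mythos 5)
Your overall route is the same as the paper's: both arguments translate the complex $I_\bullet(\Gamma)$ into Bykovskii's presentation of $\St(\Q^2;R)$ via $F\mapsto\sum_{g}F(g)[g]_\Gamma$ (the paper phrases this as a map $\theta$ from $I_1(\Gamma)$ to the free module on $\Gammabar$ modulo the submodules of edge\nobreakdash-symmetry and $\Gamma$\nobreakdash-invariance relations, identifies $\image\bar\partial_2$ with the submodule generated by the translates of $1+U+V$, and then quotes Corollary~\ref{triv}), and both identify $\bar\partial_1$ with the divisor map $\partial$ of Section~\ref{seccusp}. Up to and including the identification of $\image\bar\partial_2$ with relation~\ref{it:by-b} of Theorem~\ref{by}, your argument matches the paper's.

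The gap is the one you yourself isolate and then decline to close. For an integral domain with $R/2R\ne 0$, a non-orientable edge double coset contributes nothing to $I_1(\Gamma)$ (an invariant function must vanish there), while in the Bykovskii presentation relations~\ref{it:by-a} together with $\Gamma$\nobreakdash-invariance only force $2[g]_\Gamma=0$ for the corresponding generator. So surjectivity of your $\bar\Phi$ requires these classes to lie in the span of the orientable ones, and injectivity requires more, which your proposal does not mention: the triangle relations $g(1+U+V)$ attached to \emph{non-orientable} $2$\nobreakdash-cells have no preimage in $I_2(\Gamma)$, and the relations attached to orientable $2$\nobreakdash-cells may have components along non-orientable edges, so one must check that the relations not visible in $I_2(\Gamma)\to I_1(\Gamma)$ impose nothing extra on the orientable part. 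Your proposed sufficient condition for the surjectivity half --- that $H_0(\Gamma,\St(\Q^2;\Z))$ has no $2$\nobreakdash-torsion --- is left unproved (Theorem~\ref{free-6} only bounds torsion by a power of $6$), and it is not the paper's route: the paper disposes of the orientation bookkeeping directly at the level of the two presentations, without appealing to torsion-freeness of the target (admittedly writing ``we omit the details'' at exactly this point). As written, then, your proposal correctly locates the crux but leaves unproved precisely the step you label ``the substance of the proof,'' so it does not yet establish the theorem for $R=\Z$, the case the paper actually uses.
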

 
 \begin{proof}  Many of the details of this proof consist of tedious checking and will be omitted.
 For $i=0,1,2$, write $I_i$ instead of $I_i(\Gamma)$, for brevity.  Let $\cRR_i$ denote a set of representatives of the double cosets 
 $\Gamma\backslash \Gammabar/ \Gammabar_i$.
 Let $J$ denote the free $R$-module on $\Gammabar$.  
 If $g\in\Gammabar$ we write $g=(u,v)$ where $u$ and $v$ are the columns of $g$.  
 
 Let $A$ be the submodule of $J$ generated by the relations $(u,v)-(\pm u,\pm v)$ and $(u,v)+(v,u)$, where all four choices of $\pm$ are used.  Note that these relations are those generated by $\Gammabar_1$ acting on $J$ on the right,
 taking the character $\chi_1$ into account.
 Let $B$ be the submodule of $J$ generated by the relations 
 $(u,v)-(\gamma u, \gamma v)$, where $\gamma$ runs over $\Gamma$.
 
 Define an $R$-module map 
\[
 \theta:I_1\to J/(A+B)
\]
 by $\theta(F)=\sum_{g\in\cRR_1} F(g)g$ mod $(A+B)$.   The first task is to prove that 
 $\theta$ is an isomorphism of $R$-modules and does not depend on the choice of double coset representatives $\cRR_1$.   A key point here is to distinguish between oriented and unoriented objects:
 
 For $i=0,1,2$ let us say that $g\in\Gammabar$ is ``$i$-unoriented'' if and only if there exist 
$\gamma\in\Gamma$ and $\delta\in \Gammabar_1$ such that $\gamma g\delta = g$ and $\chi_i(\delta)\ne 1$.  Otherwise, say that $g$ is ``$i$-oriented.''
The property of being $i$-oriented or $i$-unoriented depends only on the double coset
$\Gamma g \Gammabar_i$, and we call the double coset $i$-oriented or $i$-unoriented accordingly.  Let $\cRR_i^*$ be a set of representatives for the $i$-oriented double cosets.

Given an $i$-orientable $g$, define $F_g^i\in I_i$  to be the function 
\[
F_g^i(\gamma g \delta) = \begin{cases}
  \chi_i(\delta) & \text{for $\gamma\in\Gamma$ and $\delta\in \Gammabar_i$,}\\   0 & \text{otherwise.}
 \end{cases}\] 
Then we check that any $F\in I_i$ is supported on the union of the orientable double cosets,
that the set of functions $\{F_g^i \ | \ g\in\cRR_i^*\}$ is a free $R$-basis of $I_i$, 
and that $\theta$ is an isomorphism.
We omit the details.
 
We now have to determine 
$\theta(\image(\bar\partial_2))$.  Let $C$ be the submodule of $J$ generated by $g(1+U+V)$ as $g$ runs over $\Gammabar$.  Our claim is that 
$\theta(\image(\bar\partial_2))$ is congruent to $C$ modulo $A+B$.  
The key point here is what we observed earlier, that $\partial_2(F_2)=(1+U+V)F_1$.
We omit the details.

By Corollary~\ref{triv},
 $J/(A+B+C)$ is naturally isomorphic to $H_0(\Gamma,\St(\Q^2;R))$ via the map $x\mapsto [x]_\Gamma$.  So we see that $\theta$ induces an isomorphism 
$\bar\theta:I_1/\image\bar\partial_2\to H_0(\Gamma,\St(\Q^2;R))$, and this verifies the explicit form of the isomorphism asserted in the theorem.

It remains to show that $\bar\theta$ takes the kernel of $\bar\partial_1$ to
$H_0^{\cusp}(\Gamma,\St(\Q^2;R))$.  
This can be done using Definition~\ref{defcusp}
and the fact (observed earlier) that $\partial_1(F_1)=(S-1)F_0$.  

\end{proof}

\begin{example}
In this example, we assume that multiplication by 2 is injective on the coefficient ring $R$.  This allows us to use a much smaller chain complex, because there are many unorientable cells.
If there exists an orientation reversing element 
$\gamma \in \Gammabar_{\{1,2,3\}} \setminus \Gammabar^+_{\{1,2,3\}}$ 
such that $a \cdot \gamma = a$, then that triangle is
non-orientable in $X^*/\Gamma$, and we remove it from the cell complex.  Similar we remove non-orientable edges from the cell complex.

Let $N = 11$ and $\Gamma=\Gamma_0(11)^\pm$.  
The finite projective space $\PP^1(\F_{11})$ has $12$ points:
$[0:1], [1:0], [1:1], \dots, [1:10]$.
We represent any given cell by a matrix, whose columns determine the vertices
of the cell.

Using the stabilizer $\Gammabar_{\{1,2,3\}}$, the points of $\PP^1(\F_{11})$ get grouped into 3 orbits:
\begin{align*}
t_1 &=\{[0:1],[1:0], [1:10]\}\\
t_2 &=\{[1:1],[1:5],[1:9]\}\\
t_3 &=\{[1:2],[1:3],[1:4],[1:6],[1:7],[1:8]\}.
\end{align*}
Only $t_3$ is orientable, so $C_2$ is 1-dimensional.
We choose as a representative $2$-cell $\sigma = \mat{0 & -1 & -1\\1 & 2 & 3}$.

Using the stabilizer $\Gammabar_{\{1,2\}}$, the points of $\PP^1(\F_{11})$ get grouped into 4
orbits:
\begin{align*}
e_1 &= \{[0:1],[1:0]\}\\
e_2 &= \{[1:1],[1:10]\}\\
e_3 &= \{[1:2],[1:5],[1:6],[1:9]\}\\
e_4 &= \{[1:3],[1:4],[1:7],[1:8]\}.
\end{align*}
We have $e_1$, $e_3$, and $e_4$ are orientable, so $C_1$ is $3$-dimensional.
We choose as representatives
\[\tau_1 = \mat{1 & 0\\0 & 1},\quad  \tau_2 = \mat{0 & -1\\1 &
  2},\quad\text{and} \quad   \tau_3 = \mat{0 & -1 \\ 1 & 3}.\]

We now compute the boundary map $\partial_2 \colon C_2 \to C_1$, 
\[\partial_2(\sigma) = \mat{-1 & -1\\2 & 3} - \mat{0 & -1\\1 & 3} + \mat{0 &-1\\1
  & 2}= 
\mat{-1 & -1\\2 & 3} - \tau_3 + \tau_2.\]
Since 
\[\mat{-1 & -1\\2 & 3} = \mat{-4 & -1\\11 & 3} \mat{0 & -1\\1 & 3} \mat{0 &
  1\\-1 & 0},\]
it follows that the cell $\mat{-1 & -1\\2 & 3}$ is $-\tau_3$.  Thus $\partial_2
(\sigma) = \tau_2 - 2\tau_3$, and so the matrix representing this boundary
operator is $[\partial_2] = \mat{0\\1\\-2}$.

Using the stabilizer $\Gamma$, the points of $\PP^1(\F_{11})$ get grouped into 2
orbits:
\begin{align*}
v_1 &= \{[0:1],[1:1],[1:2], \dots, [1:10]\}\\
v_2 &= \{[1:0]\}.
\end{align*}
It follows that $C_0$ is $2$-dimensional.  
We choose as representatives $\rho_1 = \mat{1\\0}$ and $\rho_2 =
\mat{0\\1}$.  We now compute the boundary map $\partial_1 \colon C_1
\to C_0$.  
\begin{align*}
\partial_2(\tau_1) &= \mat{0\\1} - \mat{1\\0} = \rho_2 - \rho_1,\\
\partial_2(\tau_2) &= \mat{-1\\2} - \mat{0\\1} = \mat{-1\\2} - \rho_2,\\
\partial_2(\tau_3) &= \mat{-1\\3} - \mat{0\\1} = \mat{-1\\3}- \rho_2.
\end{align*}
Since 
\[
\mat{-1\\2} = \mat{6 & -1\\11 & 2} \mat{0\\1}\quad \text{and} \quad 
\mat{-1\\3} = \mat{4 & -1\\11 & 2} \mat{0\\1},
\]
we have that $\mat{-1\\2}$ and $\mat{-1\\3}$ are both $\rho_2$ so
\[\partial_1(\tau_2) =  \partial_1(\tau_3) = 0.\]
Thus the matrix representing this boundary operator is $[\partial_1] =
\mat{-1 & 0 & 0\\ 1 & 0 & 0}$.

For the homology computation, the kernel of $\partial_1$ is generated
by $\tau_2$ and $\tau_3$, and in the quotient by the image of
$\partial_2$, we have $[\tau_2] = 2 [\tau_3]$.  Here, we use square
brackets to signify a homology class.
\end{example}

\section{Remarks on the computations and on tables of results}\label{results}
We performed computations to find the image of $\psi_{\Gamma,E}$, when $R=\Z$, for 
$\Gamma$ equal to 
$\Gamma_0(N)^\pm$ and $\Gamma_0(N)^\pm$ for $N\le1000$ and $E=\Q[\sqrt\Delta]$ with 
$\Delta\le50$.  There are \num{29610} pairs $[N, \Delta]$ for which the Steinberg homology is nontrivial.  All of the computations are done in \cite{magma}, with some processing of the data done with SageMath \cite{sagemath}.  We first computed the cuspidal Steinberg homology groups over $\Z$ as described in Section~\ref{what}.  Then 
for each pair $[N,\Delta]$, we computed the image of $\psi_{\Gamma,E}$ as follows.  

From Corollary~\ref{cor:im_psi}, the image is generated by the symbols of the form $[e, \gamma_\beta e]_\Gamma$, for $(\beta:1) \in \PP^1(E)\setminus \PP^1(\Q)$.  We generate $\beta$-unital matrices by looking at higher and higher powers of the fundamental unit $\epsilon$ of $E$, and finding $\beta$ such that $\gamma_\beta$ is in $\Gamma$.  
For each power of $\epsilon$, we find several $\beta$ before moving on to the next power of $\epsilon$.  This turned out to be more efficient than first looping through $\beta$'s of growing height and for each $\beta$ finding the smallest $k$ such that 
 $\rho_\beta(\epsilon^k)$ is in $\Gamma$.
  For each $\gamma_\beta$, the symbol $[e, \gamma_\beta e]_\Gamma$ is computed using the usual continued fractions technique for modular symbols \cite[page~14]{cremona}.

  For each pair $[N,\Delta]$, the computation generates elements in the image of 
  $\psi_{\Gamma,E}$.  These elements generate a submodule of the image of 
  $\psi_{\Gamma,E}$.  There are quick exits if we find this submodule is equal to the whole cuspidal Steinberg homology, since in that case $\psi_{\Gamma,E}$ is surjective onto it.  Otherwise, the computation runs for 100 seconds.  We know this tabulated result is a subspace of the true image of $\psi_{\Gamma,E}$.  In practice, this subspace of the image of $\psi_{\Gamma,E}$ almost always stabilized quickly, in which case we record it as our output, and have confidence that it is the true image.  In the few cases where stabilization did not occur by 100 seconds, we computed further until we were satisfied that the result had stabilized.
  
Given $N$ and $E$, it is true that if we were to compute for more and more $\beta$'s, the reported image of $\psi_{\Gamma_0(N),E}$ could get bigger.  For instance, it is consistent with our calculations that that 
$\psi_{\Gamma_0(N),E}$ is always surjective.  We do not have an effective bound on the height of $\beta$ that would ensure the correctness of our reported image.  

The following facts lend additional credibility to our calculations.  Let $R(N,E)^\pm$ denote the reported image of $\psi_{\Gamma_0(N)^\pm,E}$, and
let $R(N,E)$ denote the reported image 
of $\psi_{\Gamma_0(N),E}$.
Then
\begin{enumerate}
\item The rank of $R(N,E)^\pm$ is always the genus $g(N)$ of the compact modular curve $X_0(N)$, and the rank of $R(N,E)$ is always $2g(N)$, consistent with the results of Section~\ref{complex}.

\item $\pi(R(N,E)^\pm)$ always contains $A_E(N)$ as a subgroup of index at most $4$, 
and $\pi^*(R(N,E))$ always contains $A_E^*(N)$ as a subgroup of index at most $4$, 
consistent with the results of Section~\ref{conj}.
\end{enumerate}

See Tables~\ref{tab:ane} and \ref{tab:SL-ane} in Section~\ref{tables} for a small sample of the computational results.  
The full set of results for level $N \leq 1000$ and real quadratic fields $\Q(\sqrt{\Delta})$ for $\Delta \leq 50$ are available online 
 (\url{https://mathstats.uncg.edu/yasaki/data/}).

\section{Tables of sample data}\label{tables}

In the following tables, the meaning of the column headings is as follows:

For $\Gamma=\Gamma_0(N)^\pm$:  $U^\pm = (\Z/N\Z)^\times/\set{\pm 1}$, 
$A^\pm = A_E(N)$, and  $Q^\pm = U^\pm/A^\pm$.   We also list the free rank $r^\pm$ and torsion subgroup $T^\pm$ of $H^\cusp_0(\Gamma_0(N)^\pm,\St(\Q^2))$, the cokernel $C^\pm$ of $\psi_{\Gamma,\Q(\sqrt\Delta)}$, and the shrinkage $s^\pm =\size{Q^\pm}/\size{C^\pm}$.  The
  \emph{$\Delta^\pm$-list} is a list of squarefree $\Delta \leq 50$, such
  that $\Q(\sqrt{\Delta})$ has the given information.

For $\Gamma=\Gamma_0(N)$: $U = (\Z/N\Z)^\times$, $A = A^*_E(N)$, and $Q = U/A$.   We also list the free rank $r$ and torsion subgroup $T$ of 
$H^\cusp_0(\Gamma_0(N),\St(\Q^2))$, the cokernel $C$ of $\psi_{\Gamma,\Q(\sqrt\Delta)}$,  and shrinkage $s =\size{Q}/\size{C}$.  

The
  \emph{$\Delta$-list} is a list of squarefree $\Delta \leq 50$, such
  that $\Q(\sqrt{\Delta})$ has the given information.
For $\Delta\in \{ 2, 5, 10, 13, 17, 26, 29, 37, 41\}$, Q($\sqrt \Delta$) has a unit of norm $-1$.
For $\Delta\in \{ 3, 6, 7, 11, 14, 15, 19, 21, 22, 23, 30, 31, 33, 34, 35, 38, 39, 42, 43, 46, 47 \}$, Q($\sqrt \Delta$) does not have a unit of norm $-1$.

Each table includes:  all the data for levels $N\le 20$, 
a few examples with shrinkage equal to $4$, and the cokernels found with largest cardinality.

Note: The cokernel is not always cyclic.  We have several examples of 
$(\Gamma_0(N)^\pm,E)$ where the cokernel has $2$ cyclic factors although none with $3$ or more cyclic factors.  For $(\Gamma_0(N),E)$, we have several examples with $3$ cyclic factors.  For instance, the cokernel for $\Gamma_0(840)$ is isomorphic to $C_2\times C_2\times C_6$ when 
$\Delta=37$.

\begin{table}
  \caption{Data for $\Gamma_0^\pm(N)$  \label{tab:ane}}
          {\small
            \begin{tabularx}{\linewidth}{r ccc ccccX}
\toprule
$N$ & $U^\pm$ & $A^\pm$ & $Q^\pm$ & $C^\pm$ & $r^\pm$ & $T^\pm$ & $s^\pm$ & $\Delta^\pm$-list  \\
\midrule
7 & $C_{3}$ & $C_{3}$ & $C_{1}$ & $C_{1}$ & $0$ & $C_{3}$ & $1$ & [2, 11, 15, 23, 29, 30, 37, 39, 43, 46]\\
\hline
7 & $C_{3}$ & $C_{1}$ & $C_{3}$ & $C_{3}$ & $0$ & $C_{3}$ & $1$ & [3, 5, 6, 7, 10, 13, 14, 17, 19, 21, 22, 26, 31, 33, 34, 35, 38, 41, 42, 47]\\
\hline
11 & $C_{5}$ & $C_{5}$ & $C_{1}$ & $C_{1}$ & $1$ & $C_{1}$ & $1$ & [3, 5, 14, 15, 23, 26, 31, 34, 37, 38, 42, 47]\\
\hline
11 & $C_{5}$ & $C_{1}$ & $C_{5}$ & $C_{5}$ & $1$ & $C_{1}$ & $1$ & [2, 6, 7, 10, 11, 13, 17, 19, 21, 22, 29, 30, 33, 35, 39, 41, 43, 46]\\
\hline
13 & $C_{6}$ & $C_{6}$ & $C_{1}$ & $C_{1}$ & $0$ & $C_{3}$ & $1$ & [3, 14, 17, 22, 23, 29, 30, 38, 43]\\
13 & $C_{6}$ & $C_{3}$ & $C_{2}$ & $C_{1}$ & $0$ & $C_{3}$ & $2$ & [10, 35]\\
\hline
13 & $C_{6}$ & $C_{2}$ & $C_{3}$ & $C_{3}$ & $0$ & $C_{3}$ & $1$ & [2, 5, 13, 26, 37, 41, 42]\\
13 & $C_{6}$ & $C_{1}$ & $C_{6}$ & $C_{3}$ & $0$ & $C_{3}$ & $2$ & [6, 7, 11, 15, 19, 21, 31, 33, 34, 39, 46, 47]\\
\hline
14 & $C_{3}$ & $C_{3}$ & $C_{1}$ & $C_{1}$ & $1$ & $C_{1}$ & $1$ & [2, 11, 15, 23, 30, 37, 39, 43, 46]\\
\hline
14 & $C_{3}$ & $C_{1}$ & $C_{3}$ & $C_{3}$ & $1$ & $C_{1}$ & $1$ & [3, 5, 6, 7, 10, 13, 14, 17, 19, 21, 22, 26, 29, 31, 33, 34, 35, 38, 41, 42, 47]\\
\hline
15 & $C_{4}$ & $C_{4}$ & $C_{1}$ & $C_{1}$ & $1$ & $C_{1}$ & $1$ & [6, 10, 11, 13, 19, 21, 31, 34, 37, 39, 46]\\
\hline
15 & $C_{4}$ & $C_{2}$ & $C_{2}$ & $C_{2}$ & $1$ & $C_{1}$ & $1$ & [2, 3, 5, 15, 17, 22, 23, 26, 29, 30, 35, 41, 42, 43, 47]\\
15 & $C_{4}$ & $C_{1}$ & $C_{4}$ & $C_{2}$ & $1$ & $C_{1}$ & $2$ & [7, 14, 33, 38]\\
\hline
17 & $C_{8}$ & $C_{8}$ & $C_{1}$ & $C_{1}$ & $1$ & $C_{1}$ & $1$ & [2, 13, 15, 21, 30, 33, 35, 42]\\
\hline
17 & $C_{8}$ & $C_{4}$ & $C_{2}$ & $C_{2}$ & $1$ & $C_{1}$ & $1$ & [26, 38, 43, 47]\\
\hline
17 & $C_{8}$ & $C_{2}$ & $C_{4}$ & $C_{4}$ & $1$ & $C_{1}$ & $1$ & [5, 10, 17, 19, 29, 37, 41]\\
17 & $C_{8}$ & $C_{1}$ & $C_{8}$ & $C_{4}$ & $1$ & $C_{1}$ & $2$ & [3, 6, 7, 11, 14, 22, 23, 31, 34, 39, 46]\\
\hline
19 & $C_{9}$ & $C_{9}$ & $C_{1}$ & $C_{1}$ & $1$ & $C_{3}$ & $1$ & [5, 6, 7, 17, 23, 26, 30, 35, 39, 42, 43]\\
\hline
19 & $C_{9}$ & $C_{3}$ & $C_{3}$ & $C_{3}$ & $1$ & $C_{3}$ & $1$ & [11, 47]\\
\hline
19 & $C_{9}$ & $C_{1}$ & $C_{9}$ & $C_{9}$ & $1$ & $C_{3}$ & $1$ & [2, 3, 10, 13, 14, 15, 19, 21, 22, 29, 31, 33, 34, 37, 38, 41, 46]\\
\hline
20 & $C_{4}$ & $C_{4}$ & $C_{1}$ & $C_{1}$ & $1$ & $C_{1}$ & $1$ & [5, 6, 13, 14, 17, 21, 34, 37, 39, 41, 46]\\
\hline
20 & $C_{4}$ & $C_{2}$ & $C_{2}$ & $C_{2}$ & $1$ & $C_{1}$ & $1$ & [2, 3, 7, 10, 11, 15, 19, 22, 23, 26, 29, 30, 31, 33, 35, 38, 42, 43, 47]\\
\midrule
\midrule
65 & $C_{2} \times C_{12}$ & $C_{3}$ & $C_{2} \times C_{4}$ & $C_{2}$ & $5$ & $C_{1}$ & $4$ & [35]\\
\hline
65 & $C_{2} \times C_{12}$ & $C_{1}$ & $C_{2} \times C_{12}$ & $C_{6}$ & $5$ & $C_{1}$ & $4$ & [7, 47]\\
\hline
285 & $C_{2} \times C_{36}$ & $C_{3}$ & $C_{2} \times C_{12}$ & $C_{6}$ & $37$ & $C_{1}$ & $4$ & [7]\\
\hline
285 & $C_{2} \times C_{36}$ & $C_{1}$ & $C_{2} \times C_{36}$ & $C_{18}$ & $37$ & $C_{1}$ & $4$ & [14]\\
\midrule
\midrule
983 & $C_{491}$ & $C_{1}$ & $C_{491}$ &   $C_{491}$& $82$ & $C_{1}$ & $1$ & [5, 10, 11, 13, 15, 17, 22, 26, 29, 30, 33, 34, 35, 39]\\
\hline
991 & $C_{495}$ & $C_{1}$ & $C_{495}$ &  $C_{495}$ & $82$ & $C_{3}$ & $1$ & [3, 6, 7, 11, 14, 15, 17, 22, 23, 30, 34, 35, 37, 39, 41, 46, 47]\\
\bottomrule
\end{tabularx}}
\end{table}

\begin{table}
  \caption{Data for $\Gamma_0(N)$ } \label{tab:SL-ane}
          {\small
            \begin{tabularx}{\linewidth}{r ccc ccccX}
\toprule
$N$ & $U$ & $A$ & $Q$ & $C$ & $r$ & $T$ & $s$ & $\Delta$-list  \\
\midrule
7 & $C_{6}$ & $C_{6}$ & $C_{1}$ & $C_{1}$ & $0$ & $C_{3}$ & $1$ & [2, 11, 15, 23, 29, 30, 37, 39, 43, 46]\\
\hline
7 & $C_{6}$ & $C_{2}$ & $C_{3}$ & $C_{3}$ & $0$ & $C_{3}$ & $1$ & [3, 5, 6, 7, 10, 13, 14, 17, 19, 21, 22, 26, 31, 33, 34, 35, 38, 41, 42, 47]\\
\hline
11 & $C_{10}$ & $C_{10}$ & $C_{1}$ & $C_{1}$ & $2$ & $C_{1}$ & $1$ & [3, 5, 14, 15, 23, 26, 31, 34, 37, 38, 42, 47]\\
\hline
11 & $C_{10}$ & $C_{2}$ & $C_{5}$ & $C_{5}$ & $2$ & $C_{1}$ & $1$ & [2, 6, 7, 10, 11, 13, 17, 19, 21, 22, 29, 30, 33, 35, 39, 41, 43, 46]\\
\hline
13 & $C_{12}$ & $C_{12}$ & $C_{1}$ & $C_{1}$ & $0$ & $C_{3}$ & $1$ & [3, 14, 22, 23, 30, 38, 43]\\
13 & $C_{12}$ & $C_{6}$ & $C_{2}$ & $C_{1}$ & $0$ & $C_{3}$ & $2$ & [10, 17, 29, 35]\\
\hline
13 & $C_{12}$ & $C_{4}$ & $C_{3}$ & $C_{3}$ & $0$ & $C_{3}$ & $1$ & [42]\\
13 & $C_{12}$ & $C_{2}$ & $C_{6}$ & $C_{3}$ & $0$ & $C_{3}$ & $2$ & [2, 5, 6, 7, 11, 13, 15, 19, 21, 26, 31, 33, 34, 37, 39, 41, 46, 47]\\
\hline
14 & $C_{6}$ & $C_{6}$ & $C_{1}$ & $C_{1}$ & $2$ & $C_{1}$ & $1$ & [2, 11, 15, 23, 30, 37, 39, 43, 46]\\
\hline
14 & $C_{6}$ & $C_{2}$ & $C_{3}$ & $C_{3}$ & $2$ & $C_{1}$ & $1$ & [3, 5, 6, 7, 10, 13, 14, 17, 19, 21, 22, 26, 29, 31, 33, 34, 35, 38, 41, 42, 47]\\
\hline
15 & $C_{2} \times C_{4}$ & $C_{2} \times C_{4}$ & $C_{1}$ & $C_{1}$ & $2$ & $C_{1}$ & $1$ & [6, 11, 19, 21, 31, 34, 39, 46]\\
\hline
15 & $C_{2} \times C_{4}$ & $C_{2} \times C_{2}$ & $C_{2}$ & $C_{2}$ & $2$ & $C_{1}$ & $1$ & [2, 3, 5, 10, 13, 15, 17, 22, 23, 26, 29, 30, 35, 37, 41, 42, 43, 47]\\
\hline
15 & $C_{2} \times C_{4}$ & $C_{2}$ & $C_{4}$ & $C_{4}$ & $2$ & $C_{1}$ & $1$ & [7, 14, 33, 38]\\
\hline
17 & $C_{16}$ & $C_{16}$ & $C_{1}$ & $C_{1}$ & $2$ & $C_{1}$ & $1$ & [15, 21, 30, 33, 35, 42]\\
\hline
17 & $C_{16}$ & $C_{8}$ & $C_{2}$ & $C_{2}$ & $2$ & $C_{1}$ & $1$ & [2, 13, 38, 43, 47]\\
\hline
17 & $C_{16}$ & $C_{4}$ & $C_{4}$ & $C_{4}$ & $2$ & $C_{1}$ & $1$ & [19, 26]\\
17 & $C_{16}$ & $C_{2}$ & $C_{8}$ & $C_{4}$ & $2$ & $C_{1}$ & $2$ & [3, 5, 6, 7, 10, 11, 14, 17, 22, 23, 29, 31, 34, 37, 39, 41, 46]\\
\hline
19 & $C_{18}$ & $C_{18}$ & $C_{1}$ & $C_{1}$ & $2$ & $C_{3}$ & $1$ & [5, 6, 7, 17, 23, 26, 30, 35, 39, 42, 43]\\
\hline
19 & $C_{18}$ & $C_{6}$ & $C_{3}$ & $C_{3}$ & $2$ & $C_{3}$ & $1$ & [11, 47]\\
\hline
19 & $C_{18}$ & $C_{2}$ & $C_{9}$ & $C_{9}$ & $2$ & $C_{3}$ & $1$ & [2, 3, 10, 13, 14, 15, 19, 21, 22, 29, 31, 33, 34, 37, 38, 41, 46]\\
\hline
20 & $C_{2} \times C_{4}$ & $C_{2} \times C_{4}$ & $C_{1}$ & $C_{1}$ & $2$ & $C_{1}$ & $1$ & [6, 14, 21, 34, 39, 46]\\
\hline
20 & $C_{2} \times C_{4}$ & $C_{2} \times C_{2}$ & $C_{2}$ & $C_{2}$ & $2$ & $C_{1}$ & $1$ & [2, 3, 5, 7, 10, 11, 13, 15, 17, 19, 22, 23, 26, 29, 30, 31, 33, 35, 37, 38, 41, 42, 43, 47]\\
\midrule
\midrule
65 & $C_{4} \times C_{12}$ & $C_{6}$ & $C_{2} \times C_{4}$ & $C_{2}$ & $10$ & $C_{1}$ & $4$ & [35]\\
\hline
65 & $C_{4} \times C_{12}$ & $C_{2}$ & $C_{2} \times C_{12}$ & $C_{6}$ & $10$ & $C_{1}$ & $4$ & [2, 5, 7, 13, 17, 37, 41, 47]\\
\hline
85 & $C_{4} \times C_{16}$ & $C_{2}$ & $C_{2} \times C_{16}$ & $C_{8}$ & $14$ & $C_{1}$ & $4$ & [5, 10, 17, 37, 41]\\
\hline
130 & $C_{4} \times C_{12}$ & $C_{6}$ & $C_{2} \times C_{4}$ & $C_{2}$ & $34$ & $C_{1}$ & $4$ & [35]\\
\hline
130 & $C_{4} \times C_{12}$ & $C_{2}$ & $C_{2} \times C_{12}$ & $C_{6}$ & $34$ & $C_{1}$ & $4$ & [2, 5, 7, 13, 17, 37, 41, 47]\\
\midrule
\midrule
983 & $C_{982}$ & $C_{2}$ & $C_{491}$ & $C_{491}$ & $164$ & $C_{1}$ & $1$ & [5, 10, 11, 13, 15, 17, 22, 26, 29, 30, 33, 34, 35, 39]\\
\hline
991 & $C_{990}$ & $C_{2}$ & $C_{495}$ & $C_{495}$ & $164$ & $C_{3}$ & $1$ & [3, 6, 7, 11, 14, 15, 17, 22, 23, 30, 34, 35, 37, 39, 41, 46, 47]\\
\bottomrule
\end{tabularx}}
\end{table}

\clearpage
\bibliographystyle{amsalpha}
\bibliography{biblio}

\end{document}